\newtheorem{theorem}{Theorem}[section]
\newtheorem{lemma}[theorem]{Lemma}
\newtheorem{definition}[theorem]{Definition}
\newcommand{\R}{\mathbb{R}}
\newcommand{\N}{\mathcal{N}}
\newcommand{\T}{\mathcal{T}}
\newcommand{\LL}{\mathcal{L}^C}
\newcommand{\WD}{\mathcal{WD}}
\newcommand{\eps}{\varepsilon}
\newcommand{\defeq}{\vcentcolon=}
\newcommand{\E}{\mathrm{E}}
\newcommand{\Var}{\mathrm{Var}}
\newcommand{\SNR}{\text{SNR}}
\begin{document}

\title{Recursive Diffeomorphism-Based Regression for Shape Functions}

\author{Jieren Xu$^\dagger$, Haizhao Yang$^*$ and Ingrid Daubechies$^\dagger$\\
  \vspace{0.1in}\\
  $^\dagger$ Department of Mathematics, Duke University\\
  $^*$Department of Mathematics, National University of Singapore\\
}

\date{October, 2016; revised July 2017}
\maketitle

\begin{abstract}
  This paper proposes a recursive diffeomorphism-based regression method for the one-dimensional generalized mode decomposition problem that aims at extracting generalized modes $\alpha_k(t)s_k(2\pi N_k\phi_k(t))$ from their superposition $\sum_{k=1}^K \alpha_k(t)s_k(2\pi N_k\phi_k(t))$. We assume that the instantaneous information, e.g., $\alpha_k(t)$ and $N_k\phi_k(t)$, is determined by, e.g., a one-dimensional synchrosqueezed transform or some other methods. Our main contribution is to propose a novel approach based on diffeomorphisms and nonparametric regression to estimate wave shape functions $s_k(t)$. This leads to a framework for the generalized mode decomposition problem under a weak well-separation condition. Numerical examples of synthetic and real data are provided to demonstrate the successful application of our approach.

\end{abstract}

{\bf Keywords.} Generalized mode decomposition, generalized shape function, instantaneous, synchrosqueezed wave packet transform, diffeomorphism, recursive nonparametric regression.

{\bf AMS subject classifications: 42A99 and 65T99.}

\section{Introduction}
\label{sec:intro}

The analysis of oscillatory data is a ubiquitous challenge, arising in a wide range of applications including but not limited to medicine (like ECG and EEG readings \cite{HauBio2}), physical science (e.g., gravitational waves \cite{Chirplet1}, atomic crystal images \cite{Crystal}), mechanical engineering (such as vibration measurements \cite{Eng2}), finance, geology (e.g., seismic data analysis \cite{GeoReview,SSCT}), art investigation \cite{Canvas}, and audio signals (including speech and music recordings \cite{musicShape}). Although different problems depend on different interpretation of the data measurement, it is common that one wants to extract certain time-varying features or conduct adaptive component analysis. For this purpose, a typical model is to assume that the oscillatory data $f(t)$ consists of a superposition of several (but typically reasonably few) oscillatory modes like
\begin{equation}
\label{P1}
f(t)=\sum_{k=1}^K \alpha_k(t) e^{2\pi i N_k \phi_k(t)},
\end{equation}
where $\alpha_k(t)$ is the instantaneous amplitude, $2\pi N_k \phi_k(t)$ is the instantaneous phase and $N_k\phi_k'(t)$ is the instantaneous frequency. Analyzing instantaneous properties (e.g., instantaneous frequencies, instantaneous amplitudes and instantaneous phases) and decomposing the signal $f(t)$ into several modes $\alpha_k(t) e^{2\pi iN_k \phi_k(t)}$ have been an important topic for over two decades. Many methods, have been proposed to address this mode decomposition problem, e.g. empirical mode decomposition methods \cite{Huang1998}, time-frequency reassignment methods \cite{Auger1995,Chassande-Mottin2003}, synchrosqueezed transforms \cite{Daubechies2011}, adaptive optimization \cite{VMD,Hou2012}, recursive filtering \cite{iterativeFilter2,iterativeFilter1}, and data-driven time-frequency decomposition \cite{Chui2016,EWT}.

\begin{figure}
  \begin{center}
    \begin{tabular}{cc}
      \includegraphics[height=2.4in]{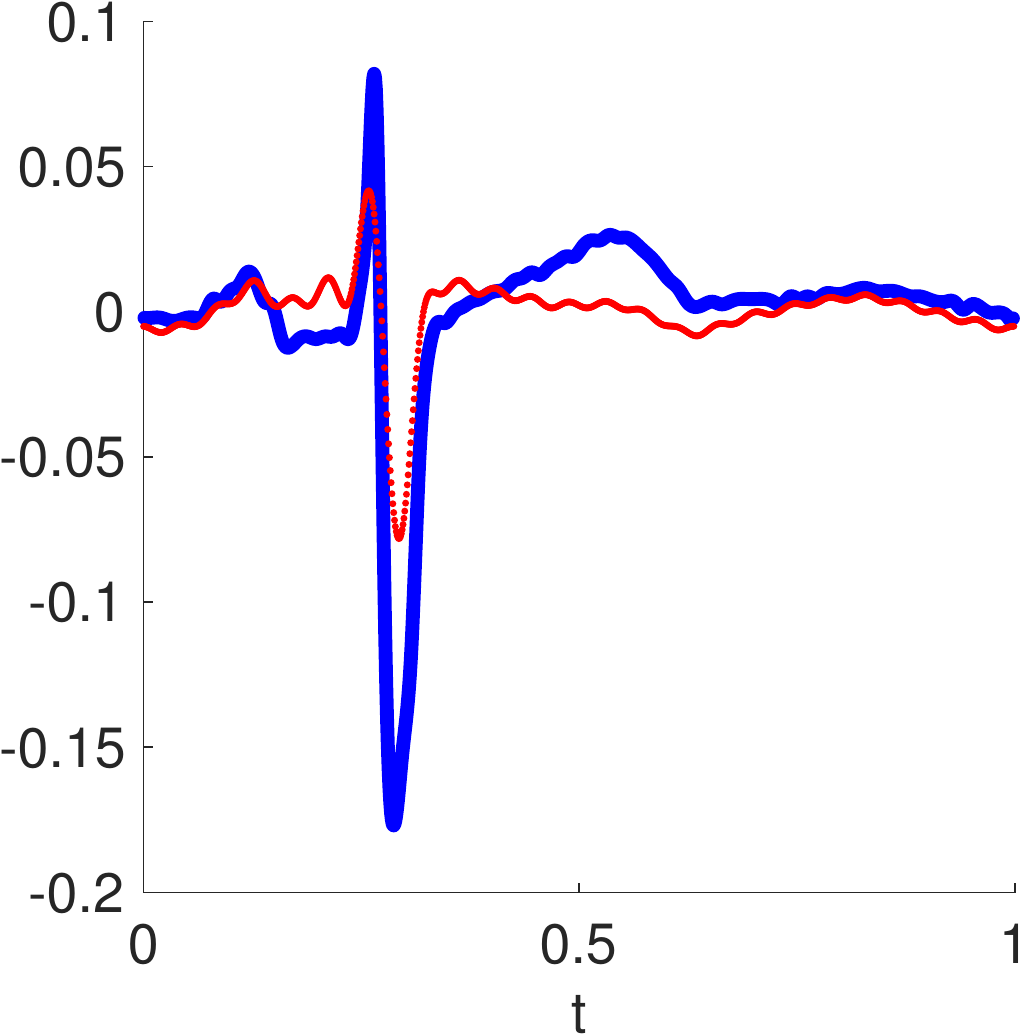} &
      \includegraphics[height=2.4in]{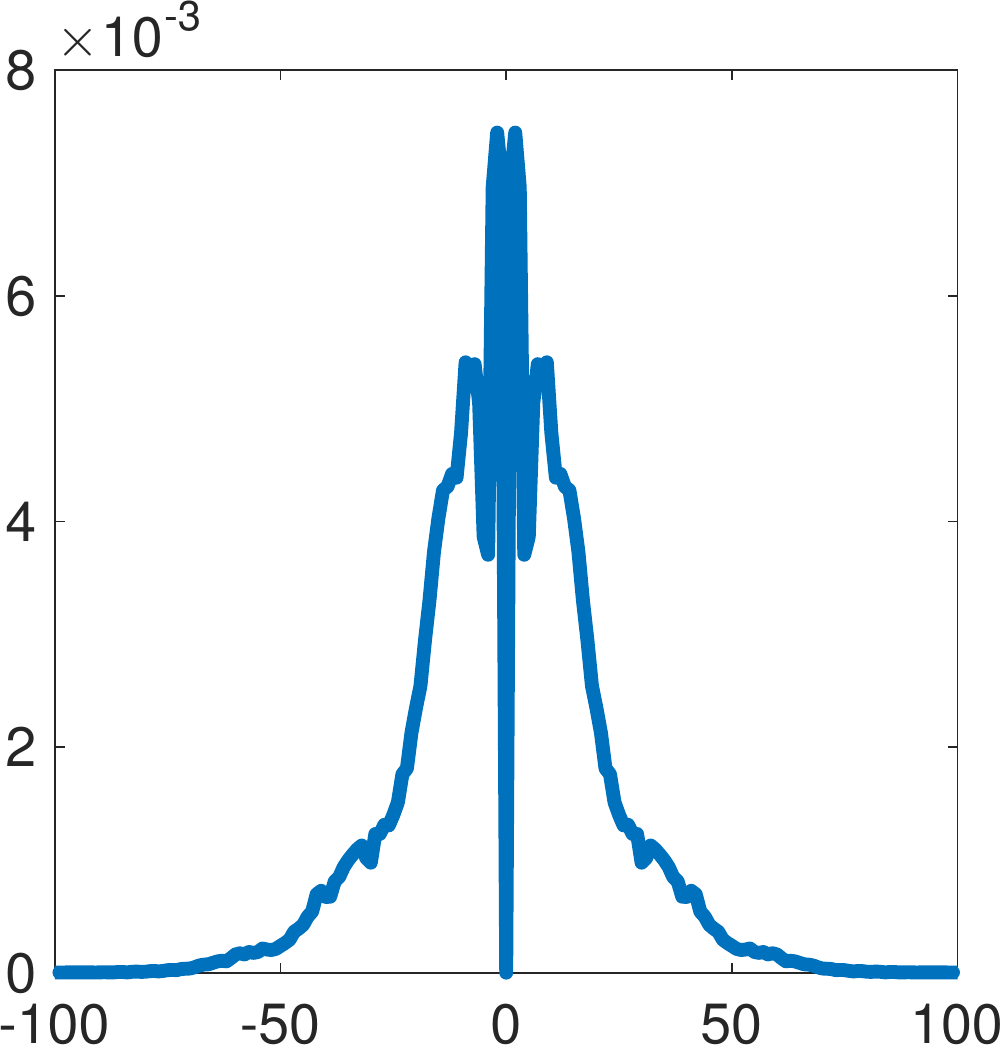}
    \end{tabular}
  \end{center}
  \caption{Left: A generalized shape function $s(t)$ of a real ECG signal (in blue) and its band-limited approximation $\sum_{|n|\leq 10} \widehat{s}(n) e^{2\pi i n t}$ (in red). Right: The Fourier power spectrum $|\widehat{s}(\xi)|$ of $s(t)$. As seen in the real ECG shape function in blue, there are three major peaks (called P, R, and T peaks from left to right, respectively) that are valuable in medical study. This figure illustrates that a high sampling rate is needed to identify these peaks accurately, and that a band-limited approximation to the shape function loses important information.}
  \label{fig:ECGshape}
\end{figure}

In spite of considerable success in modeling oscillatory data of the form \eqref{P1}, modes with sinusoidal oscillatory patterns like $\alpha_k(t) e^{2\pi iN_k \phi_k(t)}$ are not longer sufficiently adaptive to characterize complicated features in the data. This motivates the generalized mode decomposition problem of the form
\begin{equation}
\label{P2}
f(t)=\sum_{k=1}^K f_k(t) = \sum_{k=1}^K \alpha_k(t) s_k(2 \pi N_k \phi_k(t)),
\end{equation}
where $\{s_k(t)\}_{1\leq k\leq K}$ are $2\pi$-periodic generalized shape functions. 
For example, the oscillatory pattern in the electrocardiography (ECG) signal contains information of the electrical pathway inside the heart, the respiration, and the heart anatomy, which is embedded in a  generalized shape function as shown in Figure \ref{fig:ECGshape} (left). For another example, different kinds of timbre of different music instruments result from different wave shapes in music signals (see Figure \ref{fig:musicShape}). The Fourier expansion of generalized shape functions results  in
\begin{equation}
f(t)= \sum_{k=1}^K \alpha_k(t) s_k(2 \pi N_k \phi_k(t))=\sum_{k=1}^K \sum_{n=-\infty}^{\infty} \widehat{s_k}(n)\alpha_k(t) e^{2\pi i n N_k \phi_k(t)}.
\end{equation}
Hence, in another point of view, the generalized mode decomposition problem comes from the motivation that combining modes with similar oscillatory patterns of the form \eqref{P1} leads to a more adaptive and physically more meaningful decomposition of the form \eqref{P2}. When generalized shape functions are not band-limited, the generalized mode decomposition problem is challenging.

Although various methods have been proposed for the mode decomposition problem, the generalized mode decomposition problem is relatively recent and there are few solutions. Existing solutions assume that the instantaneous amplitudes $\alpha_k(t)$ and the fundamental instantaneous frequencies $N_k\phi_k'(t)$ can be estimated by the synchrosqueezed transform  \cite{1DSSWPT,ChuiLinWu2016} or the data-driven time-frequency analysis \cite{Hou2012}. With these instantaneous properties ready, the generalized shape function can be estimated by the diffeomorphism based spectral analysis (DSA) \cite{1DSSWPT}, the singular value decomposition (SVD) method \cite{Hou2016}, and the functional regression method \cite{ChuiLinWu2016} under different conditions as specified in these references. 
 This paper proposes a recursive diffeomorphism-based regression method (RDBR) as an alternative solution to the generalized mode decomposition problem. Before applying the RDBR method, the synchrosqueezed transform is applied to estimate instantaneous amplitudes $\{\alpha_k(t)\}_k$ and instantaneous frequencies $\{N_k\phi_k'(t)\}_k$. With these instantaneous properties ready, the RDBR method is able to estimate generalized shape functions $\{s_k(t)\}_k$ using a time-frequency unwarping technique (a diffeomorphism to be clarified later) and a nonparametric regression algorithm with theoretical guarantee. Numerical examples show that this novel method works in many different situations: it can identify and extract generalized modes with similar phase functions and a wide range of generalized shape functions; it can estimate generalized shape functions of a short signal with few periods, which potentially enables online computation for dynamic shapes changing in time.

\begin{figure}
  \begin{center}
    \begin{tabular}{cc}
      \includegraphics[height=1.0in]{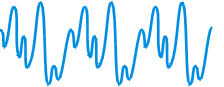} &
      \includegraphics[height=1.0in]{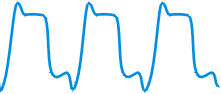}
    \end{tabular}
  \end{center}
  \caption{A few consecutive copies of generalized shape functions for the sound produced by playing a note on a musical instrument. Left: Violin. Right: Piano.}
  \label{fig:musicShape}
\end{figure}

The rest of this paper is organized as follows. In Section \ref{sec:SST}, the one-dimensional synchrosqueezed transform is briefly introduced with a simple example. In Section \ref{sec:RDBR}, the recursive diffeomorphism-based shape regression method is introduced and asymptotically analyzed.\footnote{Notations in the asymptotical analysis: we shall use the $O(\epsilon)$ notation, as well as the related notations  $\lesssim$ and $\gtrsim$; in particular, we write $F=O(\epsilon)G$ if there exists a constant $C$ (which we will not specify further) such that $|F|\leq C\epsilon |G|$; here $C$ may depend on some general parameters as detailed just before Theorem \ref{thm:main2}.} In Section \ref{sec:results}, some synthetic and real examples are provided to demonstrate the efficiency of the RDBR method. Finally, we conclude this paper in Section \ref{sec:conclusion}.

\section{Synchrosqueezed transform (SST)}
\label{sec:SST}

A powerful tool for the mode decomposition problem is the synchrosqueezed transform (SST). It consists of a linear time-frequency analysis tool and a nonlinear synchrosqueezing technique to obtain a sharpened time-frequency representation \cite{SSCT,SSSTFT,Daubechies2011,1DSSWPT,SSWPT,behera}. The SST is a reasonably robust algorithm \cite{Robustness,Daubechies20150193} with fast forward and inverse transforms based on the FFT. It has been applied to analyze oscillatory data in a wide range of real problems. Following \cite{1DSSWPT}, the one-dimensional synchrosqueezed wave packet transforms (SSWPT) is applied to estimate fundamental instantaneous properties before estimating generalized shapes. Hence, we will follow the notations in \cite{1DSSWPT} and briefly introduce the SSWPT with a concrete example 
\begin{equation}
\label{eqn:ex1}
f(t)=f_1(t)+f_2(t),
\end{equation}
where
\[
f_1(t) = \alpha_1(t)s_1(2\pi N_1\phi_1(t))= (1+0.05\sin(4\pi x))s_1\left(120\pi(x+0.01\sin(2\pi x))\right)
\] 

\[
f_2(t) = \alpha_2(t)s_2(2\pi N_2\phi_2(t))= (1+0.1\sin(2\pi x))s_2\left(180\pi(x+0.01\cos(2\pi x))\right),
\] 
$s_1(t)$ and $s_2(t)$ are generalized shape functions defined in $[0,1]$ as shown in Figure \ref{fig:2}. The SSWPT is applied to recover $\alpha_i(t)$, $i=1$, $2$, and $N_i\phi_i(t)$, $i=1$, $2$ from $f(t)$. For detailed implementation, the reader is referred to \cite{1DSSWPT}.

\begin{figure}
  \begin{center}
    \begin{tabular}{cccc}
      \includegraphics[height=1.2in]{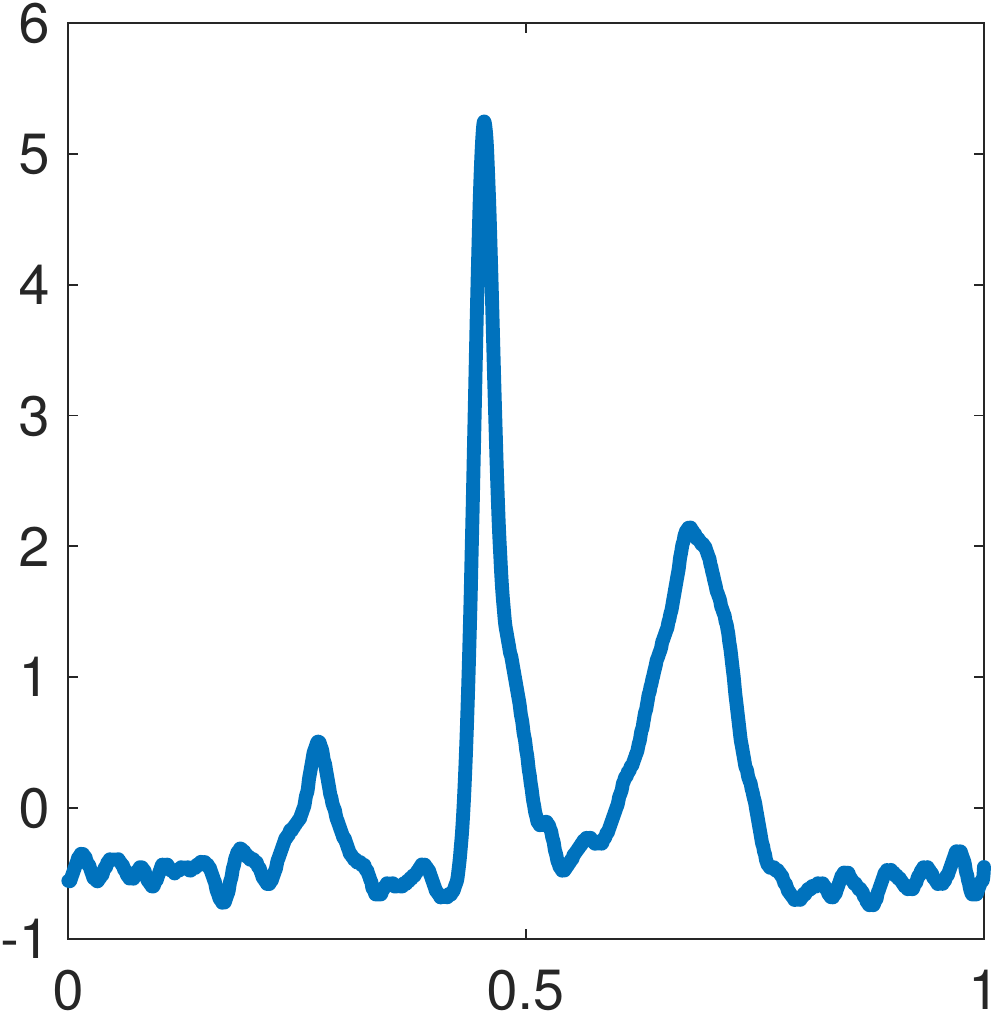} &
      \includegraphics[height=1.2in]{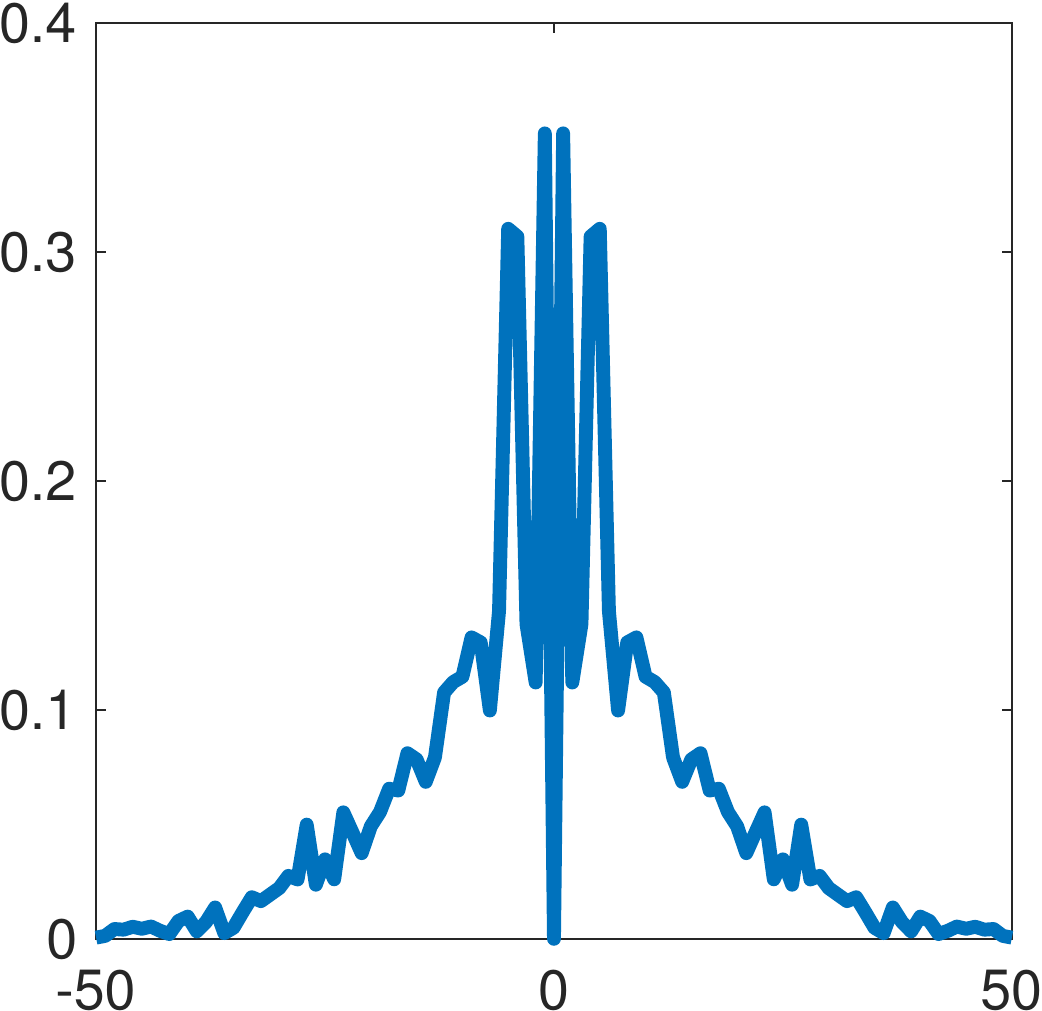} &
      \includegraphics[height=1.2in]{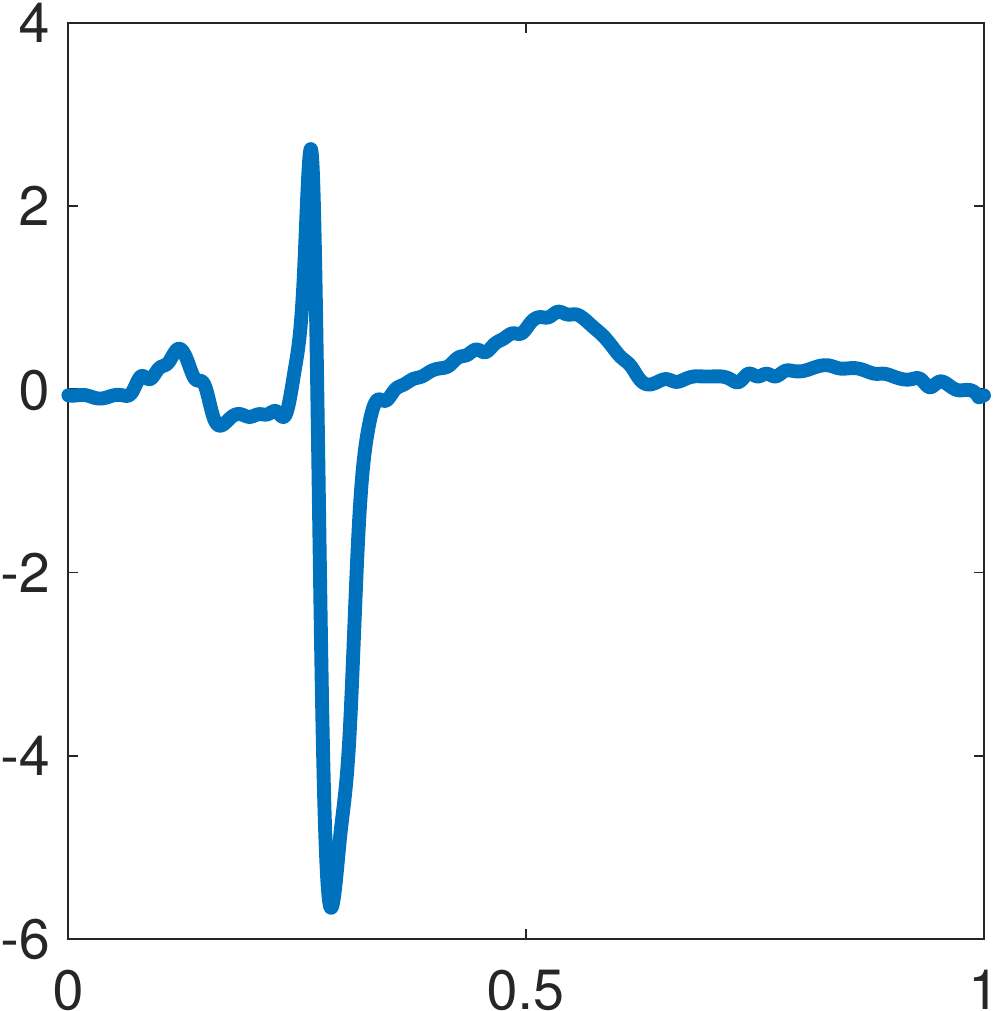} &
      \includegraphics[height=1.2in]{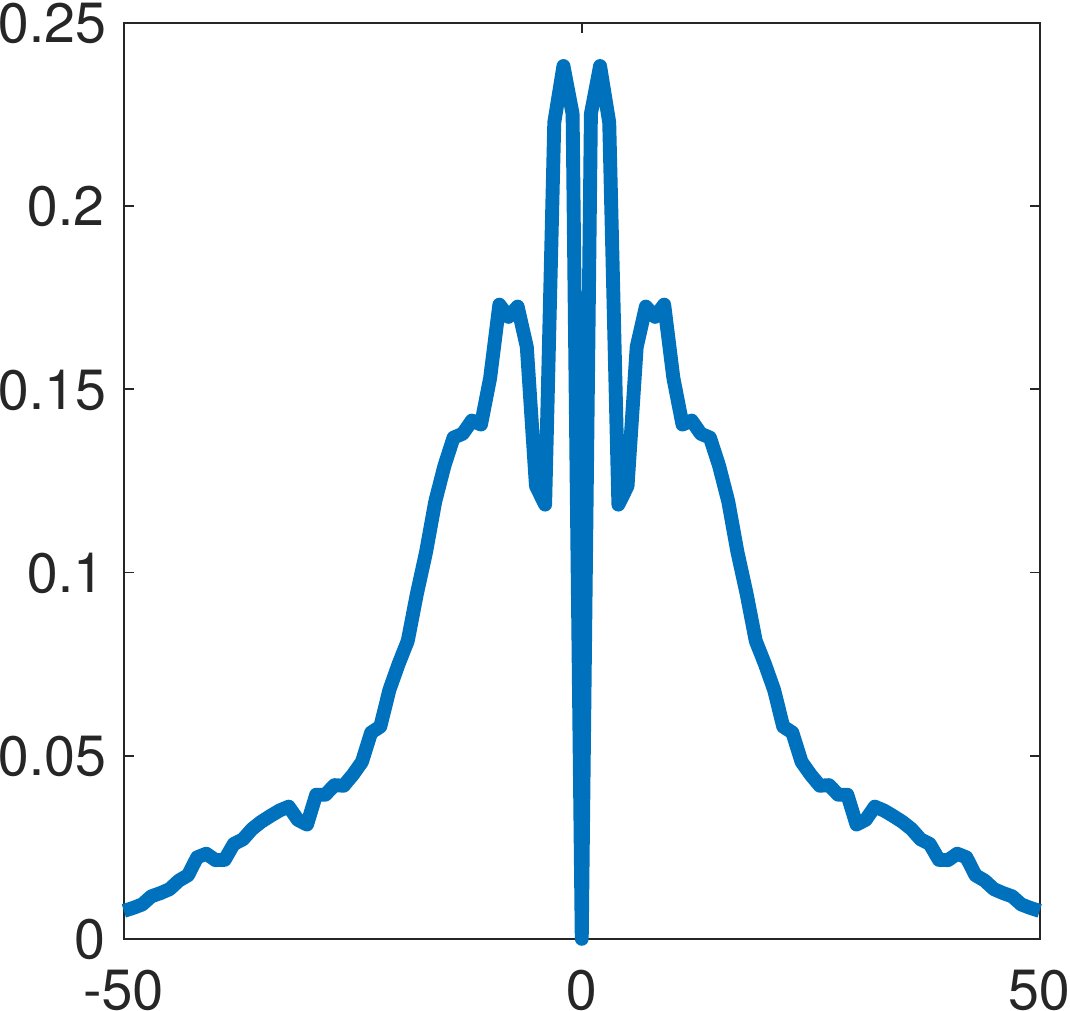}\\
      (a) & (b) & (c) & (d)
    \end{tabular}
  \end{center}
  \caption{(a) and (c): two generalized shape functions $s_1(2\pi t)$ and $s_2(2\pi t)$ of real ECG signals; (b) and (d): the Fourier spectrum of the generalized shape functions in (a) and (c), respectively.}
  \label{fig:2}
\end{figure}

Let $w(t)$ be a mother wave packet in the Schwartz class. The Fourier transform
$\widehat{w}(\xi)$ is assumed to be a real-valued, non-negative, smooth function
with a support equal to $(-d,d)$ with $d\leq 1$. Using $w(t)$, a family of wave packets can be constructed through scaling, modulation, and translation controlled by a geometric parameter $s$.

\begin{definition}
  \label{def:WA2d}
  Given the mother wave packet $w(t)$ and the parameter $s\in(1/2,1)$,
  the family of wave packets $\{w_{a b}(t): |a|\geq 1,b\in \R\}$
  is defined as
  \[
  w_{a b}(t)=|a|^{s/2} w(|a|^s(t-b)) e^{2\pi i (t-b)a},
  \]
  or equivalently, in the Fourier domain as
  \[
  \widehat{w_{ab}}(\xi) = |a|^{-s/2} e^{-2\pi i b\xi}
  \widehat{w}(|a|^{-s}(\xi-a)).
  \]
\end{definition}

If $s$ were equal to $1$ or $1/2$, these
functions would be qualitatively similar to the standard wavelets or the wave atoms \cite{Demanet2007}, respectively. Allowing one more degree of freedom $s$ makes wave packets more adaptive to the given data. 

\begin{definition}
  \label{def:WAT}
  The one-dimensional wave packet transform of a function $f(t)$ is a function
  \begin{align}
    W_f(a,b) 
    &= \langle w_{a b},f\rangle =  \int \overline{w_{a b}(t)}f(t)dt \label{eq:WAT} 
  \end{align}
  for $|a|\geq 1,b\in \R$.
\end{definition}

\begin{definition} 
  \label{def:IF}
  Instantaneous frequency information function:

  Let $f\in L^\infty(\R)$. The instantaneous frequency information function of $f$ is defined by
  \begin{equation}
    v_f(a,b)=
    \begin{cases}
      \frac{ \partial_b W_f(a,b) }{ 2\pi i W_f(a,b)},
      & \text{for }|W_f(a,b)|>0;\\
      \infty, 
      & otherwise,
    \end{cases}
    \label{E1single}
  \end{equation}
  where $\partial_b W_f(a,b)$ is the partial derivative of $W_f(a,b)$ with respect to $b$.
\end{definition}

As we shall see in Theorem \ref{thm:main2}, for a class of oscillatory functions $f(t) = \alpha(t) e^{2\pi i N\phi(t)}$, $v_f(a,b)\approx  N\phi'(b)$ independently of $a$ as long as $W_f(a,b)\neq 0$. If we reassign the coefficients $W_f(a,b)$ from the original position $(a,b)$ to a new position $(v_f(a,b),b)$, then we would obtain a sharpened time-frequency representation of $f(t)$. This motivates the definition of the synchrosqueezed energy distribution as follows.

\begin{definition}
  Given $f(t)$, $W_f(a,b)$, and $v_f(a,b)$, the synchrosqueezed energy
  distribution $T_f(v, b)$ is defined by
  \begin{equation}
    T_f(v,b) = \int_{\R} |W_f(a,b)|^2 \delta(\Re{v_f(a,b)}-v) da \label{eq:SED}
  \end{equation}
  for $v,b\in \R$, where $\Re$ means the real part of a complex number.
\end{definition} 

For a multi-component signal $f(t)=\sum_{k=1}^K \alpha_k(t) e^{2\pi i N_k \phi_k(t)}$, the synchrosqueezed energy of each component will concentrate around its corresponding instantaneous frequency $N_k\phi_k'(b)$, i.e. the supports of $T_f(v,b)$ are essentially narrow bands around the curves $(b,N_k\phi_k'(b))$ in the two-dimensional time-frequency domain, (see Figure \ref{fig:3} (left) for an example of $T_f(v,b)$). Hence, the SSWPT can provide information about their instantaneous frequencies. In the presence of generalized shape functions defined below, the spectral information becomes more complicated (see Figure \ref{fig:3} (right) for an example).

\begin{definition} Generalized shape functions:
\label{def:GSF}
The generalized shape function class ${\cal S}_M$ consists of $2\pi$-periodic functions $s(t)$ in the Wiener Algebra with a unit $L^2([-\pi,\pi])$-norm and a $L^\infty$-norm bounded by $M$ satisfying the following spectral conditions:
\begin{enumerate}
\item The Fourier series of $s(t)$ is uniformly convergent;
\item $\sum_{n=-\infty}^{\infty}|\widehat{s}(n)|\leq M$ and $\widehat{s}(0)=0$;
\item Let $\Lambda$ be the set of integers $\{|n|: \widehat{s}(n)\neq 0\}$. The greatest common divisor $\gcd(s)$ of all the elements in $\Lambda$ is $1$.
\end{enumerate}
\end{definition}

\begin{definition}
  \label{def:GIMTF}
  A function $f(t)=\alpha(t)s(2\pi N \phi(t))$ is a generalized intrinsic mode type
  function (GIMT) of type $(M,N)$, if $s(t)\in {\cal S}_M$ and $\alpha(t)$ and $\phi(t)$ satisfy the conditions below.
\begin{align*}
    \alpha(t)\in C^\infty, \quad |\alpha'|\leq M, \quad 1/M \leq \alpha\leq M \\
    \phi(t)\in C^\infty,  \quad  1/M \leq | \phi'|\leq M, \quad |\phi''|\leq M.
   \end{align*}
\end{definition}

\begin{definition}
  \label{def:GSWSIMC}
  A function $f(t)$ is a well-separated generalized superposition of type
  $(M,N,K,s)$, if
  \[
  f(t)=\sum_{k=1}^K f_k(t),
  \] 
  where each $f_k(t)=\alpha_k(t)s_k(2\pi N_k \phi_k(t))$ is a GIMT of type $(M,N_k)$ such that $N_k\geq N$ and the phase functions satisfy the
  separation condition: for any pair $(a,b)$, there exists at most one pair $(n,k)$ such that $\widehat{s_k}(n)\neq 0$ and that
\[
|a|^{-s}|a-nN_k\phi_k'(b)|< d.
\]
We denote by $GF(M,N,K,s)$ the set of all such functions.
\end{definition}

If $f(t)$ is a well-separated generalized superposition of type
  $(M,N,K,s)$, the synchrosqueezed energy distribution $T_f(v,b)$ has well-separated supports, each of which concentrates around one instantaneous frequency $nN_k\phi_k'(b)$. Fortunately, although $f(t)$ is not a well-separated generalized superposition, its Fourier expansion components $\widehat{s}_k(n)\alpha_k(t)e^{2\pi i nN_k\phi_k(t)}$ might still be well-separated in the low-frequency domain (see Figure \ref{fig:3} (left) for an example). Hence, some instantaneous frequency $nN_k\phi_k'(b)$ can be estimated from the ridge (or average) of its corresponding support, and its corresponding component $\widehat{s_k}(n)\alpha_k(t) e^{2\pi i n N_k \phi_k(t)}$ can be recovered by an inverse SST restricted to the corresponding support. Since in practice, and as illustrated by Figure \ref{fig:3}, even if low-frequency components are well-separated, high-frequency components might still be mixed up, well-separated generalized superposition is thus very rare; this motivates the definition of a more reasonable situation below.

\begin{definition}
 A function $f(t)$ is a weak well-separated generalized superposition of type
  $(M,N,K,s)$ if
  \[
  f(t)=\sum_{k=1}^K f_k(t)
  \] 
  where each $f_k(t)=\alpha_k(t)s_k(2\pi N_k \phi_k(t))$ is a GIMT of type $(M,N_k)$ such that $N_k\geq N$ and the phase functions satisfy the following weak well-separation conditions. 
\begin{enumerate}
\item Suppose 
\[
Z_{nk}=\left\{(a,b):|a-nN_k\phi_k'(b)|\leq d|a|^{s}\right\}.
\]
For each $k$, there exists $n_k$ such that $\widehat{s_k}(n_k)\neq 0$ and $Z_{n_kk}\cap Z_{nj}=\emptyset$ for all pairs $(n,j)\neq (n_k,k)$ and $\widehat{s_j}(n)\neq 0$.
\item $\exists K_0<\infty$ such that $\forall a\in\R$ and $\forall b\in \R$ there exists at most $K_0$ pairs of $(n,k)$ such that $(a,b)\in Z_{nk}$.
\end{enumerate}
We denote by $wGF(M,N,K_0,K,s)$ the set of all such functions.
\end{definition}

The weak well-separation condition essentially requires that each generalized mode has at
least one Fourier expansion component $\widehat{s}_k(n)\alpha_k(t)e^{2\pi i nN_k\phi_k(t)}$ well-separated in the time-frequency domain. This enables the
SSWPT to estimate the fundamental instantaneous amplitude and frequency of each generalized mode. The following theorem proved in \cite{1DSSWPT} supports this intuition in more detail. Recall that, when we write $O(\cdot)$, $\lesssim$, or $\gtrsim$, the implicit constants may depend on $M$, $K$, $K_0$, and no other parameters.

\begin{theorem}
  \label{thm:main2}
  For a function $f(t)$ and $\eps>0$, we define
  \[
  R_{\eps} = \{(a,b): |W_f(a,b)|\geq |a|^{-s/2}\sqrt \eps\}
  \]
  and 
  \[
  Z_{n,k} = \{(a,b): |a-nN_k\phi_k'(b)|\leq d|a|^s \}
  \]
  for $1\le k\le K$ and $|n|\geq 1$. For fixed $M$, $K_0$, $K$ and $\forall \eps>0$, there
  exists a constant $N_0(M,K_0,K,s,\eps)>0$ such that
  $\forall N>N_0$ and $f(t)\in wGF(M,N,K_0,K,s)$ the following statements
  hold.
  \begin{enumerate}[(i)]
  \item For each $j$, there exists $n_j$ such that $\widehat{s_j}(n_j)\neq 0$ and $Z_{n_jj}\cap Z_{nk}=\emptyset$ for all pairs $(n,k)\neq (n_j,j)$ and $\widehat{s_k}(n)\neq 0$;
  \item For any $(a,b) \in R_{\eps} \cap Z_{n_j,j}$, 
    \[
    \frac{|v_f(a,b)-n_jN_j\phi_j'(b)|}{ |n_jN_j \phi_j'(b)|}\lesssim\sqrt \eps.
    \]
\item For each $j$, let 
\[
l_{n_j}(b)=\min \left\{a:(a,b)\in R_\epsilon\cap Z_{n_jj}\right\},\quad u_{n_j}(b)=\max\left\{a:(a,b)\in R_\epsilon\cup Z_{n_jj}\right\}.
\]
Suppose $v_f(a,b)\neq \infty$. If $a\leq l_{n_j}(b)$, then $v_f(a,b)\leq l_{n_j}(b)(1+O(\sqrt{\epsilon}))$. If $a\geq u_{n_j}(b)$, then $v_f(a,b)\geq u_{n_j}(b)(1-O(\sqrt{\epsilon}))$.
  \end{enumerate}
\end{theorem}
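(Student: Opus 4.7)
The plan is to expand each generalized mode into its Fourier series, so that
\[
W_f(a,b) = \sum_{k=1}^K \sum_{n\in\mathbb{Z}} \widehat{s_k}(n)\, W_{g_{n,k}}(a,b),\qquad g_{n,k}(t) := \alpha_k(t)\,e^{2\pi i n N_k \phi_k(t)},
\]
and analogously for $\partial_b W_f$. This reduces the proof to a sharp concentration estimate for each individual $W_{g_{n,k}}$ together with its $b$-derivative, after which the weak well-separation hypothesis decouples the sums on the relevant slices of phase space.

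For a single term $g_{n,k}$, I would carry out a Taylor expansion at $t=b$ inside the defining integral $\langle w_{ab}, g_{n,k}\rangle$: writing $\alpha_k(t)=\alpha_k(b)+O(M|t-b|)$ and $N_k\phi_k(t)=N_k\phi_k(b)+N_k\phi_k'(b)(t-b)+O(MN_k|t-b|^2)$, the leading term evaluates to
\[
W_{g_{n,k}}(a,b) = \alpha_k(b)\,e^{2\pi i n N_k\phi_k(b)}\,|a|^{-s/2}\,\overline{\widehat{w}\!\bigl(|a|^{-s}(a-nN_k\phi_k'(b))\bigr)} \;+\; E_{n,k}(a,b),
\]
where $E_{n,k}$ is controlled using the Schwartz decay of $w$ together with the bounds on $\alpha_k'$ and $\phi_k''$; crucially the window $\widehat{w}$ is supported in $(-d,d)$, so the leading term vanishes outside $Z_{n,k}$ and the error is $O(N^{-(1-s)/2})$ times a Schwartz factor in the distance to $Z_{n,k}$. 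An identical expansion for $\partial_b W_{g_{n,k}}$ gives
\[
\partial_b W_{g_{n,k}}(a,b) = 2\pi i\,nN_k\phi_k'(b)\,W_{g_{n,k}}(a,b) + \widetilde E_{n,k}(a,b),
\]
with a comparable bound on $\widetilde E_{n,k}$.

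Granting these single-term estimates, part (i) is literally the first clause of the $wGF$ definition. For part (ii), if $(a,b)\in R_\epsilon\cap Z_{n_j,j}$ then weak well-separation forces every other $(n,k)$ with $\widehat{s_k}(n)\neq 0$ to satisfy $(a,b)\notin Z_{n,k}$; the Schwartz decay of $\widehat{w}$ in $E_{n,k}$, combined with $\sum_n|\widehat{s_k}(n)|\leq M$, shows that both $W_f(a,b)$ and $\partial_b W_f(a,b)$ are dominated by the $(n_j,j)$ term up to an $O(|a|^{-s/2}N^{-(1-s)/2})$ remainder. Dividing and using the denominator bound $|W_f(a,b)|\geq |a|^{-s/2}\sqrt{\epsilon}$ from $R_\epsilon$ yields the claimed relative error $O(\sqrt{\epsilon})$, provided $N_0$ is chosen so that $N^{-(1-s)/2}\lesssim\epsilon$. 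For part (iii), fix $b$ with $a\leq l_{n_j}(b)$: any $Z_{n,k}$ containing $(a,b)$ must satisfy $nN_k\phi_k'(b)\leq a+d|a|^s\leq l_{n_j}(b)(1+o(1))$, so writing $W_f$ and $\partial_b W_f$ as a sum over those surviving $Z_{n,k}$ and applying the same ratio argument yields $v_f(a,b)\leq l_{n_j}(b)(1+O(\sqrt{\epsilon}))$; the case $a\geq u_{n_j}(b)$ is symmetric.

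The main obstacle is uniform control of the Fourier tail: $\widehat{s_k}(n)$ is only absolutely summable, not compactly supported, so arbitrarily many terms can in principle contribute. The argument must show that the rapid decay of $\widehat{w}$ away from each $Z_{n,k}$, summed against $\sum_n|\widehat{s_k}(n)|\leq M$, still produces an error that is uniformly small on $R_\epsilon$; this is what forces $N_0$ to depend on $\epsilon$ and $M$, and is the point where the bound $|W_f|\geq |a|^{-s/2}\sqrt{\epsilon}$ and the polynomial growth of $|a|^s$ have to be balanced carefully against the Schwartz gain from $\widehat{w}$.
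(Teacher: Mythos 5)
You should note first that this paper does not prove Theorem \ref{thm:main2} at all: it is quoted verbatim from the reference \cite{1DSSWPT} (``The following theorem proved in \cite{1DSSWPT}\dots''), so there is no in-paper proof to compare against. Measured against the standard argument in that reference, your strategy is the right one: expand each $s_k$ in Fourier series, establish a leading-order approximation of $W_{g_{n,k}}(a,b)$ and $\partial_b W_{g_{n,k}}(a,b)$ by Taylor expansion of $\alpha_k$ and $\phi_k$ at $t=b$, use the compact support of $\widehat{w}$ to confine the main term to $Z_{n,k}$, invoke the first clause of the $wGF$ definition (which indeed gives (i) immediately) to isolate the $(n_j,j)$ term on $R_\eps\cap Z_{n_j,j}$, and sum the tails against $\sum_n|\widehat{s_k}(n)|\le M$. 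That is essentially the proof in \cite{1DSSWPT}.

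However, as written your proposal has two genuine gaps. First, the single-term estimates carry the entire technical weight and are only asserted: the error exponent $O(N^{-(1-s)/2})$ is stated without derivation, and it is not the scaling one actually gets -- the Taylor error from $\phi_k''$ integrated against $|a|^{s/2}w(|a|^s(t-b))$ with $a\sim nN_k$ produces relative errors of the type $N^{1-2s}$ (this is precisely why $s\in(1/2,1)$ is required), and the derivative estimate $\partial_b W_{g_{n,k}}\approx 2\pi i\,nN_k\phi_k'(b)W_{g_{n,k}}$ needs its own bookkeeping of the $\alpha_k'$ and window-derivative terms; without these the balancing of $N_0$ against $\eps$ cannot be carried out, which is the crux you yourself flag. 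Second, and more seriously, your treatment of (iii) does not go through as described: ``applying the same ratio argument'' presupposes the lower bound $|W_f(a,b)|\ge|a|^{-s/2}\sqrt{\eps}$, which was the defining property of $R_\eps$ used in (ii), whereas (iii) only assumes $v_f(a,b)\neq\infty$, i.e.\ $W_f(a,b)\neq 0$ with no quantitative floor. Off $R_\eps$ the error terms can be comparable to, or larger than, the surviving main terms, and $v_f$ is a ratio with complex (non-positive) weights, so it is not automatically a convex combination of the contributing frequencies $nN_k\phi_k'(b)$; the one-sided bounds $v_f\le l_{n_j}(b)(1+O(\sqrt{\eps}))$ and $v_f\ge u_{n_j}(b)(1-O(\sqrt{\eps}))$ therefore require a separate argument (this is exactly the delicate point in the cited proof), not a rerun of (ii). You would need either to restore a lower bound on $|W_f|$ in the regime considered or to control numerator and denominator simultaneously in a way that survives small $|W_f|$.
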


\begin{figure}
  \begin{center}
    \begin{tabular}{cc}
      \includegraphics[height=2.4in]{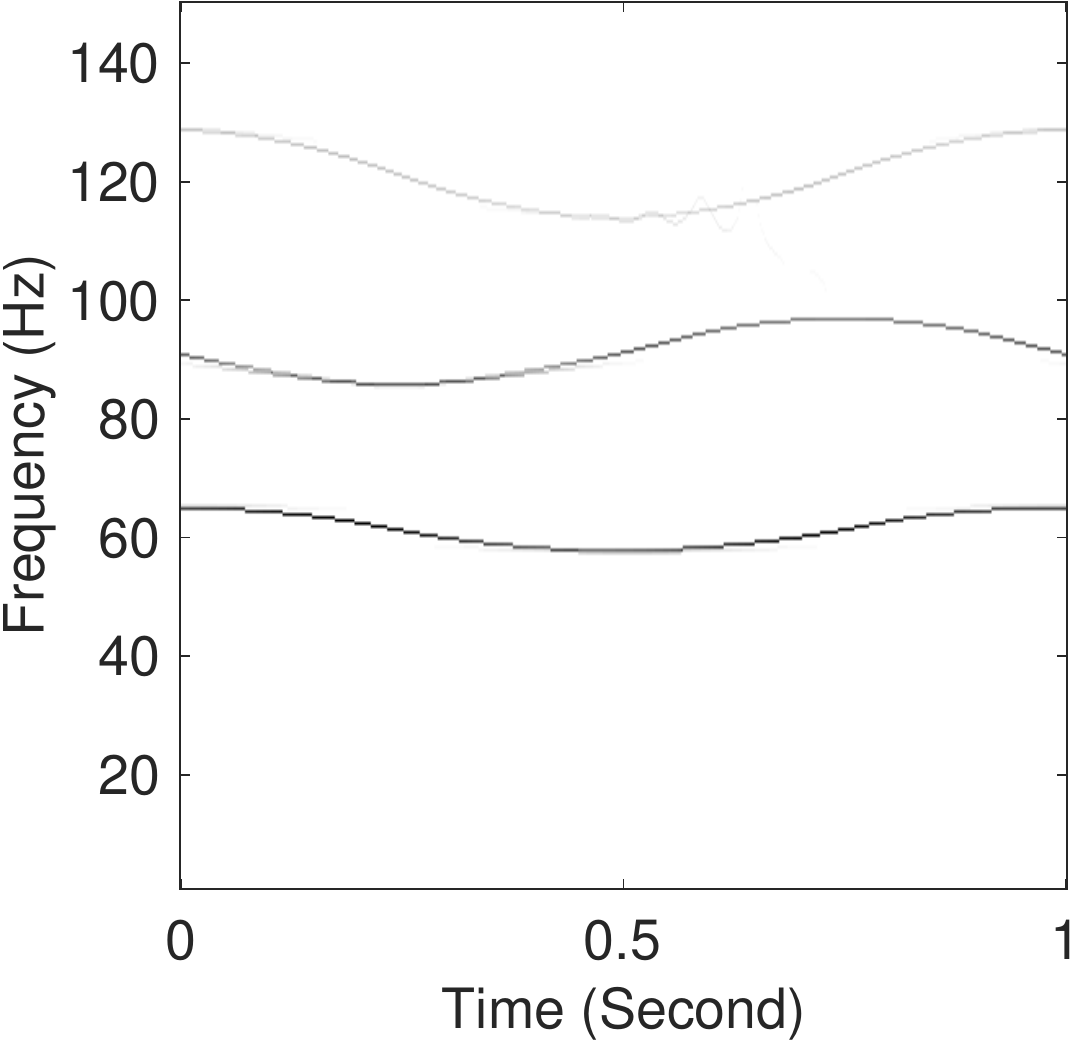} &
      \includegraphics[height=2.4in]{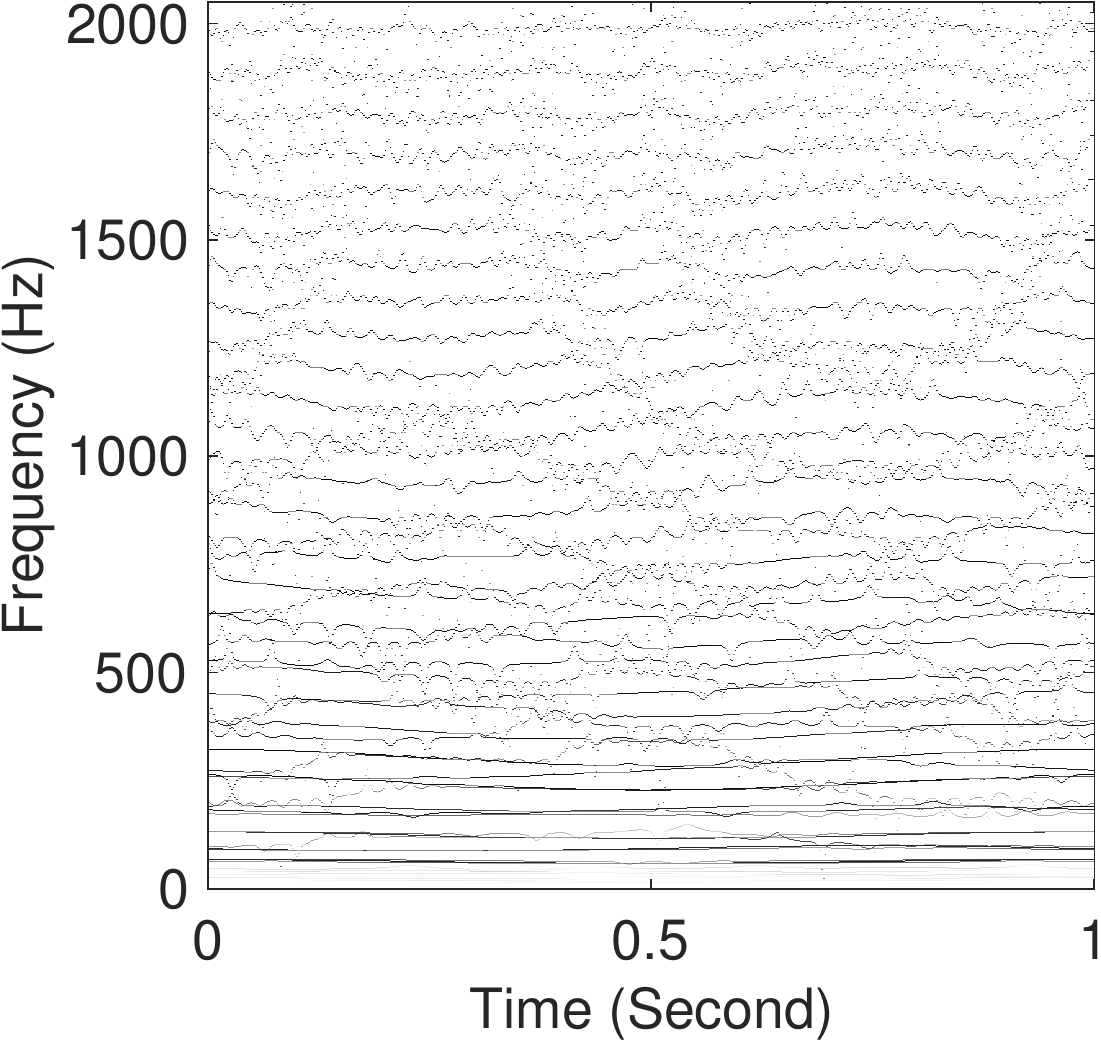} 
    \end{tabular}
  \end{center}
  \caption{Both figures show the synchrosqueezed energy distribution of the same signal in Equation \ref{eqn:ex1} . Left: in the low-frequency domain, the synchrosqueezed energy distribution shows a few well-separated oscillatory components. The instantaneous frequencies of these components can be directly read off from this distribution. Right: the synchrosqueezed energy distribution in the whole time-frequency domain. Instantaneous frequencies of the Fourier expansion terms in $\{ \widehat{s_k}(n)\alpha_k(t) e^{2\pi i n N_k \phi_k(t)}\}_{k,n}$  cross over together.}
  \label{fig:3}
\end{figure}

Theorem \ref{thm:main2} shows that the supports of the synchrosqueezed energy distribution $T_f(v,b)$ are essentially narrow bands around the curves $(b,n_kN_k\phi_k'(b))$ in the two-dimensional time-frequency domain, (see Figure \ref{fig:3} (left) for an example of $T_f(v,b)$). Hence, we can estimate the instantaneous frequencies by tracking these curves. These curves naturally belong to $K$ groups, each of which corresponds to the multiple of a fundamental instantaneous frequency $N_k\phi'_k(t)$. Following the curve classification idea in Algorithm 3.7 and Theorem 3.9 in \cite{1DSSWPT}, we are able to classify these curves and extract the fundamental instantaneous frequencies $\{N_k\phi'_k(t)\}_{1\leq k\leq K}$, which give the fundamental instantaneous phases $\{N_k\phi_k(t)\}_{1\leq k\leq K}$. By applying the inverse synchrosqueezed transform to the support of $T_f(v,b)$ corresponding to the instantaneous frequency $n_k N_k\phi'_k(t)$, we can reconstruct $\widehat{s}_k(n_k)\alpha_k(t)e^{2\pi i n_k N_k\phi_k(t)}$, the magnitude of which gives the estimation of the fundamental instantaneous amplitudes $\alpha_k(t)$ up to a constant prefactor. We refer the reader to \cite{1DSSWPT} for more detail and assume that these estimates are available afterward. There are some other alternative methods available to estimate fundamental instantaneous properties, e.g. a recent paper \cite{ceptrum} based on the short-time ceptrum transform.

\section{Recursive diffeomorphism-based shape regression (RDBR)}
\label{sec:RDBR}

\subsection{Algorithm description}
\label{sub:aglo}

In this section, we introduce the recursive diffeomorphism-based regression (RDBR), which we shall use to estimate generalized shape functions $s_k(t)$ in a superposition of the form
\begin{equation*}
f(t)= \sum_{k=1}^K \alpha_k(t) s_k(2 \pi N_k \phi_k(t)),
\end{equation*}
assuming that the fundamental instantaneous phases $\{N_k\phi_k(t)\}_{k=1}^K$ and the instantaneous amplitudes $\{\alpha_k(t)\}_{k=1}^K$ are known a priori. Suppose the signal $f(t)$ is sampled randomly in the domain $[0,1]$. In particular, we have $L$ points of measurement $\{f(t_\ell)\}_{\ell=1,\dots,L}$ with $L$ independent and identically distributed (i.i.d.) grid points $\{t_\ell\}_{\ell=1,\dots,L}$ with a uniform distribution in $[0,1]$. Usually, the grid is uniform in $[0,1)$, which is a special case in our assumption. Numerical examples are provided in Section \ref{sub:iid} to support this assumption.

Notice that the smooth function $p_k(t)=N_k\phi_k(t)$ has the interpretation of a warping in each generalized mode via a diffeomorphism $p_k:\R\to\R$. Hence, we can define the inverse-warping data by
\begin{eqnarray*}
h_k(v)& =& \frac{f\circ p_k^{-1}(v)}{\alpha_k\circ p_k^{-1}(v)}\\
&=&s_k(2\pi v)+ \sum_{j\neq k} \frac{\alpha_j\circ p_k^{-1}(v)}{\alpha_k\circ p_k^{-1}(v)} s_j(2\pi   p_j\circ p_k^{-1}(v))\\
&\defeq & s_k(2\pi v)+ \kappa_k(v),
\end{eqnarray*}
where $v = p_k(t)$. Correspondingly, we have a set of measurements of $h_k(v)$ sampled on  $\{h_k(v_\ell)\}_{\ell=1,\dots,L}$ with $v_\ell=p_k(t_\ell)$.

The observation that $s_k(2\pi v)$ is a periodic function with a period $1$ motivates the following folding map $\tau$ that folds the two-dimensional point set $\{(v_\ell,h_k(v_\ell))\}_{\ell=1,\dots,L}$  together
\begin{eqnarray*}
\tau: \ \      \left(v_\ell, h_k(v_\ell) \right)   \mapsto    \left(\text{mod}(v_\ell,1),  h_k(v_\ell) \right).
\end{eqnarray*}
If there was only one generalized mode, then the point set $\{\tau(v_\ell,s_k(2\pi v_\ell))\}_{\ell=1,\dots,L}\subset \mathbb{R}^2$ is a two-dimensional point set located at the curve $(v,s_k(2\pi v))\subset \mathbb{R}^2$ given by the generalized shape function $s_k(2\pi v)$ with $v\in [0,1)$. Figure \ref{fig:4} (left) visualizes one example of this point set in the case of one mode. This could also be understood in the following way. Let $X_k$ be an independent random variable in $[0,1)$ and $Y_k$ be the response random variable in $\mathbb{R}$. Consider $(x_\ell,y_\ell)=\tau(v_\ell,s_k(2\pi v_\ell))$ for $\ell =1,\dots,L$ as $L$ i.i.d. samples of the random vector $(X_k,Y_k)$. Then we know $s_k$ is the regression function satisfying $Y_k=s_k(2\pi X_k)$. Hence, the solution of the following regression problem gives the generalized shape function
\begin{equation}
\label{eqn:re}
s_k = s^R_k\defeq \underset{s:\mathbb{R}\rightarrow \mathbb{R}}{\arg\min}\quad \E\{\left| s(2\pi X_k)-Y_k\right|^2\},
\end{equation}
where the superscript $^R$ means the ground truth regression function. 
Let $\bar{s}_k$ denote the numerical solution of the above regression problem, then the approximation $\bar{s}_k\approx s_k$ is precise once $L$ is sufficiently large, since the variance $\sigma^2\defeq\Var\{Y_k|X_k=x\}=0$ for all $x\in[0,1)$. Once the generalized shape function $s_k$ has been estimated, the estimated generalized mode $\alpha_k(t)\bar{s}_k(2\pi N_k\phi_k(t))$ is an immediate result.

The assumption that $(x_\ell,y_\ell)=\tau(v_\ell,s_k(2\pi v_\ell))$ for $\ell =1,\dots,L$ are $L$ i.i.d. samples of the random vector $(X_k,Y_k)$ comes from the fact that grid points $\{t_\ell\}_{\ell=1,\dots,L}$ are i.i.d.. In practice, these grid points are determinate and usually uniform. Notice that the warping and folding maps behave essentially like pseudorandom number generators with a nonlinear congruential function $mod(p_k(t),1)$ for $k=1,\dots,K$. Hence, even if in the case of determinate grid points, the point set $(x_\ell,y_\ell)=\tau(v_\ell,s_k(2\pi v_\ell))$ for $\ell =1,\dots,L$ has similar statistical properties like a set of i.i.d. samples. It might be interesting to analyze this assumption further but we would only assume it in this paper and focus on the regression problem. This assumption will be validated by numerical examples in Section \ref{sub:iid}.

\begin{figure}
  \begin{center}
    \begin{tabular}{ccc}
      \includegraphics[height=1.8in]{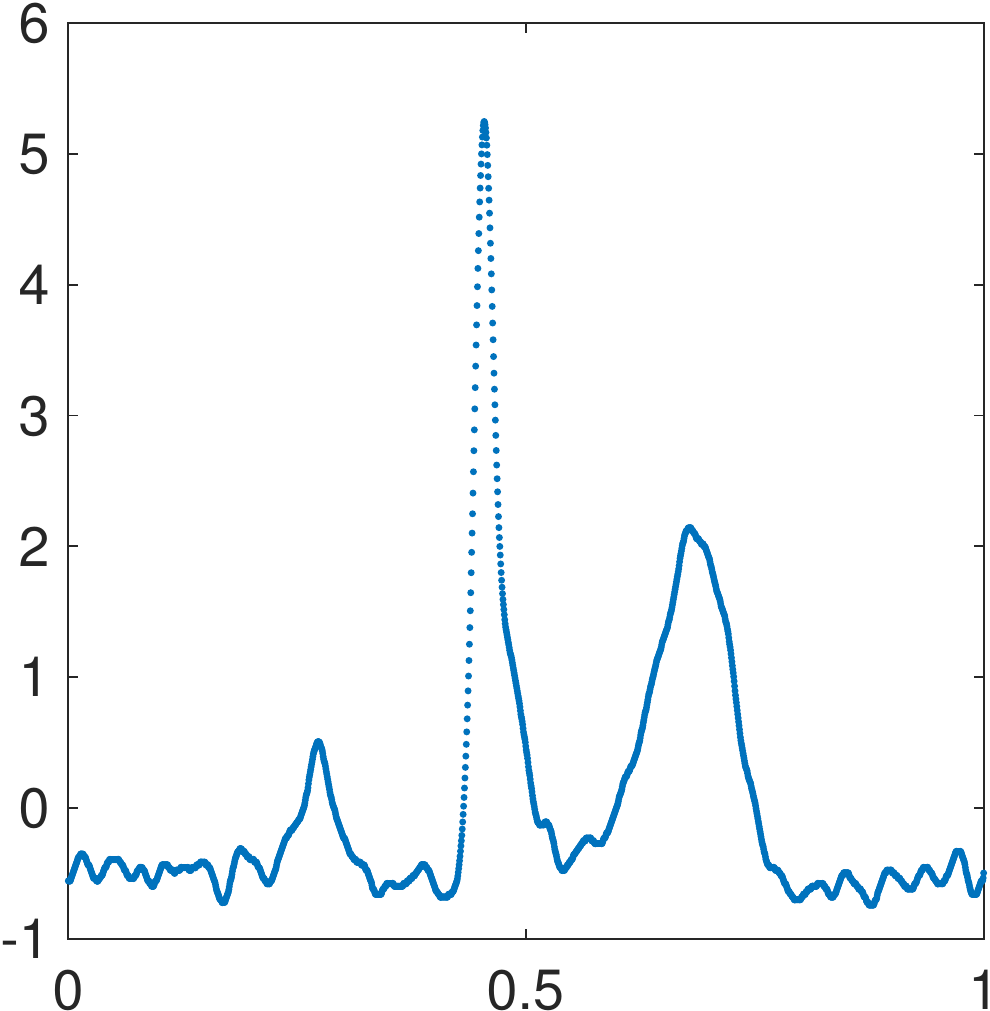} &
      \includegraphics[height=1.8in]{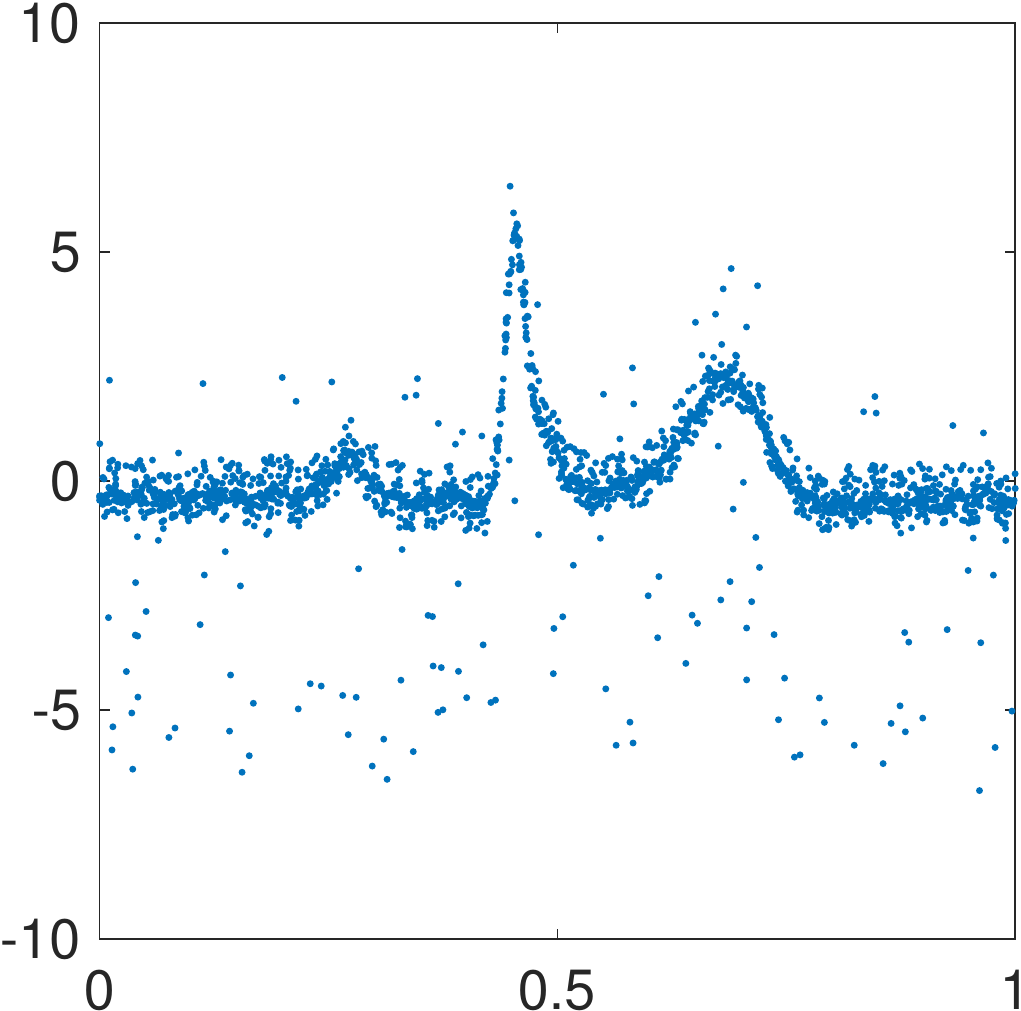} &
      \includegraphics[height=1.8in]{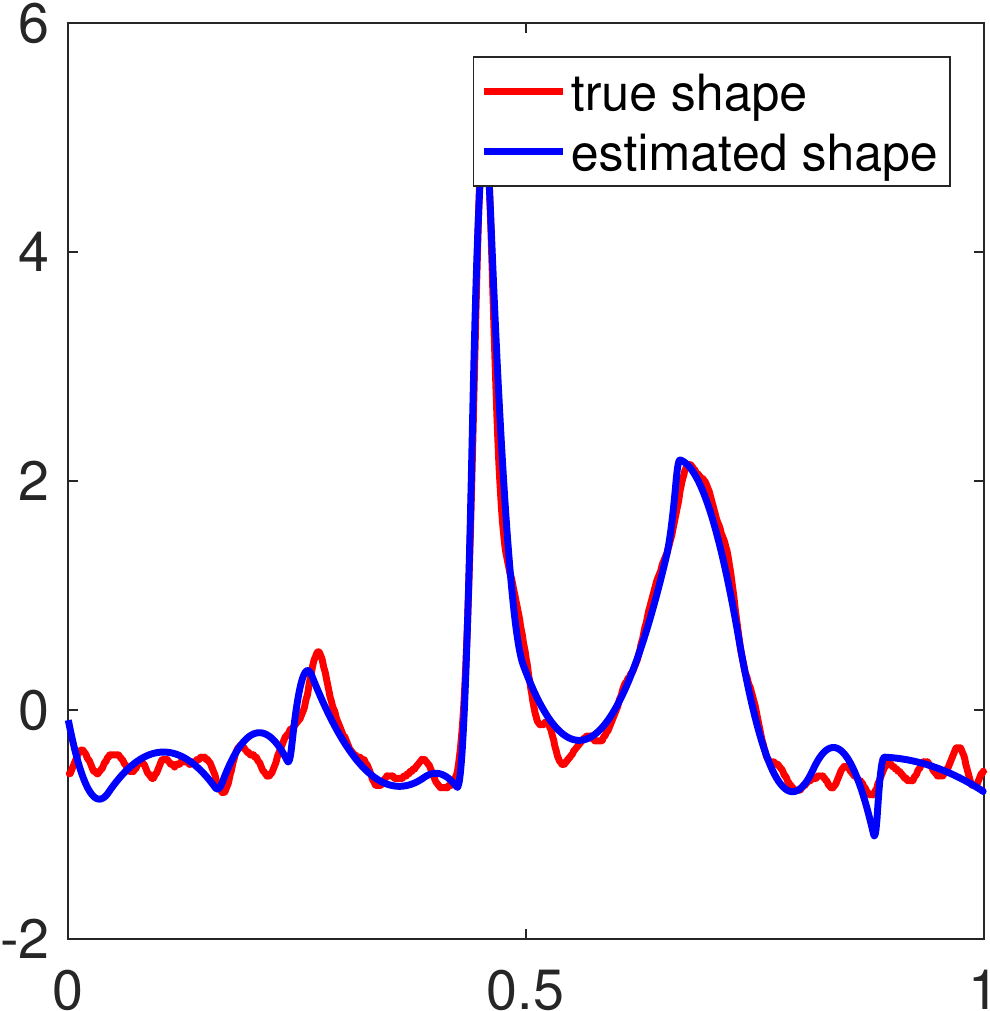} 
    \end{tabular}
  \end{center}
  \caption{Left: the point set $\{\tau(v_\ell,s_k(2\pi v_\ell))\}_{\ell=1,\dots,L}\subset \mathbb{R}^2$ in the case of one mode $f(t)=f_1(t)$, where $f_1(t)$ is given in \eqref{eqn:ex1}. Middle:  the point set $\{\tau(v_\ell,s_k(2\pi v_\ell))\}_{\ell=1,\dots,L}\subset \mathbb{R}^2$ in the case of two modes $f(t)=f_1(t)+f_2(t)$ defined in \eqref{eqn:ex1}. Right: the true shape function $s_1(2\pi t)$ and the estimated one $\bar{s}_1(2\pi t)$ by regression once from the point set in the middle figure.}
  \label{fig:4}
\end{figure}

 However, in the presence of multiple modes, the trace $\tau(v_\ell,\kappa_k(v_\ell))$ of $\kappa_k(v)$ in the $\ell$th component stets like a noise perturbation in the regression problem (see Figure \ref{fig:4} middle for an illustration). In this case, $(x_\ell,y_\ell)=\tau(v_\ell,s_k(2\pi v_\ell)+\kappa_k(v_\ell))$ for $\ell =1,\dots,L$ can be considered as $L$ i.i.d. samples of a random vector $(X_k,Y_k)$ with a noise perturbation in $Y_k$. Hence, the regression in \eqref{eqn:re} only results in a rough estimate $\bar{s}_k$ as shown in Figure \ref{fig:4} right. To be more precise, let $s^E_k(2\pi x)\defeq \E\{Y_k-s_k(2\pi X_k)|X_k=x\}$ be the residual shape function, then $\bar{s}_k\approx s^R_k\defeq s_k+s^E_k$. Hence, the residual error of the mode decomposition 
 \[
 r(t)=f(t)- \sum_{k=1}^K \alpha_k(t) \bar{s}_k(2 \pi N_k \phi_k(t))\approx-\sum_{k=1}^K \alpha_k(t) s^E_k(2 \pi N_k \phi_k(t))
 \]
  might be large, if the residual shape functions $\{s^E_k\}_k$ are not zero. This motivates the design of a recursive scheme that repeats the same decomposition procedure to decompose the residual $r(t)$ until the residual is small as follows.

\begin{algorithm2e}[H]
\label{alg:pcg}
\caption{Recursive diffeomorphism-based regression (RDBR).}
Input: $L$ points of i.i.d. measurement $\{f(t_\ell)\}_{\ell=1,\dots,L}$ with $t_\ell \in [0,1]$, instantaneous phases $\{p_k\}_{k=1,\dots,K}$, instantaneous amplitudes $\{\alpha_k\}_{k=1,\dots,K}$, an accuracy parameter $\epsilon<1$, and the maximum iteration number $J$.

Output: generalized shape function estimates  $\{\tilde{s}_k\}_{k=1,\dots,K}$.

Initialize: let $r^{(0)}=f$, $\epsilon_1=\epsilon_2=1$, $\epsilon_0=2$, the iteration number $j=0$, $\bar{s}^{(0)}_k=0$,  and $\tilde{s}_k=0$ for all $k=1,\dots,K$.

\While{$j<J$, $\epsilon_1> \epsilon$, $\epsilon_2>\epsilon$, and $|\epsilon_1-\epsilon_0|>\epsilon$}{ For all $k$, $1\leq k\leq K$, define \[h^{(j)}_k= \frac{r^{(j)}\circ p_k^{-1}}{\alpha_k\circ p_k^{-1}},\]  and we know it is sampled on grid points $v_\ell=p_k(t_\ell)$;

Observe that $\left\{\tau(v_\ell,h^{(j)}_k(v_\ell))\right\}_{\ell=1,\dots,L}$ behaves like a sequence of i.i.d. samples of a certain random vector $(X_k,Y^{(j)}_k)$ with $X_k\in [0,1)$;
  
For all $k$, $1\leq k\leq K$, solve the distribution-free regression problem
\begin{equation}
\label{eqn:reg}
\bar{s}^{(j+1)}_k \approx s^{R,(j+1)}_k=\underset{s:\mathbb{R}\rightarrow \mathbb{R}}{\arg\min}\quad \E\{\left| s(2\pi X_k)-Y^{(j)}_k\right|^2\},
\end{equation}
where $\bar{s}^{(j+1)}_k$ denotes the numerical solution approximating the ground truth solution $s^{R,(j+1)}_k$;

Update $\bar{s}_k^{(j+1)}=\bar{s}_k^{(j+1)}-\frac{1}{2\pi}\int_0^{2\pi} \bar{s}_k^{(j+1)}(t)dt$ for all $k$;

Update $\tilde{s}_k=\tilde{s}_k+\bar{s}_k^{(j+1)}$ for all $k$;

Update $r^{(j+1)}=r^{(j)}-\sum_{k=1}^K \alpha_k(t) \bar{s}^{(j+1)}_k(2 \pi p_k(t))$;

Update $\epsilon_0=\epsilon_1$, $\epsilon_1=\|r^{(j+1)}\|_{L^2}$, $\epsilon_2=\max_{k}\{\|\bar{s}_k^{(j+1)}\|_{L^2}\}$, 

Set $j = j + 1$. 
}
\end{algorithm2e}

Remark that Line $8$ in Algorithm \ref{alg:pcg} is essential because it is a crucial condition for the convergence of the recursive scheme as we shall see in Lemma \ref{lem:SP}. It ensures that all generalized shape functions in each iteration has a zero mean. In practice, it is sufficient to estimate instantaneous amplitude functions $\{\alpha_k(t)\}$ up to an unknown prefactor, (e.g., $\{\widehat{s}_k(1)\alpha_k(t)\}$ can be estimated by the synchrosqueezed transform but $\{\widehat{s}_k(1)\}$ are not available), because the unknown prefactor has been absorbed in the  shape function estimation $\tilde{s}_k(t)$, which will approximate $s_k(t)/\widehat{s}_k(1)$.  Hence, when we reconstruct the $k$th mode, the unknown prefactor is cancelled out as follows
\[
\widehat{s}_k(1)\alpha_k(t)\frac{1}{\widehat{s}_k(1)}s_k(2\pi p_k(t))=\alpha_k(t)s_k(2\pi p_k(t)).
\]
Similarly, it is sufficient that a phase function $p_k(t)$ is available up to a constant, e.g., $p_k(t)-p_k(0)$ can be estimated by the synchrosqueezed transform but $p_k(0)$ is unknown. A shift in the estimation of a phase function leads to a shift in the estimation of its corresponding shape function. In the end, the shift is cancelled out when a mode is reconstructed.

Although Algorithm \ref{alg:pcg} relies on the exact instantaneous amplitudes and phases, numerical examples in Section \ref{sec:results} shows that the algorithm is not sensitive to the input amplitudes and phases as long as the folding procedure in Line $6$ is able to fold the periodic part in $h^{(j)}_k$ together.

\subsection{Convergence analysis}
\label{sub:conv}

In this section, an asymptotic analysis on the convergence of the recursive diffeomorphism-based regression (RDBR) method is provided. The partition-based regression method (or partitioning estimate) in Chapter 4 of \cite{regressionBook} will be used. The first theorem concerns about the convergence rate of the estimated regression function to the ground truth regression function as the number of samples tends to infinity. The second theorem clarifies the conditions that guarantee the convergence of the RDBR. The last theorem shows that the RDBR method is robust against noise. Before presenting these theorems, some notations and definitions are introduced below. 

Given a small step side $h\ll 1$, the time domain $[0,1]$ is uniformly partitioned into $N^h = \frac{1}{h}$ (assumed to be an integer) parts $\{[t^h_n,t^h_{n+1})\}_{n=0,\dots,N^h-1}$, where $t^h_n=nh$. The partition-based regression method with this uniform partition is applied to analyze the RDBR. Suppose $(x_\ell,y_\ell)_{\ell=1,\dots,L}$ are $L$ i.i.d. samples of a random vector $(X,Y)$ with a ground truth regression function denoted as $s^R$. Let $s^P_L$ denote the estimated regression function by the partition-based regression method with $L$ samples. Following the definition in Chapter 4 of \cite{regressionBook}, we have
\[
s^R(x)\approx s^P_L(x) \defeq\frac{\sum_{\ell=1}^L\mathcal{X}_{[t^h_n,t^h_{n+1})} (x_\ell) y_\ell }{\sum_{\ell=1}^L\mathcal{X}_{[t^h_n,t^h_{n+1})}(x_\ell) },
\]
when $x\in [t^h_n,t^h_{n+1})$, where $\mathcal{X}_{[t^h_n,t^h_{n+1})}(x)$ is an indicator function of $[t^h_n,t^h_{n+1})$. The following theorem given in Chapter 4 in \cite{regressionBook} estimates the $L_2$ risk of the approximation $s^P_L\approx s^R$ as follows.

\begin{theorem}
\label{thm:reg}
For the uniform partition with a step side $h$ in $[0,1)$ as defined just above, assume that 
\[
\Var(Y|X=x)\leq \sigma^2,\quad x\in\mathbb{R},
\]
\[
|s^R(x)-s^R(z)|\leq C|x-z|,\quad x,z\in\mathbb{R},
\]
$X$ has a compact support $[0,1)$, and there are $L$ i.i.d. samples of $(X,Y)$. Then the partition-based regression method provides an estimated regression function $s^P_L$ to approximate the ground truth regression function $s^R$, where
\begin{equation*}
s^R = \underset{s:\mathbb{R}\rightarrow \mathbb{R}}{\arg\min}\quad \E\{\left| s(2\pi X)-Y\right|^2\},
\end{equation*}
with an $L^2$ risk bounded by
\[
\E\|s^P_L-s^R\|^2\leq c_0\frac{\sigma^2+\|s^R\|^2_{L^\infty}}{Lh}+C^2h^2,
\]
where $c_0$ is a constant independent of the number of samples $L$, the regression function $s$,  the step side $h$, and the Lipschitz continuity constant $C$.
\end{theorem}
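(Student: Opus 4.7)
The plan is to apply the classical bias-variance analysis for histogram (partition-based) regression estimators developed in Chapter 4 of \cite{regressionBook}. Write $I_n = [t^h_n, t^h_{n+1})$, let $N_n = \sum_{\ell=1}^L \mathcal{X}_{I_n}(x_\ell)$ be the number of design points falling in $I_n$, and set $\hat{s}_n = N_n^{-1} \sum_{\ell : x_\ell \in I_n} y_\ell$ on $\{N_n > 0\}$ (with $\hat{s}_n = 0$ otherwise), so that $s^P_L(x) = \hat{s}_n$ for $x \in I_n$. The first step is the decomposition
\[
\E \|s^P_L - s^R\|^2 = \sum_{n=0}^{N^h-1} \int_{I_n} \E\bigl[(\hat{s}_n - s^R(x))^2\bigr]\, dx,
\]
splitting each integrand into a conditional-variance piece, a conditional-bias piece, and an empty-bin piece.

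Next, I condition on the design points. On $\{N_n > 0\}$ the $y_\ell$'s entering $\hat{s}_n$ are conditionally independent with means $s^R(x_\ell)$ and variances at most $\sigma^2$, so $\Var(\hat{s}_n \mid \text{design}) \leq \sigma^2/N_n$, while the Lipschitz hypothesis yields, for $x \in I_n$,
\[
\bigl| \E[\hat{s}_n \mid \text{design}] - s^R(x) \bigr| \leq \frac{1}{N_n}\sum_{x_\ell \in I_n} |s^R(x_\ell) - s^R(x)| \leq Ch.
\]
Taking expectations and integrating over $I_n$, the bias part contributes at most $C^2 h^3$ per bin, while the variance part contributes at most $h\sigma^2\, \E[\mathcal{X}_{\{N_n > 0\}}/N_n]$. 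The standard binomial estimate $\E[\mathcal{X}_{\{N_n > 0\}}/N_n] \leq 2/((L+1)h)$ (see Chapter 4 of \cite{regressionBook}) bounds this by $2\sigma^2/L$ per bin, and summing over the $N^h = 1/h$ bins yields $C^2 h^2$ for the bias and $2\sigma^2/(Lh)$ for the variance.

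For the empty-bin piece, on $\{N_n = 0\}$ we have $\hat{s}_n = 0$ and the integrand is bounded by $\|s^R\|_{L^\infty}^2$, so the contribution per bin is at most $h\|s^R\|_{L^\infty}^2\, \PP(N_n = 0) = h\|s^R\|_{L^\infty}^2 (1-h)^L$. Using $(1-h)^L \leq e^{-Lh} \leq 1/(Lh)$ (since $xe^{-x} \leq 1$ for $x > 0$), summing over the $1/h$ bins caps the total empty-bin contribution at $\|s^R\|_{L^\infty}^2/(Lh)$. Combining the three pieces produces the asserted bound with a suitable universal constant $c_0$.

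The only nonroutine points will be supplying the sharp binomial inequality $\E[\mathcal{X}_{\{N_n > 0\}}/N_n] = O(1/(Lh))$ and the empty-cell estimate $(1-h)^L = O(1/(Lh))$ in the right form; both are classical one-line inequalities spelled out in \cite{regressionBook}, after which the remainder of the argument is pure bookkeeping. Note that the proof is purely distribution-free in $Y \mid X$, so no use is made of the periodicity or Wiener-algebra structure of the underlying $s_k$'s: those enter only later, through the special structure of the noise process $\kappa_k$ that drives the recursive scheme.
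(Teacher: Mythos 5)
The paper does not prove this theorem at all: it is quoted verbatim from Chapter 4 of the cited regression monograph (the partitioning-estimate risk bound, Theorem 4.3 there), and your argument is essentially that textbook proof — per-cell bias--variance decomposition conditional on the design, the inverse binomial moment bound $\E[\mathcal{X}_{\{N_n>0\}}/N_n]\le 2/((L+1)p_n)$, and the empty-cell term handled via $(1-p_n)^L\le 1/(Lp_n)$ — so the route is the same and the bookkeeping you describe does yield $c_0(\sigma^2+\|s^R\|_{L^\infty}^2)/(Lh)+C^2h^2$.

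One point deserves care: you integrate the per-cell errors against Lebesgue measure and then set $p_n=\PP(X\in I_n)=h$ in both the binomial bound and the empty-cell probability, which silently assumes $X$ is uniform on $[0,1)$. The theorem as stated only assumes compact support, and with the risk taken in Lebesgue $L^2$ the bound can fail for non-uniform $X$ (a cell with vanishing mass contributes a non-decaying error of size $h\|s^R\|_{L^\infty}^2$). Either state the risk as $\E\int|s^P_L-s^R|^2\,\mu(dx)$ with $\mu$ the law of $X$, in which case each cell contributes $\mu(I_n)\cdot 2\sigma^2/((L+1)\mu(I_n))=2\sigma^2/(L+1)$ and no uniformity is needed (this is how the source states it), or add the uniformity of $X$ explicitly — which is harmless here, since in the RDBR application the folded samples are modeled as uniform on $[0,1)$.
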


Other regression methods would also be suitable for the analysis of the RDBR and might lead to better convergence rate than the one in Theorem \ref{thm:reg}. For the sake of simplicity, we only focus on the analysis based on Theorem \ref{thm:reg}. To simplify notations, $s^P$ will be used instead of $s^P_L$ and $s\in \LL$ means that $s$ is a Lipschitz continuous function with a constant $C$ later on. 

Denote the set of sampling grid points $\{t_\ell\}_{\ell=1,\dots,L}$ in Algorithm \ref{alg:pcg} as $\T$. To estimate the regression function using the partition-based regression method, $\T$ is divided into several subsets as follows. For $i,j=1,\dots,K$, $i\neq j$, $m,n=0,\dots,N^h-1$, let 
\[
\T^{ij}_h(m,n)=\left\{ t\in \T: \mod(p_i(t),1)\in[t^h_m,t^h_m+h),\mod(p_j(t),1)\in[t^h_n,t^h_n+h)\right\},
\]
and
\[
\T^{i}_h(m)=\left\{ t\in \T: \mod(p_i(t),1)\in[t^h_m,t^h_m+h)\right\},
\]
then $\T=\cup_{m=0}^{N^h-1} \T^{i}_h(m)=\cup_{m=0}^{N^h-1}\cup_{n=0}^{N^h-1} \T^{ij}_h(m,n)$. 
Let 
\begin{equation}
\label{eqn:D}
D^{ij}_h(m,n)\quad \text{ and }\quad D^{i}_h(m)
\end{equation}
 denote the number of points in  $\T^{ij}_h(m,n)$ and $\T^{i}_h(m)$, respectively. 

\begin{definition}
  \label{def:wd}
  Suppose $f_k(t)=\alpha_k(t)s_k(2\pi N_k \phi_k(t))$ is a GIMT of type $(M,N_k)$ for $k=1,\dots,K$. Then the collection of phase functions $\{p_k(t)\}_{1\leq k\leq K}$ is said to be $(M,N,K,h,\beta,\gamma)$-well-differentiated and denoted as $\{p_k(t)\}_{1\leq k\leq K}\in\WD(M,N,K,h,\beta,\gamma)$, if the following conditions are satisfied:
\begin{enumerate}
\item $N_k\geq N$ for $k=1,\dots,K$;
\item $  \gamma\defeq  \underset{m,n,i\neq j }{\min}D^{ij}_h(m,n)$ satisfies $\gamma>0$ 
, where $D^{ij}_h(m,n)$ (and $D^{i}_h(m)$  below) is defined in \eqref{eqn:D};
\item Let \[\beta_{i,j} \defeq \left(\sum_{m=0}^{N^h-1} \frac{1}{D^{i}_h(m) }\left( \sum_{n=0}^{N^h-1} ( D^{ij}_h(m,n)-\gamma )^2 \right)\right)^{1/2}\] for all $i\neq j$, then $\beta\defeq \max\{\beta_{i,j}:i\neq j\}$ satisfies $M^2(K-1)\beta<1$.
\end{enumerate}
\end{definition}


In the above definition, $\gamma$ quantifies the dissimilarity between phase functions. The larger $\gamma$ is, the more dissimilarity phase functions have. If two phases are very similar, there might be some nearly empty sets $\T^{ij}_h(m,n)$ and hence $\gamma$ is small.  If $\gamma$ is larger, the numbers $\{D^{ij}_h(m,n)\}_{m,n}$ are closer and $\beta$ would be smaller. To guarantee a large $\gamma$, $N$ and $L$ should be sufficiently large. With these notations defined, it is ready to present the main analysis of the RDBR. 

Let's recall that in each iteration of Algorithm \ref{alg:pcg}, if we denote the target shape function as $s_k^{(j)} $ then the given data is 
\begin{equation}
\label{eqn:r}
r^{(j)}=\sum_{k=1}^K \alpha_k(t) s^{(j)}_k(2 \pi p_k(t)).
\end{equation}
In the regression problem 
\begin{eqnarray}
s_k^{R,(j+1)} &=& \underset{s:\mathbb{R}\rightarrow \mathbb{R}}{\arg\min}\quad \E\{\left| s(2\pi X_k)-Y_k^{(j)} \right|^2\}\\
&=&  \underset{s:\mathbb{R}\rightarrow \mathbb{R}}{\arg\min}\quad \E\{\left| s(2\pi X_k)-(Y_k^{(j)} -s_k^{(j)}(2\pi X_k))\right|^2\}-s_k^{(j)},
\label{eqn:regthm}
\end{eqnarray}
we have 
\[
s_k^{R,(j+1)} = s_k^{(j)} + s^{E,(j)}_k,
\]
where
\begin{equation}
\label{eqn:regthm1}
s^{E,(j)}_k(2\pi x)\defeq \E\{Y_k^{(j)} -s_k^{(j)}(2\pi X_k)|X_k=x\}\neq 0
\end{equation}
due to the perturbation caused by other modes. In the next iteration, the target shape function $s^{(j+1)}_k= - s^{E,(j)}_k$. Hence, the key convergence analysis is to show that $s^{E,(j)}_k$ decays as $j\rightarrow \infty$. 

Remark that the partition-based regression method is only used to provide a decay rate. In the analysis, we assume that all regression problems are solved exactly, i.e., $\bar{s}^{(j+1)}_k=s^{R,(j+1)}_k$ in Equation \eqref{eqn:reg}. 

In what follows, we assume that an accuracy parameter $\epsilon$ is fixed. Furthermore, suppose the given $K$ GIMT's $f_k(t)=\alpha_k(t)s_k(2\pi N_k \phi_k(t))$, $k=1,\dots,K$, have phases in $\WD(M,N,K,h,\beta,\gamma)$, all generalized shape functions and amplitude functions are in the space $\LL$. Under these conditions, all regression functions $s^{(j)}_k\in\LL$ and have bounded $L^\infty$ norm depending only on $M$ and $K$. By Line $8$ in Algorithm \ref{alg:pcg}, we have the nice and key condition that $\int_0^1 s^{(j)}_k(2\pi t)dt=0$ at each iteration for all $k$ and $j$. Note that $\Var(Y_k^{(j)}|X_k=x)$ is bounded by a constant depending only on $M$ and $K$ as well. For the fixed $\epsilon$ and $C$, there exists $h_0(\epsilon,M,K,C)$ such that $C^2h^2<\epsilon^2$ if $0<h<h_0$. By the abuse of notation, $O(\epsilon)$ is used instead of $Ch$ later. By Theorem \ref{thm:reg}, for the fixed $\epsilon$, $M$, $K$, $C$, and $h$, there exists $L_0(\epsilon,M,K,C,h)$ such that the $L^2$ error of the partition-based regression is bounded by $\epsilon^2$. In what follows, $h$ is smaller than $h_0$, $L$ is larger than $L_0$, and hence all estimated regression functions approximate the ground truth regression function with an $L^2$ error of order $\epsilon$. Under these conditions and assumptions,  $s^{E,(j)}_k$ is shown to decay to $O(\epsilon)$ as $j\rightarrow \infty$ and the decay rate will be estimated.

\begin{lemma}
\label{lem:SP}
Under the conditions listed in the paragraph immediately preceding this lemma, for the given $\epsilon$,the estimated regression function $s^{P,(j)}_k$ of the regression problem in \eqref{eqn:regthm} by the partition-based regression method satisfies
\[
\|s^{P,(j)}_k\|_{L^2}\leq O(\epsilon)+M^2(K-1)\beta  \underset{1\leq k\leq K}{\max} \|s_k^{(j)}\|_{L^2}
\]
for all $k=1,\dots,K$ and $j$.
\end{lemma}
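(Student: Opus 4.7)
The plan is to write the partition-based estimator $s^{P,(j)}_k$ explicitly and then exploit the well-differentiated structure of the phase functions together with the zero-mean property of the residual shapes $s^{(j)}_{k'}$. Interpreting \eqref{eqn:regthm} so that $s^{P,(j)}_k$ is the partition-based regression of the centered response $Y^{(j)}_k-s^{(j)}_k(2\pi X_k)$ (whose population regressor is the cross-talk $s^{E,(j)}_k$), its value on the bin $x\in[t^h_m,t^h_m+h)$ is
\begin{equation*}
s^{P,(j)}_k(2\pi x)=\frac{1}{D^k_h(m)}\sum_{k'\neq k}\sum_{n=0}^{N^h-1}\ \sum_{t_\ell\in \T^{k,k'}_h(m,n)}\frac{\alpha_{k'}(t_\ell)}{\alpha_k(t_\ell)}\,s^{(j)}_{k'}\bigl(2\pi p_{k'}(t_\ell)\bigr),
\end{equation*}
where for each $k'\neq k$ we have refined $\T^k_h(m)=\bigcup_n \T^{k,k'}_h(m,n)$.

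I would process each $k'\neq k$ summand separately using two local approximations. First, Lipschitz continuity of $s^{(j)}_{k'}\in\LL$ lets us replace $s^{(j)}_{k'}(2\pi p_{k'}(t_\ell))$ by $s^{(j)}_{k'}(2\pi t^h_n)$ for every $t_\ell\in \T^{k,k'}_h(m,n)$, at cost $O(h)=O(\epsilon)$. Second, the amplitude ratios are uniformly bounded by $M^2$. Setting
\begin{equation*}
B_{k,k'}(m,n)\defeq\sum_{t_\ell\in \T^{k,k'}_h(m,n)}\frac{\alpha_{k'}(t_\ell)}{\alpha_k(t_\ell)},\qquad 0\le B_{k,k'}(m,n)\le M^2 D^{k,k'}_h(m,n),
\end{equation*}
the $k'$-contribution to bin $m$ becomes $T_{k',m}=\tfrac{1}{D^k_h(m)}\sum_n B_{k,k'}(m,n)\,s^{(j)}_{k'}(2\pi t^h_n)+O(\epsilon)$.

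The core of the argument is the decomposition $D^{k,k'}_h(m,n)=\gamma+(D^{k,k'}_h(m,n)-\gamma)$, which splits $T_{k',m}$ into a baseline piece dominated by $\tfrac{M^2\gamma}{D^k_h(m)}\bigl|\sum_n s^{(j)}_{k'}(2\pi t^h_n)\bigr|$ and a deviation piece dominated by $\tfrac{M^2}{D^k_h(m)}\bigl|\sum_n(D^{k,k'}_h(m,n)-\gamma)s^{(j)}_{k'}(2\pi t^h_n)\bigr|$. The baseline is $O(\epsilon)$ for two reasons: the prefactor satisfies $\gamma/D^k_h(m)\le 1/N^h=h$, because $D^k_h(m)=\sum_n D^{k,k'}_h(m,n)\ge N^h\gamma$; and $s^{(j)}_{k'}$ has zero mean (enforced by line 8 of Algorithm \ref{alg:pcg}), so the Riemann sum $h\sum_n s^{(j)}_{k'}(2\pi t^h_n)=\int_0^1 s^{(j)}_{k'}(2\pi t)\,dt+O(h)=O(h)$, giving $\sum_n s^{(j)}_{k'}(2\pi t^h_n)=O(1)$. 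The deviation piece is controlled by Cauchy--Schwarz in $n$ combined with the Riemann-sum estimate $\sum_n s^{(j)}_{k'}(2\pi t^h_n)^2=\|s^{(j)}_{k'}\|^2_{L^2}/h+O(1)$; squaring $T_{k',m}$, weighting by $h$, and summing over $m$, the factor $D^k_h(m)^{-1}$ supplied by Cauchy--Schwarz combines with the outer summation weight to reproduce exactly $\beta^2_{k,k'}=\sum_m D^k_h(m)^{-1}\sum_n(D^{k,k'}_h(m,n)-\gamma)^2$ from Definition \ref{def:wd}, yielding $\|T_{k',\cdot}\|_{L^2}\lesssim M^2\beta_{k,k'}\|s^{(j)}_{k'}\|_{L^2}+O(\epsilon)$. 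A triangle inequality over $k'\neq k$ with $\beta_{k,k'}\le\beta$ then assembles the claim $\|s^{P,(j)}_k\|_{L^2}\le O(\epsilon)+M^2(K-1)\beta\max_{1\le k\le K}\|s^{(j)}_k\|_{L^2}$.

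The main obstacle is the non-constant amplitude ratio $\alpha_{k'}(t)/\alpha_k(t)$: since the samples inside a single joint bin $\T^{k,k'}_h(m,n)$ are scattered across $[0,1]$ in the $t$-variable, Lipschitz continuity of the $\alpha$'s in $t$ cannot be used to pin $B_{k,k'}(m,n)$ to a clean multiple of $D^{k,k'}_h(m,n)$. I would sidestep this by keeping $M^2$ as a blanket prefactor and relying on cancellation solely in the $s^{(j)}_{k'}(2\pi t^h_n)$ factor, which is shared across all samples of the joint bin. A secondary technicality is matching the Cauchy--Schwarz weights exactly to the combinatorial form of $\beta^2_{k,k'}$; this is handled by bookkeeping the Riemann-sum identities and by interchanging the $(m,n)$ summations at the end of the $L^2$ estimate.
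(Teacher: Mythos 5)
Your argument for the constant-amplitude case is exactly the paper's: write the partition estimator bin by bin, replace $s^{(j)}_{k'}(2\pi p_{k'}(t_\ell))$ by $s^{(j)}_{k'}(2\pi t^h_n)$ at cost $O(\epsilon)$, split $D^{k,k'}_h(m,n)=\gamma+(D^{k,k'}_h(m,n)-\gamma)$, kill the $\gamma$-part using $D^k_h(m)\geq \gamma/h$ together with the zero mean of $s^{(j)}_{k'}$ (Line 8 of Algorithm \ref{alg:pcg}), and control the deviation part by Cauchy--Schwarz so that $\beta_{k,k'}$ from Definition \ref{def:wd} appears; summing over $k'\neq k$ gives $(K-1)\beta$. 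Up to that point the proposal is sound.

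The gap is in your treatment of non-constant amplitudes, which is precisely the step you flag as ``the main obstacle'' and then try to sidestep. Keeping $M^2$ as a blanket prefactor is incompatible with relying on the cancellation in $\sum_n s^{(j)}_{k'}(2\pi t^h_n)$: writing $B_{k,k'}(m,n)=c(m,n)D^{k,k'}_h(m,n)$ with $c(m,n)\in[1/M^2,M^2]$, the baseline piece of $T_{k',m}$ is $\frac{\gamma}{D^k_h(m)}\sum_n c(m,n)\,s^{(j)}_{k'}(2\pi t^h_n)$, and the bound $|\sum_n c(m,n)s^{(j)}_{k'}(2\pi t^h_n)|\leq M^2|\sum_n s^{(j)}_{k'}(2\pi t^h_n)|$ is simply false when $s^{(j)}_{k'}$ changes sign (take $c=M^2$ where $s^{(j)}_{k'}>0$ and $c=1/M^2$ where $s^{(j)}_{k'}<0$): the $n$-dependent weights destroy the zero-mean cancellation, and the baseline can then be of size $\gamma h^{-1}\|s^{(j)}_{k'}\|_{L^1}/D^k_h(m)=O(\|s^{(j)}_{k'}\|_{L^2})$ rather than $O(\epsilon)$. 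This would add a term of order $(K-1)\max_k\|s^{(j)}_k\|_{L^2}$ without the contraction factor $\beta$, which ruins the lemma and hence the geometric decay in Theorem \ref{thm:conv}. (Your deviation piece is fine: Cauchy--Schwarz tolerates the variable weights and absorbs $M^2$.) The paper avoids this by a different device: since the instantaneous frequencies are large, the amplitudes are nearly constant on sufficiently short time intervals, so it splits $[0,1]$ into such intervals, runs the constant-amplitude argument within each group --- where the ratio $\alpha_{k'}\circ p_k^{-1}/\alpha_k\circ p_k^{-1}$ is an essentially $n$-independent constant bounded by $M^2$, preserving the cancellation --- and then takes a weighted average of the per-group bounds. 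To repair your proof you need this (or an equivalent) localization in $t$; the blanket $M^2$ bound alone cannot deliver the baseline estimate.
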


We would like to emphasize that $s^{P,(j)}_k$ is only used in the analysis and it is not computed in Algorithm \ref{alg:pcg}. 

\begin{proof}
First, we start with the case when $K=2$ and $\alpha_k(t)=1$ for all $t$ and $k$.

Recall that $p_k(t)$ can be considered as a diffeomorphism from $\R$ to $\R$ transforming data in the $t$ domain to the $p_k(t)$ domain. We have introduced the inverse-warping data
\begin{eqnarray*}
h_k^{(j)}(v)& =& r^{(j)}\circ p_k^{-1}(v)\\
&=&s_k^{(j)}(2\pi v)+ \sum_{j\neq k}  s_j^{(j)}(2\pi   p_j\circ p_k^{-1}(v))\\
&\defeq & s_k^{(j)}(2\pi v)+ \kappa_k^{(j)}(v),
\end{eqnarray*}
where $v = p_k(t)$. After the folding map
\begin{eqnarray*}
\tau: \ \      \left(v, h_k(v) \right)   \mapsto    \left(\text{mod}(v,1),  h_k^{(j)}(v) \right),
\end{eqnarray*}
we have $(x_\ell,y_\ell)=\tau(v_\ell,s_k^{(j)}(2\pi v_\ell)+\kappa_k^{(j)}(v_\ell))$ for $\ell =1,\dots,L$ as $L$ i.i.d. samples of a random vector $(X_k,Y_k^{(j)})$, where $X_k\in[0,1]$. We can assume the target shape functions $s^{(j)}_k$ for all $k$ at the $j$th step are known in the analysis, although they are not known in practice. The partition-based regression method is applied (not necessary to know the distribution of the random vector $(X_k,Y_k^{(j)})$) to solve the following regression problem approximately
\[
 \underset{s:\mathbb{R}\rightarrow \mathbb{R}}{\arg\min}\quad \E\{\left| s(2\pi X_k)-(Y_k^{(j)} -s_k^{(j)}(2\pi X_k))\right|^2\},
 \]
 and the solution is denoted as $s^{P,(j)}_k$. Recall notations in Definition \ref{def:wd}. By the partition-based regression method, when $x\in[t^h_m,t^h_m+h)$,
 \[
 s^{P,(j)}_k(x)=\frac{\sum_{n=0}^{N^h-1}\left( s^{(j)}_i(2\pi t^h_n)+O(\epsilon)\right)D^{ki}_h(m,n)}{D^k_h(m)},
 \]
 where $i=1$ if $k=2$ or $i=2$ if $k=1$, $O(\epsilon)$ comes from the approximation of the $\LL$ function $s_i$ using the values on grid points $t^h_n$. Hence,
  \begin{eqnarray*}
 |s^{P,(j)}_k(x)|&\leq &O(\epsilon)+\frac{\sum_{n=0}^{N^h-1}s^{(j)}_i(2\pi t^h_n) D^{ki}_h(m,n)}{D^k_h(m)}\\
 &=&O(\epsilon)+\frac{\sum_{n=0}^{N^h-1}s^{(j)}_i(2\pi t^h_n) \left(D^{ki}_h(m,n)-\gamma\right)}{D^k_h(m)}+ \frac{ \gamma\sum_{n=0}^{N^h-1}s^{(j)}_i(2\pi t^h_n)}{D^k_h(m)}. 
 \end{eqnarray*}
 Since $s^{(j)}_i\in\LL$ and $\int_0^1s^{(j)}_i(2\pi t)dt=0$, $|\sum_{n=0}^{N^h-1}s^{(j)}_i(2\pi t^h_n)|\leq C$. Note that $D^k_h(m)\geq N^h\gamma=\gamma/h$. Hence,
 \[
 \left| \frac{ \gamma\sum_{n=0}^{N^h-1}s^{(j)}_i(2\pi t^h_n)}{D^k_h(m)}\right|\leq O(\epsilon),
 \]
 and
 \[
  |s^{P,(j)}_k(x)|\leq O(\epsilon)+\frac{\sum_{n=0}^{N^h-1}s^{(j)}_i(2\pi t^h_n) \left(D^{ki}_h(m,n)-\gamma\right)}{D^k_h(m)}.
 \]
 By the triangle inequality, 
 \begin{eqnarray*}
 \|s^{P,(j)}_k\|_{L^2}&\leq& O(\epsilon) +\left( \sum_{m=0}^{N^h-1} \left( \frac{\sum_{n=0}^{N^h-1}s^{(j)}_i(2\pi t^h_n) \left(D^{ki}_h(m,n)-\gamma\right)}{D^k_h(m)} \right)^2 h \right)^{1/2}\\
 &\leq & O(\epsilon) +\left( \sum_{m=0}^{N^h-1}\left( \sum_{n=0}^{N^h-1}\left(s^{(j)}_i(2\pi t^h_n) \right)^2\right) \left(\sum_{n=0}^{N^h-1}\left(  \frac{ D^{ki}_h(m,n)-\gamma}{D^k_h(m)} \right)^2\right) h \right)^{1/2}\\
 &=&O(\epsilon) +\left( \sum_{n=0}^{N^h-1}\left(s^{(j)}_i(2\pi t^h_n) \right)^2h \right) ^{1/2}\left( \sum_{m=0}^{N^h-1}\left(\sum_{n=0}^{N^h-1}\left(  \frac{ D^{ki}_h(m,n)-\gamma}{D^k_h(m)} \right)^2\right)  \right)^{1/2},
 \end{eqnarray*}
 where the second inequality comes from H{\"o}lder's inequality. Since $s^{(j)}_i\in\LL$, 
 \[
 \left( \sum_{n=0}^{N^h-1}\left(s^{(j)}_i(2\pi t^h_n) \right)^2h \right) ^{1/2}=\left(\|s^{(j)}_i\|^2_{L^2}+O(\epsilon)\right)^{1/2}=\|s^{(j)}_i\|_{L^2}+O(\epsilon).
 \]
 Since phase functions are in $\WD(M,N,h,\beta,\gamma)$, 
 \[
 \left( \sum_{m=0}^{N^h-1}\left(\sum_{n=0}^{N^h-1}\left(  \frac{ D^{ki}_h(m,n)-\gamma}{D^k_h(m)} \right)^2\right)  \right)^{1/2}\leq \beta<1.
 \]
 Hence,
 \[
  \|s^{P,(j)}_k\|_{L^2}\leq O(\epsilon)+\beta \|s^{(j)}_i\|^2_{L^2}\leq O(\epsilon)+ \beta \underset{1\leq k\leq K}{\max} \|s_k^{(j)}\|_{L^2}.
 \]
 
To care the general case, we need to extend the argument to $K>2$ and non-constant $\alpha_k$. We shall do this in two steps: first $K>2$ but $\alpha_k\equiv 1$ for all $k$, and then, finally, $K>2$ and varying $\alpha_k$. Rather than repeating the earlier argument in full detail, adapted to these more general situations, we indicate simply, for both steps, what extra estimates need to be taken into account. This may not give the sharpest estimate, but this is not a concern for now.
 
 Next, we prove the case when $K>2$ and $\alpha_k(t)=1$ for all $t$ and $k$. Similarly, by the definition of the partition-based regression and the triangle inequality, we have
 \[
  |s^{P,(j)}_k(x)|\leq O(K\epsilon)+ \sum_{i\neq k} \lambda_i \frac{\sum_{n=0}^{N^h-1}s^{(j)}_i(2\pi t^h_n) \left(D^{ki}_h(m,n)-\gamma\right)}{D^k_h(m)}.
 \]
 Hence, by the triangle inequality and the H{\"o}lder inequality again, it holds that
  \begin{eqnarray*}
 \|s^{P,(j)}_k\|_{L^2}&\leq& O(K\epsilon) +  \sum_{i\neq k}  \left( \sum_{m=0}^{N^h-1} \left( \frac{\sum_{n=0}^{N^h-1}s^{(j)}_i(2\pi t^h_n) \left(D^{ki}_h(m,n)-\gamma\right)}{D^k_h(m)} \right)^2 h \right)^{1/2}\\
 &\leq &O(K\epsilon) + \sum_{i\neq k}  \left( \sum_{n=0}^{N^h-1}\left(s^{(j)}_i(2\pi t^h_n) \right)^2h \right) ^{1/2}\left( \sum_{m=0}^{N^h-1}\left(\sum_{n=0}^{N^h-1}\left(  \frac{ D^{ki}_h(m,n)-\gamma}{D^k_h(m)} \right)^2\right)  \right)^{1/2}\\
  &\leq &O(K\epsilon) + \sum_{i\neq k}\beta \|s^{(j)}_i\|_{L^2}\\
  &\leq & O(K\epsilon) + (K-1) \beta  \underset{1\leq k\leq K}{\max} \|s_k^{(j)}\|_{L^2}\\
  &=& O(\epsilon) +(K-1) \beta  \underset{1\leq k\leq K}{\max} \|s_k^{(j)}\|_{L^2}.
 \end{eqnarray*}
 
 Finally, we prove the case when amplitude functions are smooth functions but not a constant $1$. If the instantaneous frequencies are sufficiently large, depending on $\epsilon$, $M$, $K$, and $C$, amplitude functions are nearly constant up to an approximation error of order $\epsilon$. The time domain $[0,1]$ is divided into sufficiently small intervals such that amplitude functions are nearly constant inside each interval. Accordingly, the samples $(x_\ell,y_\ell)=\tau(v_\ell,s_k(2\pi v_\ell)+\kappa_k(v_\ell))$ for $\ell =1,\dots,L$ of the random vector $(X_k,Y_k^{(j)})$ is divided into groups and the partition-based regression method is applied to estimate the regression function for each group. This is similar to data splitting in nonparametric regression. The bound of $ |s^{P,(j)}_k(x)|$ is a weighted average of the bound given by each group, and the weight comes the number of points in each group over the total number of samples. Note that $\|\alpha_k\|_{L^\infty}\leq M$. By repeating the analysis above, it is simple to show 
 \[
\|s^{P,(j)}_k\|_{L^2}\leq O(\epsilon)+M^2(K-1)\beta  \underset{1\leq k\leq K}{\max} \|s_k^{(j)}\|_{L^2},
\]
where $M^2$ comes from 
\[
 \frac{\alpha_i\circ p_k^{-1}(v)}{\alpha_k\circ p_k^{-1}(v)}
\]
in $\kappa_k(v)$ after warping.
\end{proof}

\begin{lemma}
\label{lem:SE}
Under the conditions in Lemma \ref{lem:SP}, $s^{E,(j)}_k$ in  \eqref{eqn:regthm1} satisfies
\[
\|s^{E,(j)}_k\|_{L^2}\leq O(\epsilon)+M^2(K-1)\beta \underset{1\leq k\leq K}{\max} \|s_k^{(j)}\|_{L^2}
\]
for all $k=1,\dots,K$ and $j$.
\end{lemma}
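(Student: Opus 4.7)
The key observation is that Lemma \ref{lem:SP} already provides the desired bound for the \emph{partition-based regression estimator} $s^{P,(j)}_k$, whereas the target $s^{E,(j)}_k$ is the \emph{true} conditional-expectation regression function solving the problem in \eqref{eqn:regthm}. So the plan is simply to control the gap between these two via Theorem \ref{thm:reg} and then invoke Lemma \ref{lem:SP} through the triangle inequality.

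Concretely, I would first verify that the hypotheses of Theorem \ref{thm:reg} apply to the regression problem
\[
s^{E,(j)}_k = \underset{s}{\arg\min}\ \E\{|s(2\pi X_k)-(Y^{(j)}_k-s^{(j)}_k(2\pi X_k))|^2\}.
\]
The random variable $X_k$ has compact support $[0,1)$ by construction of the folding map $\tau$; the conditional variance $\Var(Y^{(j)}_k-s^{(j)}_k(2\pi X_k)|X_k=x)$ is bounded by a constant depending only on $M$ and $K$, as noted in the paragraph preceding Lemma \ref{lem:SP}; and $s^{E,(j)}_k$, being built from the Lipschitz shape and amplitude functions composed with the diffeomorphisms $p_j\circ p_k^{-1}$, is again Lipschitz with constant controlled by $M,K,C$. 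Hence Theorem \ref{thm:reg} gives
\[
\E\|s^{P,(j)}_k-s^{E,(j)}_k\|^2_{L^2} \ \leq\ c_0\,\frac{\sigma^2+\|s^{E,(j)}_k\|^2_{L^\infty}}{Lh}+C^2h^2.
\]

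Next, I would use the standing choice of parameters in the paragraph immediately preceding Lemma \ref{lem:SP}: $h<h_0$ so that $C^2h^2<\epsilon^2$, and $L>L_0$ so that the first term on the right is also bounded by $\epsilon^2$. This yields $\|s^{P,(j)}_k-s^{E,(j)}_k\|_{L^2}=O(\epsilon)$. Then by the triangle inequality,
\[
\|s^{E,(j)}_k\|_{L^2}\ \leq\ \|s^{P,(j)}_k\|_{L^2}+\|s^{P,(j)}_k-s^{E,(j)}_k\|_{L^2}\ \leq\ \|s^{P,(j)}_k\|_{L^2}+O(\epsilon),
\]
and applying Lemma \ref{lem:SP} to the first term on the right gives
\[
\|s^{E,(j)}_k\|_{L^2}\ \leq\ O(\epsilon)+M^2(K-1)\beta\,\underset{1\leq k\leq K}{\max}\|s^{(j)}_k\|_{L^2},
\]
which is the claim.

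The only step that requires any care is the verification of the Lipschitz constant for $s^{E,(j)}_k$ uniformly in the iteration index $j$, since $s^{(j)}_k$ itself changes with $j$. This is handled by the remark that all iterates $s^{(j)}_k$ lie in $\LL$ with $L^\infty$ norm bounded by a constant depending only on $M$ and $K$, so the implicit constants in $O(\epsilon)$ remain uniform in $j$. Apart from this bookkeeping, the proof is essentially one line: apply Theorem \ref{thm:reg} to pass from the computable estimator $s^{P,(j)}_k$ to the ideal regression function $s^{E,(j)}_k$, losing only an $O(\epsilon)$ term, and then quote Lemma \ref{lem:SP}.
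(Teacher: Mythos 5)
Your proposal is correct and follows essentially the same route as the paper: the paper's proof is exactly the triangle inequality $\|s^{E,(j)}_k\|_{L^2}\leq \|s^{E,(j)}_k-s^{P,(j)}_k\|_{L^2}+\|s^{P,(j)}_k\|_{L^2}$, with the first term bounded by $O(\epsilon)$ via Theorem \ref{thm:reg} under the standing parameter choices and the second by Lemma \ref{lem:SP}. Your additional verification of the hypotheses of Theorem \ref{thm:reg} and of uniformity in $j$ is just a more explicit account of what the paper leaves implicit.
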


\begin{proof}
Let $s^{P,(j)}_k$ be the estimated regression function constructed in Lemma \ref{lem:SP}, then
\begin{eqnarray*}
\|s^{E,(j)}_k\|_{L^2}&\leq &\|s^{E,(j)}_k-s^{P,(j)}_k\|_{L^2}+\|s^{P,(j)}_k\|_{L^2}\\
&\leq &O(\epsilon)+M^2(K-1)\beta \underset{1\leq k\leq K}{\max} \|s_k^{(j)}\|_{L^2},
\end{eqnarray*}
by Theorem \ref{thm:reg} and Lemma \ref{lem:SP}.
\end{proof}


\begin{theorem}
\label{thm:conv} (Convergence of the RDBR) 
Under the conditions in Lemma \ref{lem:SP}, we have  
\[
\underset{1\leq k\leq K}{\max} \|s^{E,(j)}_k\|_{L^2}\leq O(c\epsilon+\left(M^2(K-1)\beta\right)^{j+1}),
\]
and
\[
\|r^{(j)}\|_{L^2}\leq O(c\epsilon+\left(M^2(K-1)\beta\right)^{j}),
\]
where $c=\frac{1}{1-M^2(K-1)\beta}$ is a finite number, $s^{E,(j)}_k$  is defined in Equation \eqref{eqn:regthm1} and $r^{(j)}$ is defined in Equation \eqref{eqn:r}.
\end{theorem}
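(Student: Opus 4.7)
The plan is to exploit Lemma~\ref{lem:SE} together with the identity $s^{(j+1)}_k=-s^{E,(j)}_k$ (stated in the discussion just before Lemma~\ref{lem:SP}) in order to turn one RDBR iteration into a contraction in the maximal $L^2$ norm of the target shape functions. Writing $A_j\defeq\max_{1\leq k\leq K}\|s^{(j)}_k\|_{L^2}$ and $\rho\defeq M^2(K-1)\beta$, Lemma~\ref{lem:SE} immediately reads
\[
A_{j+1}\;=\;\max_{1\leq k\leq K}\|s^{E,(j)}_k\|_{L^2}\;\leq\;O(\epsilon)+\rho A_j.
\]
The hypothesis $M^2(K-1)\beta<1$ built into Definition~\ref{def:wd} is precisely what makes this recurrence contractive; it is the single quantitative ingredient that drives the whole theorem.

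Given the recurrence, I would iterate: $A_{j+1}\leq O(\epsilon)\sum_{i=0}^{j}\rho^i+\rho^{j+1}A_0\leq O(c\epsilon)+\rho^{j+1}A_0$ with $c=1/(1-\rho)$ finite. Because the setup preceding Lemma~\ref{lem:SP} guarantees that every iterate $s^{(j)}_k$ inherits a uniform $L^\infty$ bound depending only on $M$ and $K$, $A_0$ is itself an $O(1)$ quantity and absorbs cleanly into the implicit constants. This yields the first claim $\max_k\|s^{E,(j)}_k\|_{L^2}=A_{j+1}\leq O\bigl(c\epsilon+\rho^{j+1}\bigr)$.

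For the residual bound, I would return to the defining formula $r^{(j)}(t)=\sum_{k=1}^K\alpha_k(t)s^{(j)}_k(2\pi p_k(t))$ from \eqref{eqn:r} and apply the triangle inequality termwise. For each $k$ the change of variables $v=p_k(t)$, combined with the periodicity of $s^{(j)}_k$, the derivative bounds $1/M\leq|\phi'_k|\leq M$, and $\|\alpha_k\|_{L^\infty}\leq M$, yields $\|\alpha_k\,s^{(j)}_k(2\pi p_k(\cdot))\|_{L^2([0,1])}\lesssim\|s^{(j)}_k\|_{L^2}$ with an $M$-dependent constant (the $1/p'_k$ Jacobian factor cancels against the $\sim N_k$ periods of $s^{(j)}_k$ swept out over $[0,1]$). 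Summing over $k$ and invoking the bound just established at index $j$ then gives $\|r^{(j)}\|_{L^2}\lesssim A_j\leq O(c\epsilon+\rho^j)$, which is the second claim.

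I expect no serious obstacle here: once Lemma~\ref{lem:SE} is in place the heart of the argument is the one-line contraction estimate, and everything else is an elementary geometric-series manipulation plus change-of-variable bookkeeping. The only point that deserves some care is verifying that the Jacobian contribution from $v=p_k(t)$ and the initial value $A_0$ both absorb into constants depending only on $M$ and $K$, which the GIMT hypotheses in Definition~\ref{def:GIMTF}, the definition of $\mathcal{S}_M$ in Definition~\ref{def:GSF}, and the setup immediately preceding Lemma~\ref{lem:SP} together guarantee.
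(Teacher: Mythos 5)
Your proposal is correct and takes essentially the same route as the paper: both combine Lemma~\ref{lem:SE} with the identity $s^{(j+1)}_k=-s^{E,(j)}_k$ to get the contraction $\max_k\|s^{(j+1)}_k\|_{L^2}\leq O(\epsilon)+M^2(K-1)\beta\,\max_k\|s^{(j)}_k\|_{L^2}$, resolve it via the geometric series with $c=1/(1-M^2(K-1)\beta)$ (the paper phrases this as an induction with partial sums $c_2(j)$ rather than unrolling the recurrence), and then bound $\|r^{(j)}\|_{L^2}$ by the triangle inequality using $\|\alpha_k\|_{L^\infty}\leq M$. Your explicit change-of-variables justification that $\|s^{(j)}_k(2\pi p_k(\cdot))\|_{L^2([0,1])}\lesssim\|s^{(j)}_k\|_{L^2}$ is a bookkeeping detail the paper leaves implicit, not a different argument.
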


\begin{proof}
This theorem is an immediate result of Lemma \ref{lem:SE} by induction. Let 
\[
c_1 = M^2(K-1)\beta,
\]
and
\[
c_2(j) = \sum_{n=0}^{j} c_1^n,
\]
then $c_2(j)<\lim_{j\rightarrow \infty}c_2(j)=c$ for all $j$ and $c$ is a finite number because $0<M^2(K-1)\beta<1$ by the assumption. In the initial step when $j=0$, $s^{(j)}_k=s_k$, $\|s_k\|_{L^\infty}\leq M$, and $\|\alpha_k\|_{L^\infty}\leq M$. Hence, by Lemma \ref{lem:SE},
\begin{eqnarray*}
\underset{1\leq k\leq K}{\max} \|s^{E,(0)}_k\|_{L^2}&\leq &O(\epsilon)+M^2(K-1)\beta  \underset{1\leq k\leq K}{\max} \|s_k^{(0)}\|_{L^2}\\
&\leq& O(\epsilon)+M^2(K-1)\beta\sqrt{2\pi}M\\
&=&O(\epsilon c_2(0)+M^2(K-1)\beta),
\end{eqnarray*}
\[
\|r^{(0)}\|_{L^2}\leq \sum_{k=1}^K \| \alpha_k(t) s_k(2 \pi N_k \phi_k(t)) \|_{L^2}\leq KM^2=O(1)
\]
and the conclusion holds. When $j\neq 0$, we have
\begin{equation}
\label{eqn:ind1}
s_k^{(j)} = - s^{E,(j-1)}_k,
\end{equation}
where 
\begin{equation}
\label{eqn:ind2}
\underset{1\leq k\leq K}{\max} \|s^{E,(j-1)}_k\|_{L^2}\leq O(\epsilon c_2(j-1)+\left(M^2(K-1)\beta\right)^{j}).
\end{equation}
By Lemma \ref{lem:SE}, Equation \eqref{eqn:ind1} and \eqref{eqn:ind2},
\begin{eqnarray*}
\|s^{E,(j)}_k\|_{L^2}& \leq & O(\epsilon)+M^2(K-1)\beta \underset{1\leq k\leq K}{\max} \|s_k^{(j)}\|_{L^2}\\
& \leq & O(\epsilon c_2(j)+\left(M^2(K-1)\beta\right)^{(j+1)})
\end{eqnarray*}
for all $k$. Hence, 
\begin{equation*}
\underset{1\leq k\leq K}{\max} \|s^{E,(j)}_k\|_{L^2}\leq O(\epsilon c_2(j)+\left(M^2(K-1)\beta\right)^{j+1}).
\end{equation*}

By \eqref{eqn:ind1} and \eqref{eqn:ind2},
\begin{eqnarray*}
\|r^{(j)}\|_{L^2}&=&\|\sum_{k=1}^K \alpha_k(t) s^{(j)}_k(2 \pi p_k(t))\|_{L^2}\\
&\leq& \sum_{k=1}^K M \| s^{(j)}_k(2 \pi p_k(t))\|_{L^2}\\
&\leq &O(\epsilon c_2(j-1)+\left(M^2(K-1)\beta\right)^{j}).
\end{eqnarray*}
Note that for all $j$ we have $0<c_2(j)<\lim_{j\rightarrow\infty}c_2(j)=c$. Hence, this theorem holds for $j\neq 0$.
\end{proof}

Theorem \ref{thm:conv} shows that the regression function in each step of Algorithm \ref{alg:pcg} decays, if $M^2(K-1)\beta<1$, in the $L^2$ sense up to a fixed accuracy parameter as the iteration number becomes large. Hence, the recovered shape function $\tilde{s}_k$ converges and the residual decays up to a fixed accuracy parameter, if $M^2(K-1)\beta<1$. When the iteration number is sufficiently large, the accuracy of the RDBR in Theorem \ref{thm:conv} is as good as a single step of regression in Theorem \ref{thm:reg}.

\begin{theorem}
\label{thm:ns} (Robustness of the RDBR) 
Let $f_k(t)=\alpha_k(t)s_k(2\pi N_k \phi_k(t))$, $k=1,\dots,K$, be $K$ GIMT's and $f(t)=\sum_{k=1}^Kf_k(t)+n(t)$, where $n(t)$ is a random noise with a bounded variance $\sigma^2$. Under the other conditions introduced in Theorem \ref{thm:conv}, for the given $\epsilon$,  $\exists L_0(\epsilon,M,K,C,h,\sigma)$, if $L>L_0$, then 
\[
\underset{1\leq k\leq K}{\max} \|s^{E,(j)}_k\|_{L^2}\leq O(c\epsilon+\left(M^2(K-1)\beta\right)^{j+1}),
\]
and
\[
\|r^{(j)}\|_{L^2}\leq O(c\epsilon+\left(M^2(K-1)\beta\right)^{j}), 
\]
where $c=\frac{1}{1-M^2(K-1)\beta}$ is a finite number, $s^{E,(j)}_k$  is defined in Equation \eqref{eqn:regthm1} and $r^{(j)}$ is defined in Equation \eqref{eqn:r}.
\end{theorem}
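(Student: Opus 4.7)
The plan is to observe that the presence of noise only enters the analysis through the variance bound in Theorem \ref{thm:reg}, and hence the entire argument of Lemma \ref{lem:SP}, Lemma \ref{lem:SE} and Theorem \ref{thm:conv} carries over once we verify that $\Var(Y_k^{(j)} \mid X_k = x)$ remains bounded by a constant depending only on $M$, $K$ and $\sigma$. Concretely, first I would write the noisy data as $r^{(0)}(t) = \sum_{k=1}^K \alpha_k(t) s_k(2\pi N_k \phi_k(t)) + n(t)$ and trace the noise through the inverse warping: after the map $h_k^{(j)} = (r^{(j)} \circ p_k^{-1})/(\alpha_k \circ p_k^{-1})$, the noise contribution becomes $(n \circ p_k^{-1})/(\alpha_k \circ p_k^{-1})$, whose variance at each fixed $v$ is bounded by $M^2 \sigma^2$ since $1/\alpha_k \le M$. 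Moreover, since subsequent residuals $r^{(j)}$ are formed by subtracting deterministic estimates $\alpha_k(t) \bar s_k^{(j)}(2\pi p_k(t))$, the noise term is never amplified across iterations, so the conditional variance of $Y_k^{(j)}$ stays uniformly bounded by a constant $\tilde\sigma^2 = \tilde\sigma^2(M, K, \sigma)$.

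Next I would reapply Theorem \ref{thm:reg} to the regression problem \eqref{eqn:regthm} at each iteration: the $L^2$ risk of the partition-based estimator is bounded by
\[
c_0\,\frac{\tilde\sigma^2 + \|s^{R,(j)}_k\|_{L^\infty}^2}{Lh} + C^2 h^2.
\]
Choosing $h < h_0(\epsilon, M, K, C)$ as before controls the bias term $C^2h^2$ by $\epsilon^2$, and choosing $L > L_0(\epsilon, M, K, C, h, \sigma)$ large enough so that the variance term is also $\le \epsilon^2$ yields a per-step regression error of order $\epsilon$, exactly as in the noiseless case. Since $L^\infty$-norms of $s_k^{R,(j)}$ remain controlled by $M$, $K$ recursively (inherited from Lemma \ref{lem:SP} and the triangle inequality), a single choice of $L_0$ depending on $\epsilon$, $M$, $K$, $C$, $h$ and $\sigma$ suffices for all iterations.

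With the per-step regression error controlled at $O(\epsilon)$, Lemma \ref{lem:SP} and Lemma \ref{lem:SE} go through verbatim: the triangle inequality bound
\[
\|s_k^{E,(j)}\|_{L^2} \le \|s_k^{E,(j)} - s_k^{P,(j)}\|_{L^2} + \|s_k^{P,(j)}\|_{L^2}
\le O(\epsilon) + M^2(K-1)\beta \max_{1\le k\le K} \|s_k^{(j)}\|_{L^2}
\]
relies only on the regression accuracy at iteration $j$ and on the $(M,N,K,h,\beta,\gamma)$-well-differentiated structure of the phases, which is unaffected by the noise. Finally, the induction in the proof of Theorem \ref{thm:conv} then yields the desired geometric decay
\[
\max_{1\le k\le K}\|s_k^{E,(j)}\|_{L^2} \le O\!\left(c\epsilon + (M^2(K-1)\beta)^{j+1}\right), \qquad \|r^{(j)}\|_{L^2} \le O\!\left(c\epsilon + (M^2(K-1)\beta)^{j}\right),
\]
with $c = 1/(1 - M^2(K-1)\beta)$.

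The only substantive issue, and the main obstacle, is checking that the conditional variance $\Var(Y_k^{(j)} \mid X_k = x)$ really does not grow with $j$. The subtlety is that the residual $r^{(j)}$ is built by subtracting estimates $\bar s_k^{(j)}$ that themselves depend on the noise realization through prior regressions, so one must argue that the stochastic fluctuations in $\bar s_k^{(j)}$ are deterministic-like perturbations of size $O(\epsilon)$ once $L \ge L_0$, and hence contribute only an additional $O(\epsilon^2)$ to the conditional variance, preserving the bound $\tilde\sigma^2$. Everything else is a direct reuse of the noiseless proof.
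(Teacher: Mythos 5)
Your proposal is correct and follows essentially the same route as the paper: the paper's own proof is a one-line remark that the argument of Theorem \ref{thm:conv} carries over once $L$ is taken large enough (depending on $\sigma^2$) so that the noise-inflated variance still leaves the per-step regression error at $O(\epsilon)$, which is exactly your argument spelled out in detail. The dependence-of-residuals-on-noise subtlety you flag is likewise glossed over in the paper, which works under the idealization that the regressions are solved exactly, so you are not missing anything the authors supply.
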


\begin{proof}
The proof basically follows the one in Theorem \ref{thm:conv}. The difference is that the number of samples in $(X_k,Y_k^{(j)})$ should be large enough, depending on $\sigma^2$, such that Lemma \ref{lem:SP} is still true.
\end{proof}

Theorem \ref{thm:ns} shows that as soon as the number of sampling points $L$ is large enough, the noise effect will be negligible and the RDBR method can still identify generalized shape functions accurately.

\section{Numerical examples}
\label{sec:results}
In this section, some numerical examples of synthetic and real data are provided to demonstrate the proposed properties of the RDBR. We apply the least squares spline regression method with free knots in \cite{Regression} to solve all the regression problems in this paper. The implementation of this method is available online\footnote{Available at https://www.mathworks.com/matlabcentral/fileexchange/25872-free-knot-spline-approximation.}. In all synthetic examples, we assume the fundamental instantaneous phases and amplitudes are known and only focus on verifying the theory of the RDBR in Section \ref{sec:RDBR}. In real examples, we apply the one-dimensional synchrosqueezed wave packet transform (SSWPT) to estimate instantaneous phases and amplitudes as inputs of the RDBR.  The implementation of the SSWPT is also publicly available in SynLab\footnote{Available at https://github.com/HaizhaoYang/SynLab.}. The code for the RDBR will be available online shortly\footnote{Will be available at https://github.com/HaizhaoYang/DeCom.}.

Before presenting results, we would like to summarize the main parameters in the above packages and in Algorithm \ref{alg:pcg}. In the spline regression with free knots, main parameters are
\begin{itemize}
\item $nk$:  the number of free knots;
\item $krf$: the knot removal factor, a number quantifying how likely a free knot would be removed;
\item $ord$: the highest degree of spline polynomials.
\end{itemize}
In SynLab, main parameters are
\begin{itemize}
\item $s$: a geometric scaling parameter;
\item $rad$: the support size of the mother wave packet in the Fourier domain;
\item $red$: a redundancy parameter, the number of frames in the wave packet transform;
\item $\epsilon_{sst}$: a threshold for the wave packet coefficients.
\end{itemize}
In Algorithm \ref{alg:pcg}, main parameters are
\begin{itemize}
\item $mIter$: the maximum number of iterations allowed;
\item $\epsilon$: the accuracy parameter.
\end{itemize}
For the purpose of convenience, the synthetic data is defined in $[0,1]$ and sampled on a uniform grid. All these parameters in different examples are summarized in Table \ref{tab:1}.

\begin{table}[htp]
\centering
\begin{tabular}{rcccccccccc}
\toprule
  figure &  $nk$ & $krf$ & $ord$ & $s$
                             & $rad$ & $red$ & $\epsilon_{sst}$ & $mIter$ & $\epsilon$ & $L$ \\
\toprule
 6 & 20 & 1.01 & 3 & -- & -- & -- & -- & 4000 & 1e-6 & $2^{16}$ \\
 7 (left) & 20 & 1.01 & 3 & -- & -- & -- & -- & 9 & 1e-13 & $2^{12}$\\
 7 (middle) & 20 & 1.01 & 3 & -- & -- & -- & -- & 200 & 1e-13 & -- \\
 7 (right) & 20 & 1.01 & 3 & -- & -- & -- & -- & -- & -- & -- \\
 8 (clean) & 20 & 1.01 & 3 & -- & -- & -- & -- & 200 & 1e-6 & $2^{12}$ \\
 8 (noisy) & 20 & 1.0001 & 3 & -- & -- & -- & -- & 200 & 1e-10 & $2^{16}$ \\
 9 (clean) & 20 & 1.01 & 2 & -- & -- & -- & -- & 200 & 1e-6 & $2^{12}$ \\
 9 (noisy) & 20 & 1.01 & 2 & -- & -- & -- & -- & 200 & 1e-6 & $2^{16}$ \\
 11 (clean) & 20 & 1.01 & 3 & -- & -- & -- & -- & 200 & 1e-6 & $2^{12}$ \\
 11 (noisy) & 20 & 1.001 & 3 & -- & -- & -- & -- & 200 & 1e-6 & $2^{16}$ \\
 12-14 (clean) & 20 & 1.001 & 3 & 0.66 & 1 & 8 & 1e-3 & 200 & 1e-10 & $2^{14}$ \\
 12-14 (noisy) & 20 & 1.001 & 3 & 0.66 & 1 & 8 & 1e-3 & 200 & 1e-10 & $2^{16}$ \\
 15 (noisy) & 20 & 1.01 & 3 & 0.66 & 1 & 20 & 1e-3 & 200 & 1e-10 & 1600 \\
\bottomrule
\end{tabular}
\caption{Parameters in the spline regression, SynLab, and Algorithm \ref{alg:pcg}. The notation ``--" means the corresponding parameter is not used or its value can be found in the description of its corresponding example.}
\label{tab:1}
\end{table}

\begin{figure}
  \begin{center}
    \begin{tabular}{ccc}
      \includegraphics[height=1.8in]{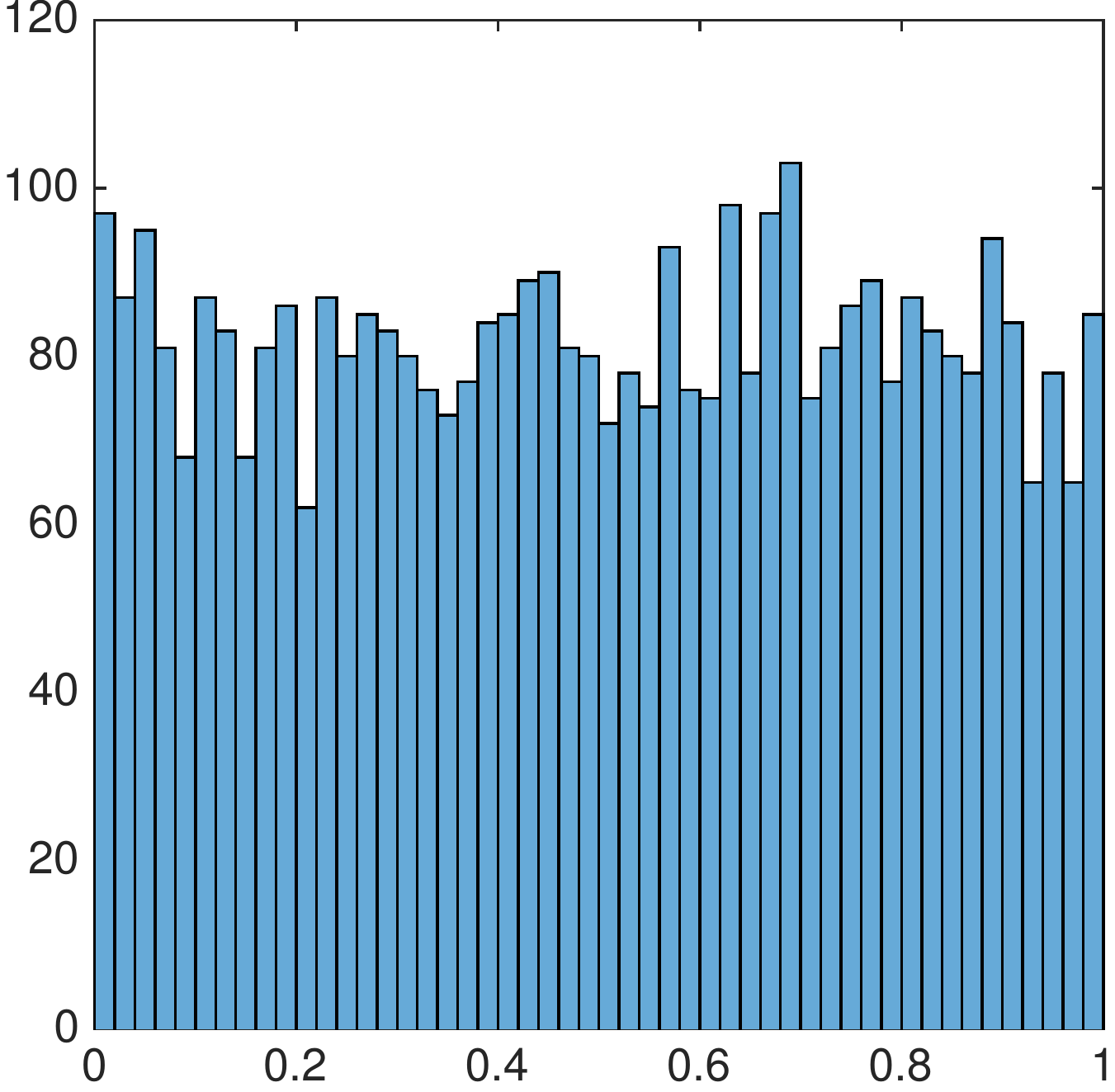} &
      \includegraphics[height=1.8in]{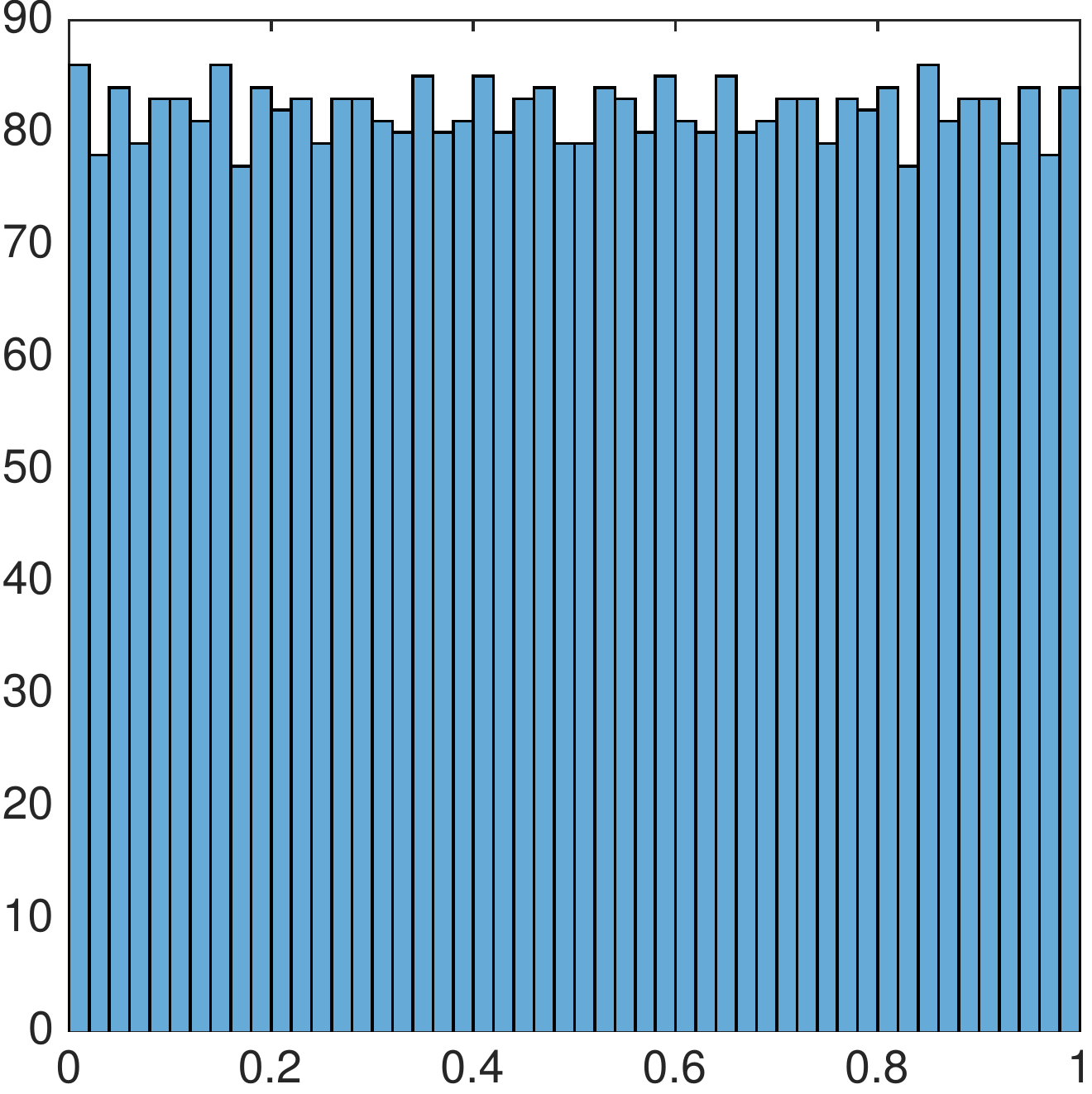} &
      \includegraphics[height=1.8in]{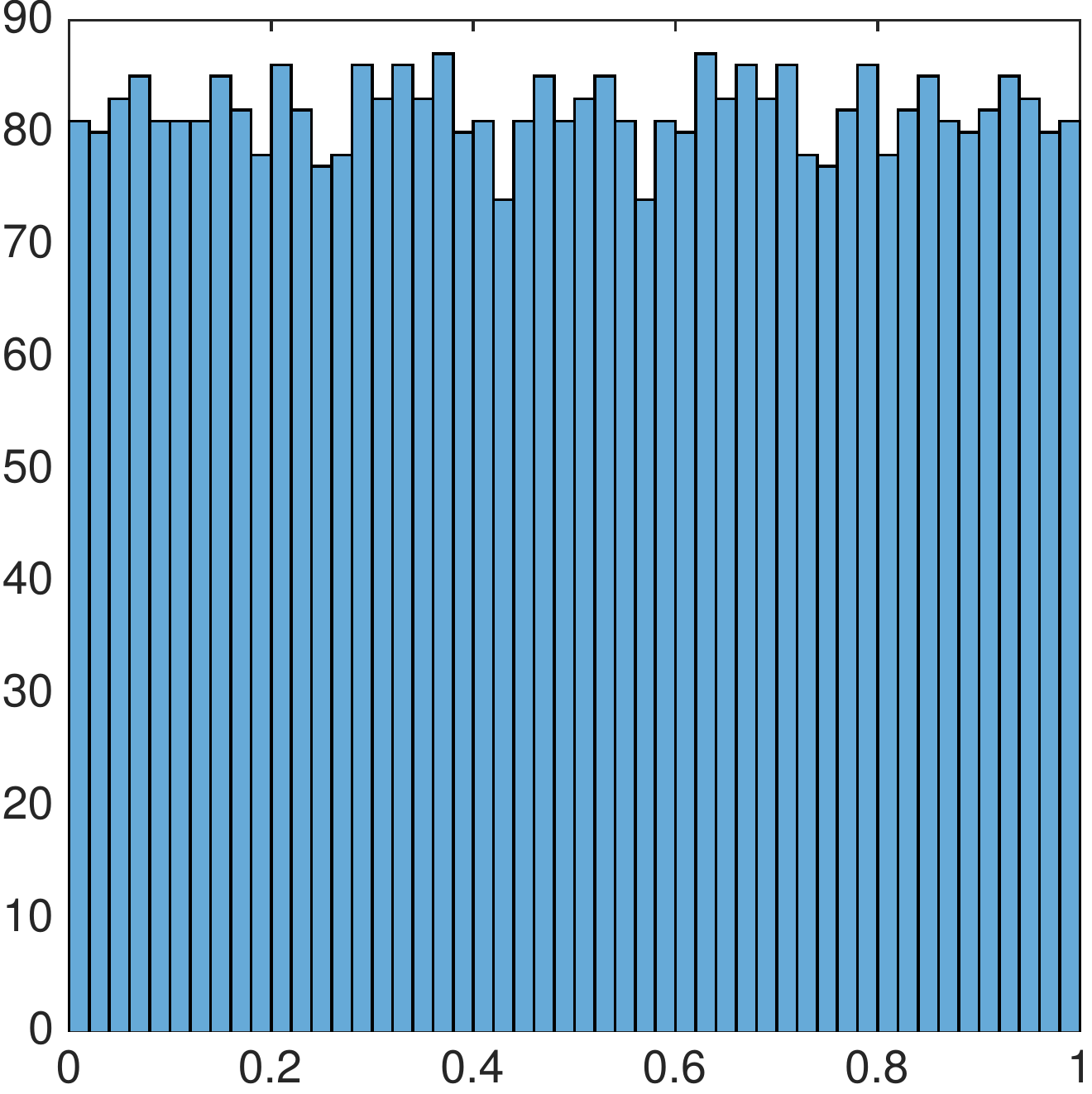} \\
      \includegraphics[height=1.8in]{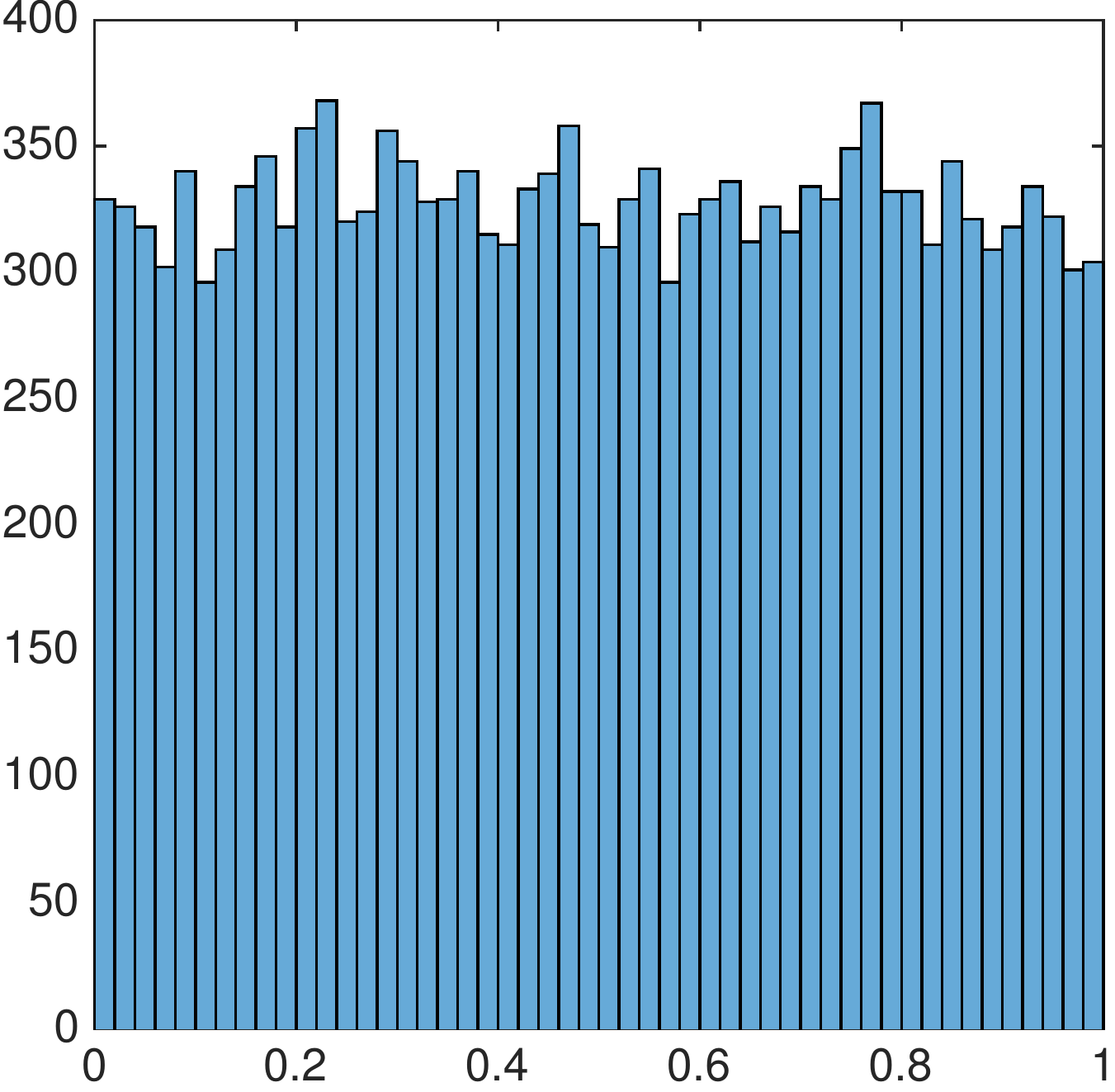} &
      \includegraphics[height=1.8in]{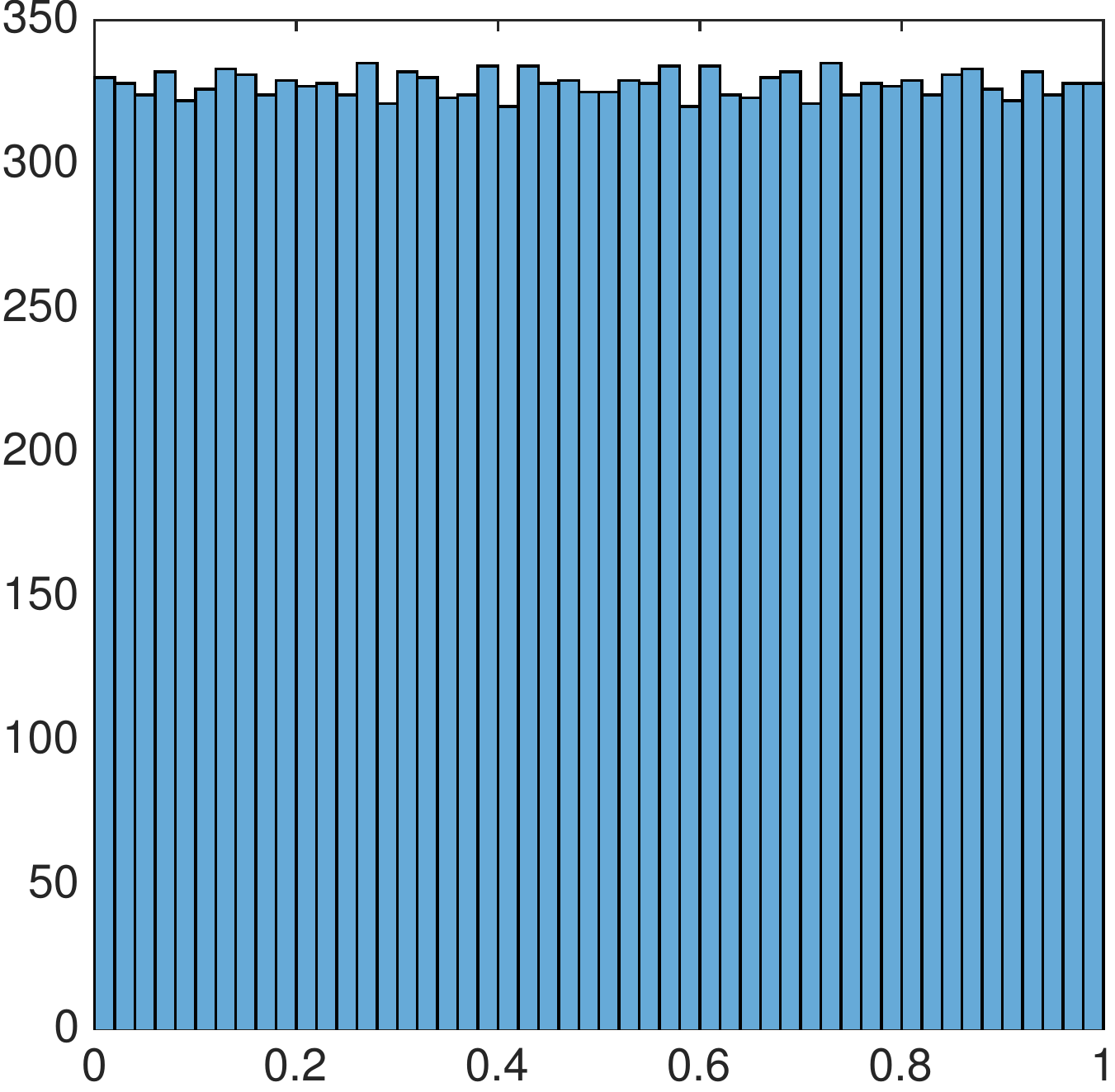} &
      \includegraphics[height=1.8in]{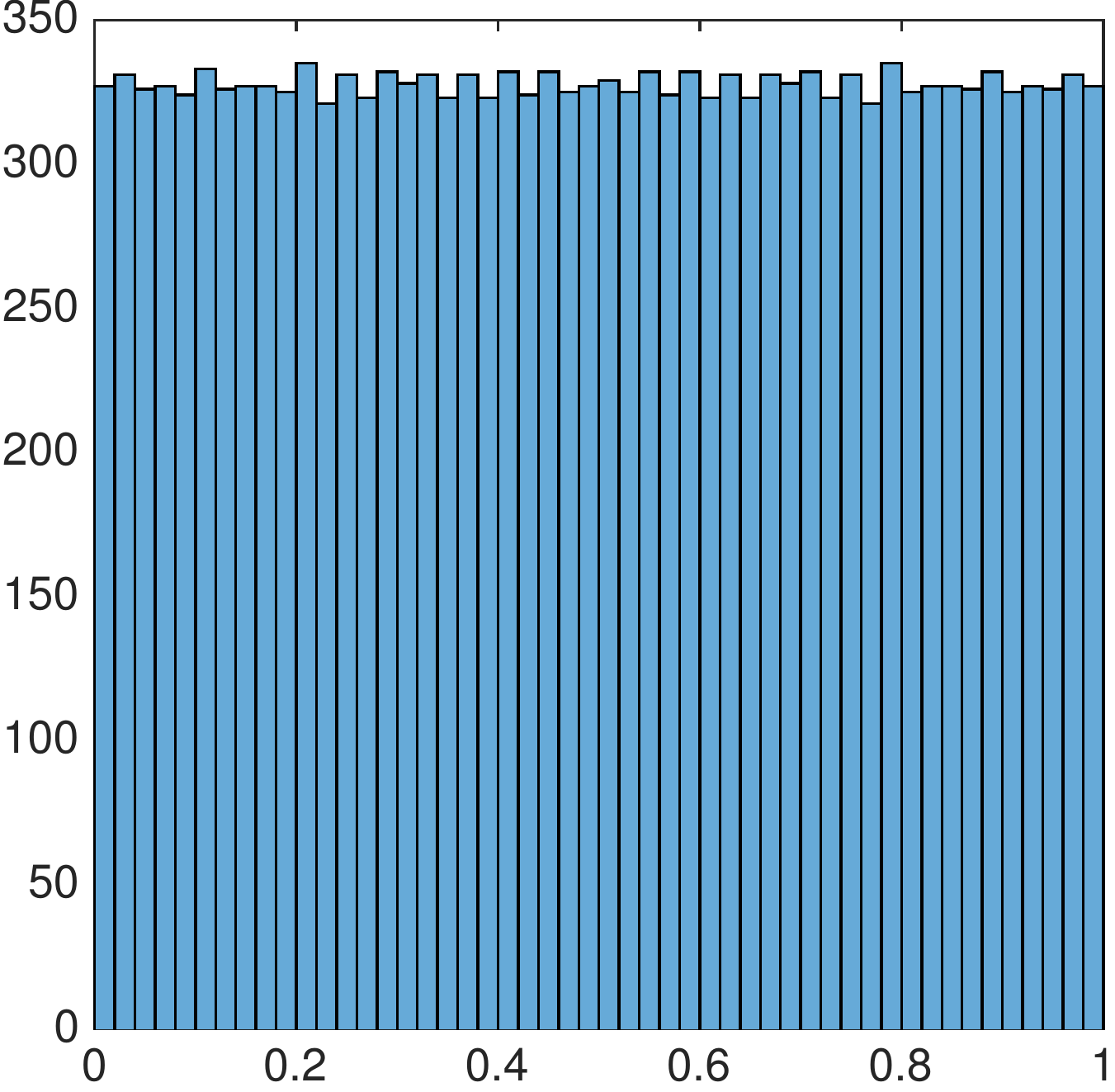} \\
      \includegraphics[height=1.8in]{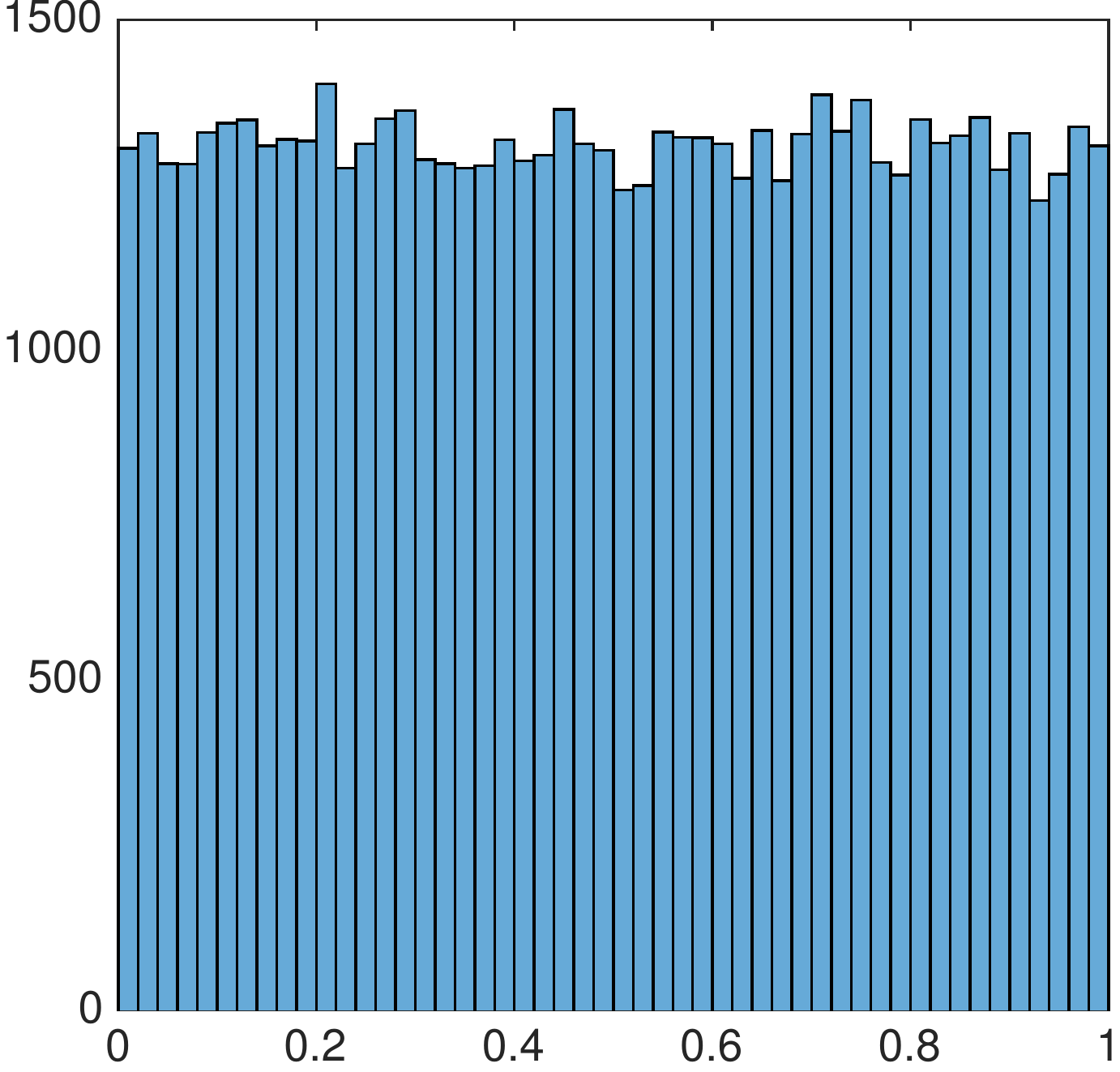} &
      \includegraphics[height=1.8in]{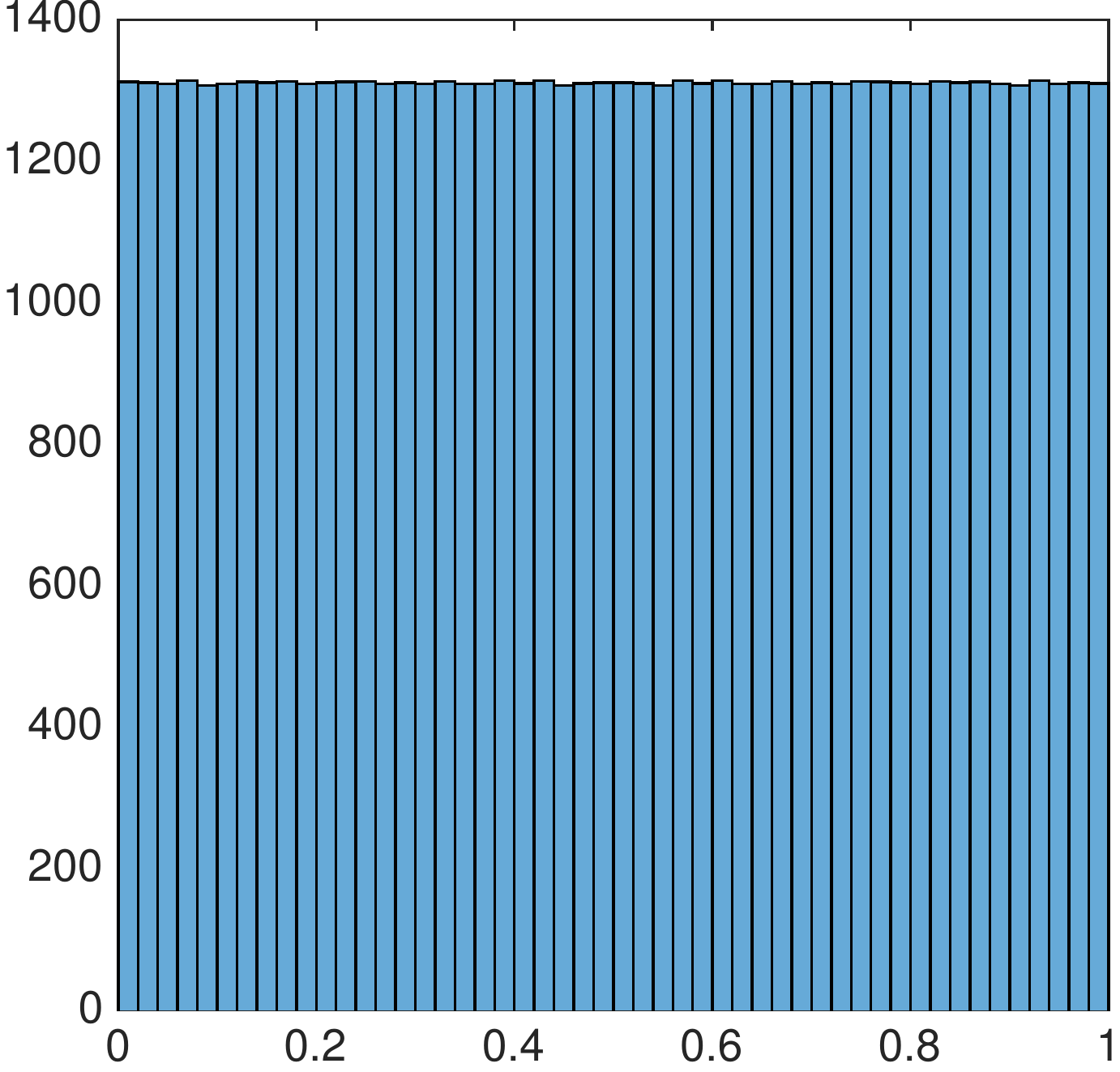} &
      \includegraphics[height=1.8in]{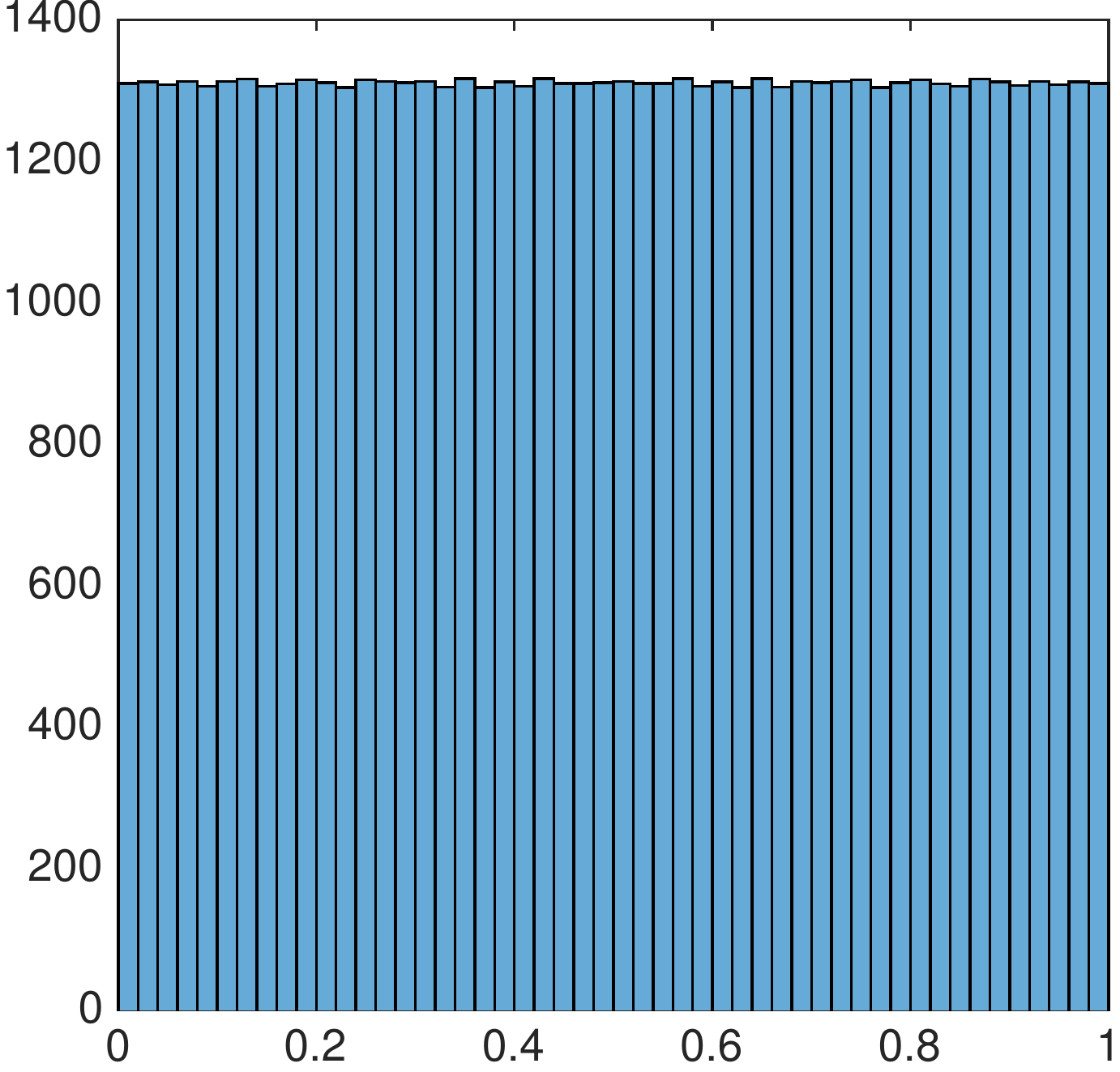} 
    \end{tabular}
  \end{center}
  \caption{Histograms of the point sets $S_0$ (left column), $S_1$ (middle column), and $S_2$ (right column) with a uniform bin size $0.02$. Here $S_1$ and $S_2$ are defined in Equation \ref{eqn:S1} and Equation \ref{eqn:S2}, respectively, and $S_0$ is defined right below Equation \ref{eqn:S2}. From top to bottom, the number of samples in $S_k$ (for all $k=0$, $1$, and $2$) is $L=2^{12}$, $2^{14}$, and $2^{16}$, respectively. These histograms show that points in $S_1$ and $S_2$ are approximately uniformly distributed in $[0,1)$.}
  \label{fig:5}
\end{figure}

In the noisy synthetic examples, Gaussian random noise with a distribution $\N(\sigma^2,0)$ is used. We follow the definition of the signal-to-noise ration ($\SNR$) in \cite{1DSSWPT}:
\begin{equation}
\label{eqn:SNR}
\SNR [dB]=\min \left\{10\log_{10}\left(\frac{\|f_i\|_{L^2}}{\sigma^2}\right),1\leq i\leq K\right\},
\end{equation}
where $\{f_i\}_{i=1}^K$ are the generalized modes contained in the signal $f(t)$ to be analyzed.

\subsection{Numerical distribution of sampling points}
\label{sub:iid}
In this section, we provide numerical examples to support the assumption that, the collection of samples after warping and folding behaves essentially like a collection of i.i.d. samples, in the analysis of the RDBR in Section \ref{sec:RDBR}. 

Let us revisit the example in \eqref{eqn:ex1} and choose its instantaneous phase functions 
\[
p_1(t) = 60(t+0.01\sin(2\pi t))
\]
and 
\[
p_2(t) = 90(t+0.01\cos(2\pi t))
\]
to define warping and folding maps
\begin{eqnarray*}
\tau_1: \ \     t   \mapsto    \text{mod}(p_1(t),1),
\end{eqnarray*}
and
\begin{eqnarray*}
\tau_2: \ \     t   \mapsto    \text{mod}(p_2(t),1).
\end{eqnarray*}
Let $\T$ denote the collection of $L$ uniform grid points $\{t_n\}_{n=0,\dots,L-1}$ in $[0,1]$ with a step side $1/L$ and $t_n = n/L$. The warping and folding map $\tau_1$ (or $\tau_2$) acts like a pseudorandom number generator mapping $\T$ to a collection of i.i.d. samples of a random variable $X$ with a uniform distribution in $[0,1]$, i.e., sample points in 
\begin{equation}
\label{eqn:S1}
S_1=\tau_1(\T)
\end{equation}
and 
\begin{equation}
\label{eqn:S2}
S_2=\tau_2(\T)
\end{equation}
are approximately uniformly distributed in $[0,1]$. To support this claim numerically, we randomly sample $L$ points from a random variable with a uniform distribution in $[0,1]$ and denote this set of samples as $S_0$. The distribution of $S_k$ for $k=0$, $1$, and $2$, when $L=2^{12}$, $2^{14}$, and $2^{16}$, are summarized in Figure \ref{fig:5}. Figure \ref{fig:5} shows that points in $S_1$ and $S_2$ have a more uniform distribution than those in $S_0$. Hence, $S_1$ and $S_2$  would lead to better results in the regression than $S_0$.

\subsection{Convergence of the RDBR}

In this section, numerical examples are provided to verify the convergence analysis in Section \ref{sec:RDBR}. In the analysis, for a fixed accuracy parameter $\epsilon$, we have shown that as long as the fundamental instantaneous frequencies are sufficiently high and the number of samples is large enough, the RDBR is able to estimate shape functions from a class of superpositions of generalized modes. The residual error in the iterative scheme linearly converges to a quantity of order $\epsilon$. Since it is difficult to specify the relation of the rate of convergence and other parameters explicitly in the analysis, we provide numerical examples to study this rate quantitatively.

\begin{figure}[ht!]
  \begin{center}
    \begin{tabular}{ccc}
      \includegraphics[height=1.6in]{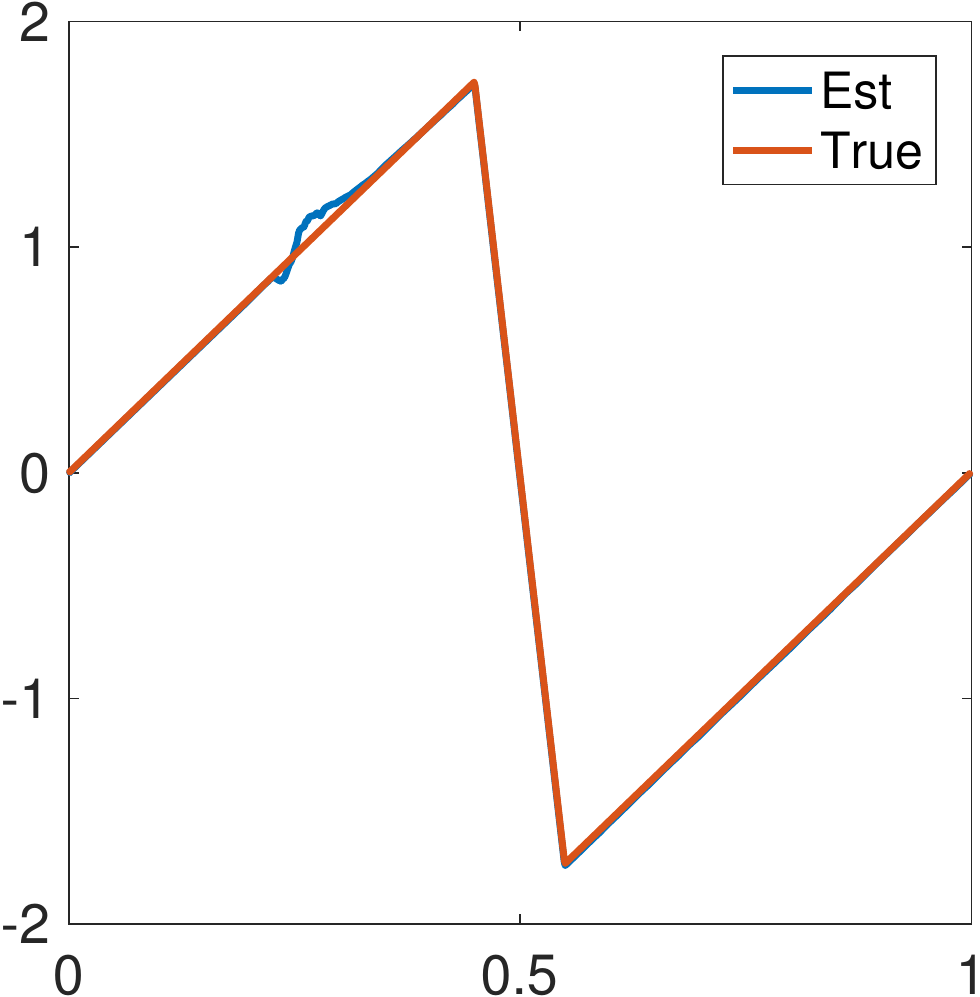}  &
   \includegraphics[height=1.6in]{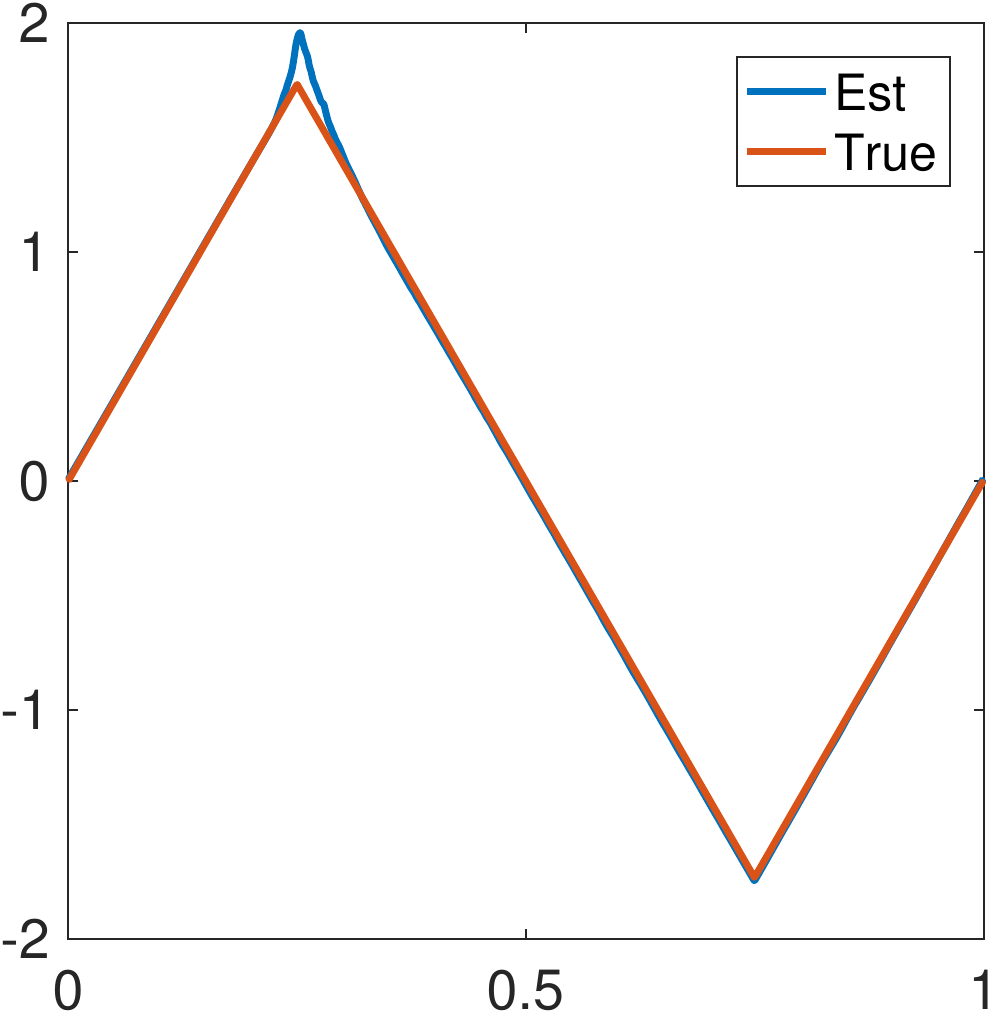}  &
   \includegraphics[height=1.65in]{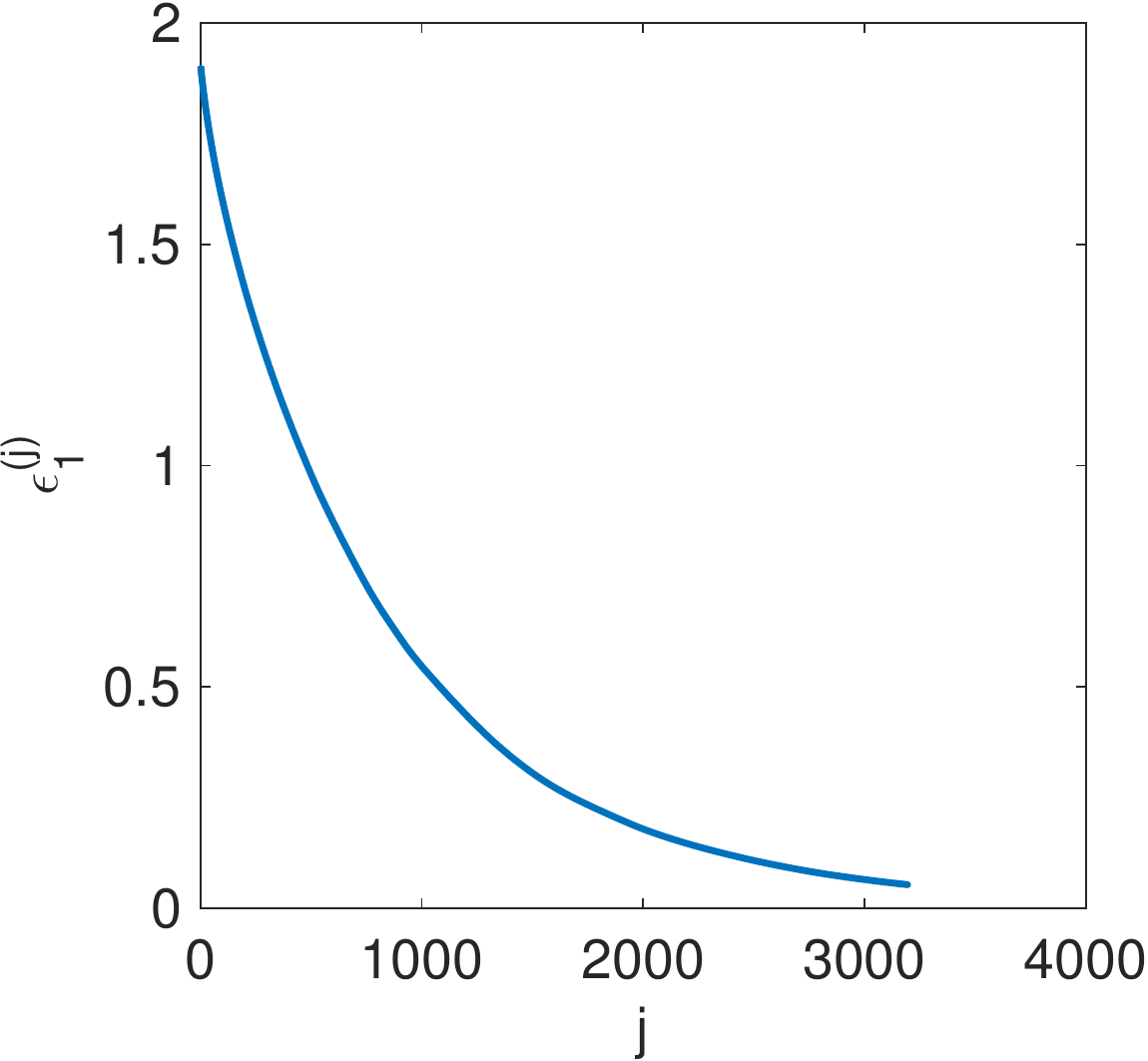}  
    \end{tabular}
  \end{center}
  \caption{Numerical results of the signal in \eqref{eqn:ex2} when $N=2$. Left: the ground truth shape function $s_1$ and its estimation by the RDBR. Middle: the ground truth shape function $s_2$ and its estimation by the RDBR. Right: the $L^2$-norm of residual $r^{(j)}$ in the $j$th iteration, i.e., $\epsilon_1$ in Algorithm \ref{alg:pcg}. } 
\label{fig:61}
\end{figure}

In all examples in this section, we consider a simple case when the signal has two components with piecewise linear and continuous generalized shapes. This makes it easier to verify the convergence analysis. For example, we consider signals of the form
\begin{equation}
\label{eqn:ex2}
f(t) = f_1(t)+f_2(t),
\end{equation}
where
\[
f_1(t) = \alpha_1(t)s_1(2\pi N\phi_1(t))= (1+0.05\sin(4\pi x))s_1\left(2\pi N(x+0.006\sin(2\pi x))\right)
\] 
and
\[
f_2(t) = \alpha_2(t)s_2(2\pi N\phi_2(t))= (1+0.05\cos(2\pi x))s_1\left(2\pi N(x+0.006\cos(2\pi x))\right),
\] 
$s_1(t)$ and $s_2(t)$ are generalized shape functions defined in $[0,1]$ as shown in Figure \ref{fig:61}. 

In the first example of this section, we show that the RDBR still converges even if the fundamental instantaneous frequencies are very low, i.e., the signal only contains a few periods of oscillation.  Figure \ref{fig:61} shows the numerical results of a signal when $N=2$ in \eqref{eqn:ex2} and $L=2^{16}$ samples on a uniform grid in $[0,1]$. This is a challenging case when there are approximately two periods in each mode. Although we cannot prove a linear convergence in this case, Figure \ref{fig:61} (right) shows that the RDBR converges with a sublinear convergence rate. With a sufficiently large iteration number, the RDBR is able to identify shape functions with a reasonably good accuracy as shown in Figure \ref{fig:61} (left and middle). The capability of handling low-frequency modes is attractive to people working on real-time data, in which the shape function might change in time and such changes are interesting to track; in this case, only the information within a few consecutive periods can be used for shape function extraction (see  \cite{ceptrum} for more references).

\begin{figure}[ht!]
  \begin{center}
    \begin{tabular}{ccc}
      \includegraphics[height=1.6in]{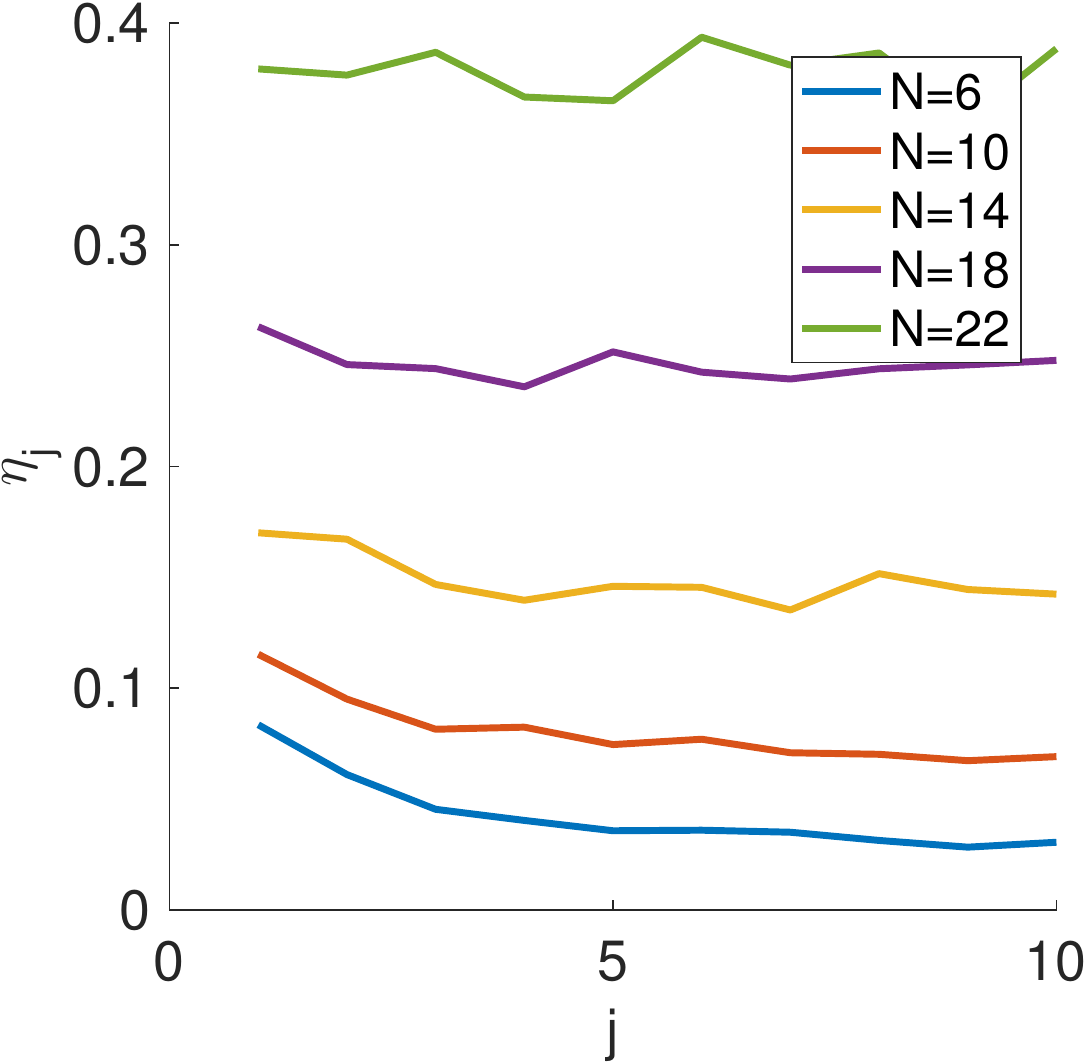}  &
      \includegraphics[height=1.6in]{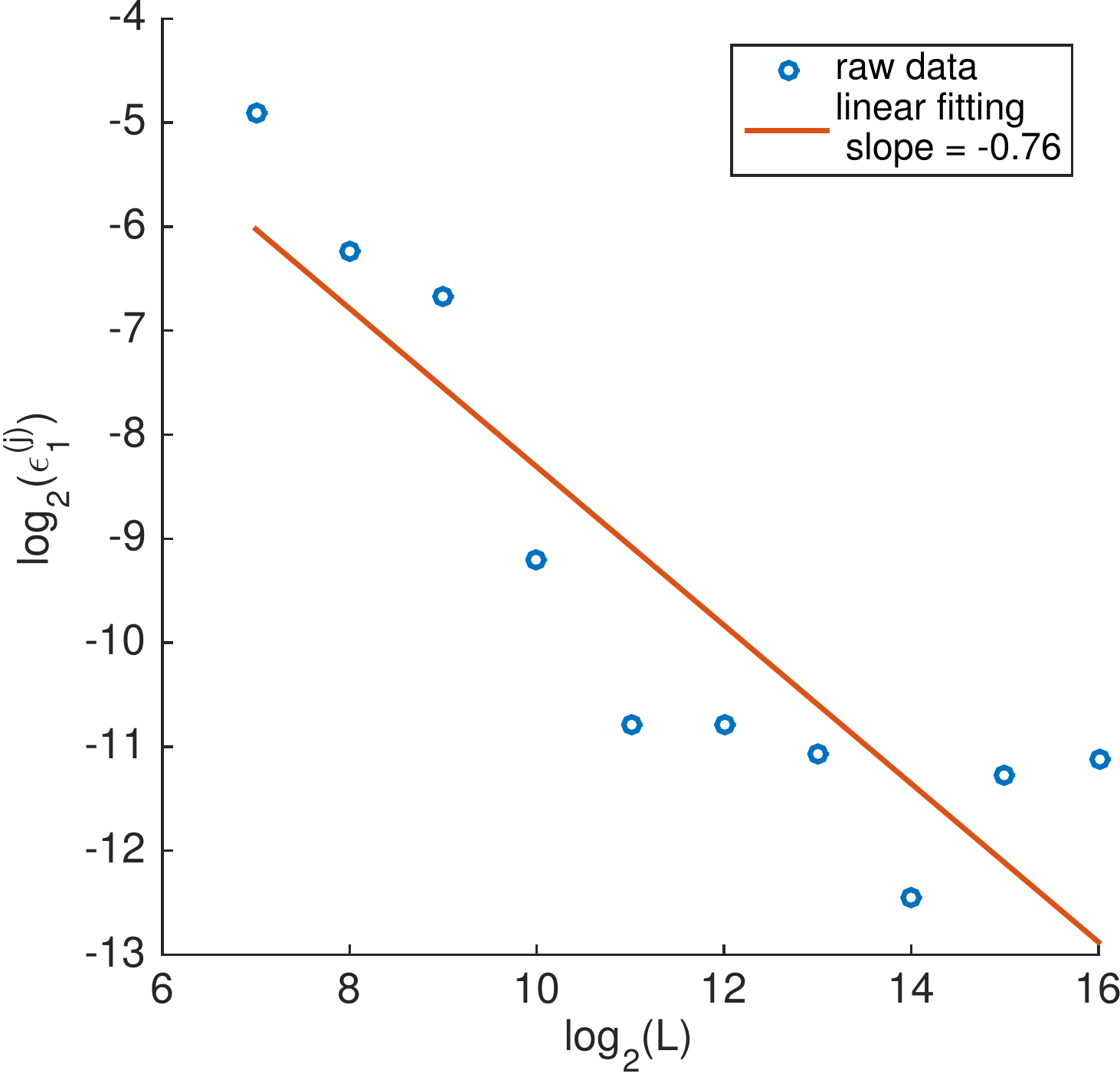}  &
      \includegraphics[height=1.6in]{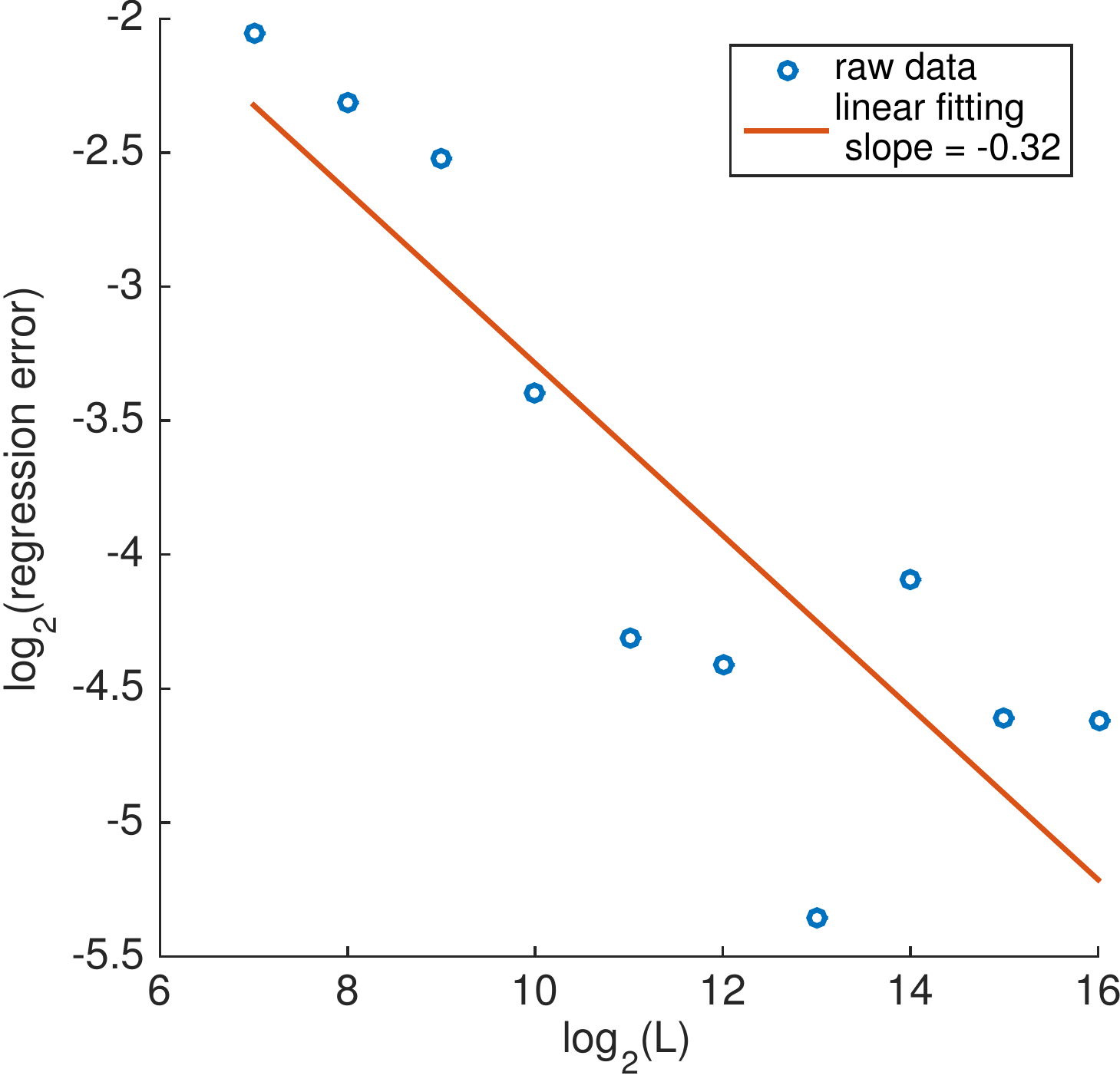}  
    \end{tabular}
  \end{center}
  \caption{Left: Estimated convergence rates $\beta$ in different iteration steps when different values are assigned to $N$ in \eqref{eqn:ex2}. Middle: the relation of the final residual norm $\epsilon_1^{(j)}$ (after the RDBR has been terminated) and the number of samples $L$. Right: the relation of the regression error in the $L^2$-norm and the number of samples $L$.}
\label{fig:7}
\end{figure} 

In the second example in this section, we fix the number of samples $L=2^{12}$, vary the parameter $N$ in \eqref{eqn:ex2}, and estimate the convergence rate numerically. By Theorem \ref{thm:conv} (adapted to the example in this section), the residual norm $\epsilon_1$ in Algorithm \ref{alg:pcg} converges to $O(\epsilon)$ as follows
\[
\epsilon_1^{(j)}=O(\epsilon)+\beta^jO(1).
\]
Hence, if we define a sequence $\{\mu_j\}$ by
\[
\mu_{j} =\log( |\epsilon_1^{(j-1)}-\epsilon_1^{(j)}|).
\]
and a sequence $\{\eta_j\}$ by
\[
\eta_j = \mu_j-\mu_{j+1},
\]
then $\eta_j$ approximately quantifies the convergence in the $j$th iteration, and should be nearly a constant close to $-\log(\beta)$. Figure \ref{fig:7} (left) visualizes the sequences $\{\eta_j\}$ generated from different signals with various $N$'s. It shows that when the fundamental frequency $N$ is sufficiently large, $\{\eta_j\}$ are approximately a constant for all $j$ and hence the convergence is linear; when $N$ is small, the RDBR converges sublinearly since $\eta_j>0$ for all $j$ and $\{\eta_j\}$ decays as $j$ becomes large. Remark that the convergence analysis is valid up to an $O(\epsilon)$ accuracy, i.e., once the residual is reduced to $O(\epsilon)$, it might not be reduced any further and, in the worst case, it might even increase again due to the numerical error in the spline regression with free knots. Hence, we only show results in the first few iterations in Figure \ref{fig:7} (left) to verify the convergence rate. Actually in the next example will illustrate the effect of the error in the spline regression on the accuracy of the RDBR.

In the last example of this section, we fixed $N=100$, only vary the number of samples $L=2^m$ with $m=7,8,\dots,12$, and compare the accuracy of the RDBR and the spline regression with free knots. To obtain results with an accuracy as high as possible, we let $maxIter=200$ and $\epsilon=1e-13$. Figure \ref{fig:7} (middle) shows that the final residual norm $\epsilon_1$ after the RDBR essentially decays in $L$ with the exception of the two largest values of $L$.

To understand the two exceptions observed in Figure \ref{fig:7} (middle), let us check the effect of the error in the spline regression on the accuracy of the RDBR. Recall that the final residual norm after the RDBR depends on the accuracy of the regression in Lemma \ref{lem:SP} by the analysis in Theorem \ref{thm:conv}. Although Theorem \ref{thm:reg} considers only the partition-based regression, a similar conclusion holds for the spline regression (see Chapter 14 in \cite{regressionBook}). Hence, if the number of samples $L$ increases to infinity, the regression error is reduced to a small constant depends on other parameters, i.e., the solution of the regression problem cannot be improved any more by increasing $L$. Therefore, the accuracy of the RDBR might not be improved by increasing $L$ if $L$ has been large enough. This explains the two exceptions in Figure \ref{fig:7} (middle).

To further verify the explanation in the last paragraph, we check the accuracy of the spline regression when $L$ is increased. Recall that, in each iteration of the RDBR, the estimation of each shape function is perturbed by other modes. For the example considered here, in the first iteration, when we try to estimate $s_1$ by regression, another mode $f_2$ acts like a noise perturbation with a nonzero mean and a bounded variance determined by the amplitude function and the shape function in $f_2$. By the formula of $f_2$, the largest amount perturbed is approximately $0.5$. Hence, we use a toy example
\[
Y=s_1(2\pi X)+ns,
\]
where  $X$ is a random variable with a uniform distribution in $[0,1]$, and $ns$ is a random variable with a zero mean and a uniform distribution in $[-0.5,0.5]$. $L$ samples of the random vector $(X,Y)$ are generated independently. The spline regression with free knots is applied to estimate $s_1(2\pi x)$ from these samples and its $L^2$ regression error defined in Theorem \ref{thm:reg} is recorded. Figure \ref{fig:7} (right) shows the regression errors with different $L$'s. As we can see, the regression error cannot be further reduced by increasing $L$ once $L$ has reached almost the same (large) critical value as in Figure \ref{fig:7} (middle). This agrees with the explanation in the last paragraph. Remark that the accuracy in Figure \ref{fig:7} (middle) is much higher than the one in Figure \ref{fig:7} (right), and the decay rate is larger as well, indicating the possibility that the accuracy of the recursive scheme in the RDBR exceeds that of a single regression.

\subsection{Synthetic examples}
In this section, we apply the RDBR to examples with known instantaneous properties in various generalized mode decomposition problems. To make it easier to compare the RDBR with other methods, we follow the examples in the paper of the diffeomorphism-based spectral analysis (DSA) method in \cite{1DSSWPT}, since \cite{1DSSWPT} provides various examples with different shape functions. We will apply the RDBR to the examples in Figure 16, 17, and 18 in \cite{1DSSWPT}, without replicating the results of the DSA here. For more details about setting up these examples, the reader is referred to \cite{1DSSWPT}. As for the parameters in the RDBR, please see Table \ref{tab:1}.

The first example in this section, corresponding to the example in Figure 17 and 18 in \cite{1DSSWPT}, contains two generalized modes generated with two ECG shape functions shown in Figure \ref{fig:11}. Figure \ref{fig:11} (left two graphs) shows the recovered shape functions, as compared with the ground truth shape functions, when the synthetic data is clean. Figure \ref{fig:11} (right two graphs) shows the recovered shape functions when the signal-to-noise ratio ($\SNR$ as defined in Equation \eqref{eqn:SNR}) is $-3\text{ dB}$. Note that the $\SNR$ in the example in Figure 17 and 18 in \cite{1DSSWPT} is $0\text{ dB}$, which is larger than the one in our example. Comparing Figure 17 in \cite{1DSSWPT} and Figure \ref{fig:11}, we see that the RDBR is more accurate than the DSA.

\begin{figure}[ht!]
  \begin{center}
    \begin{tabular}{c}
      \includegraphics[height=1.3in]{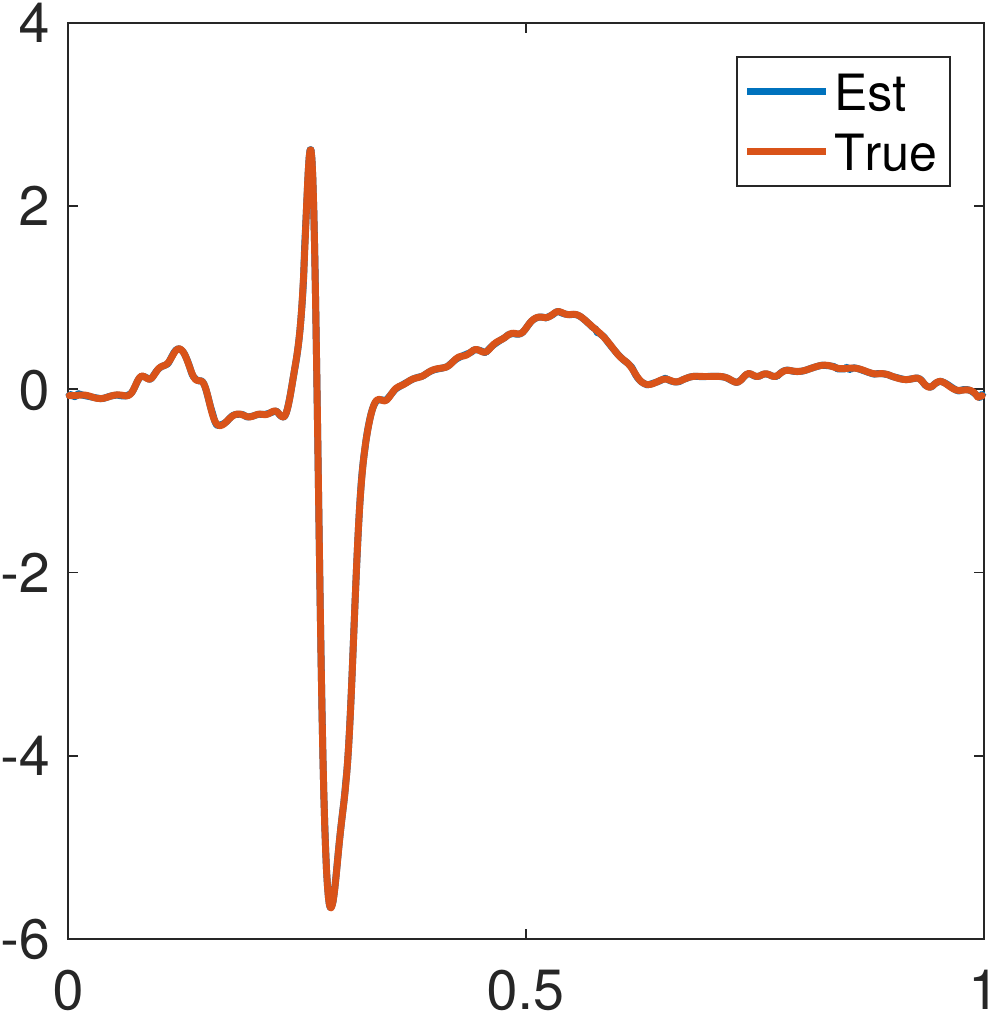}  
   \includegraphics[height=1.3in]{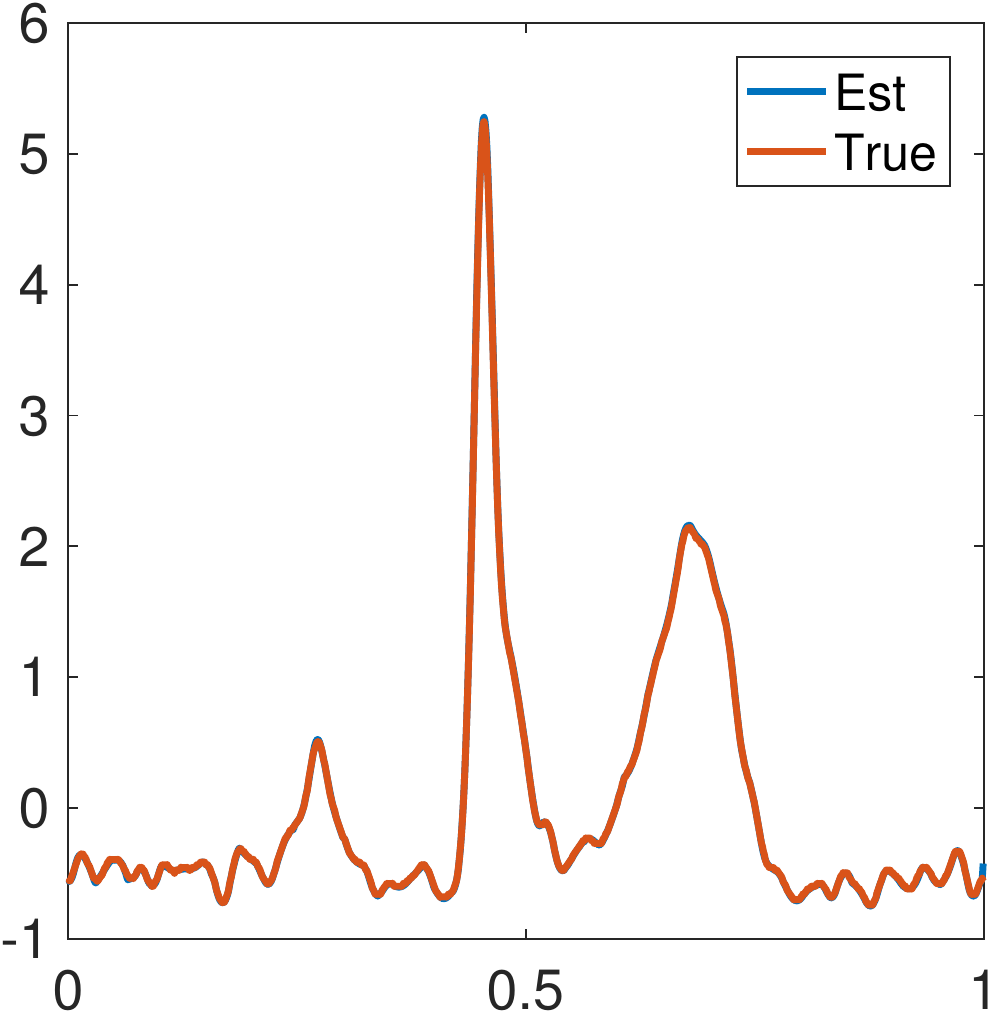}  \hspace{1cm}
   \includegraphics[height=1.3in]{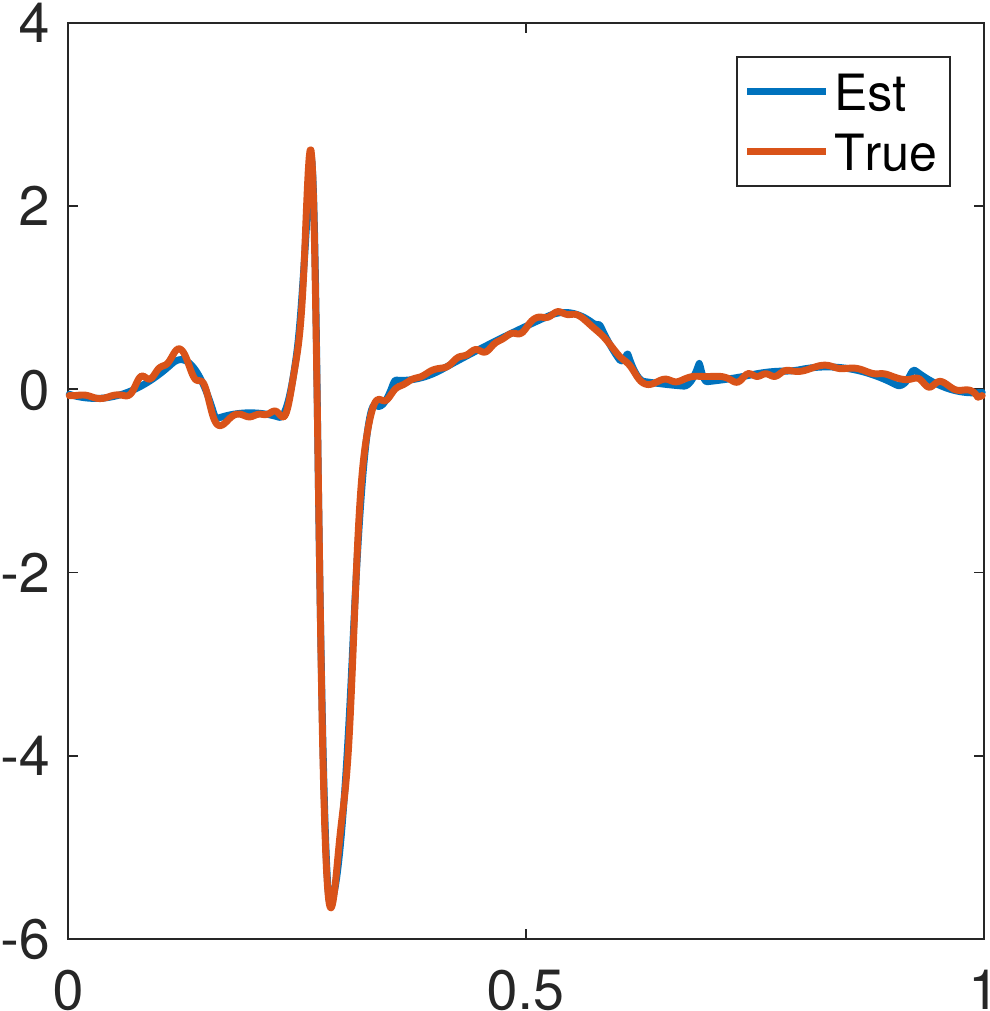}  
      \includegraphics[height=1.3in]{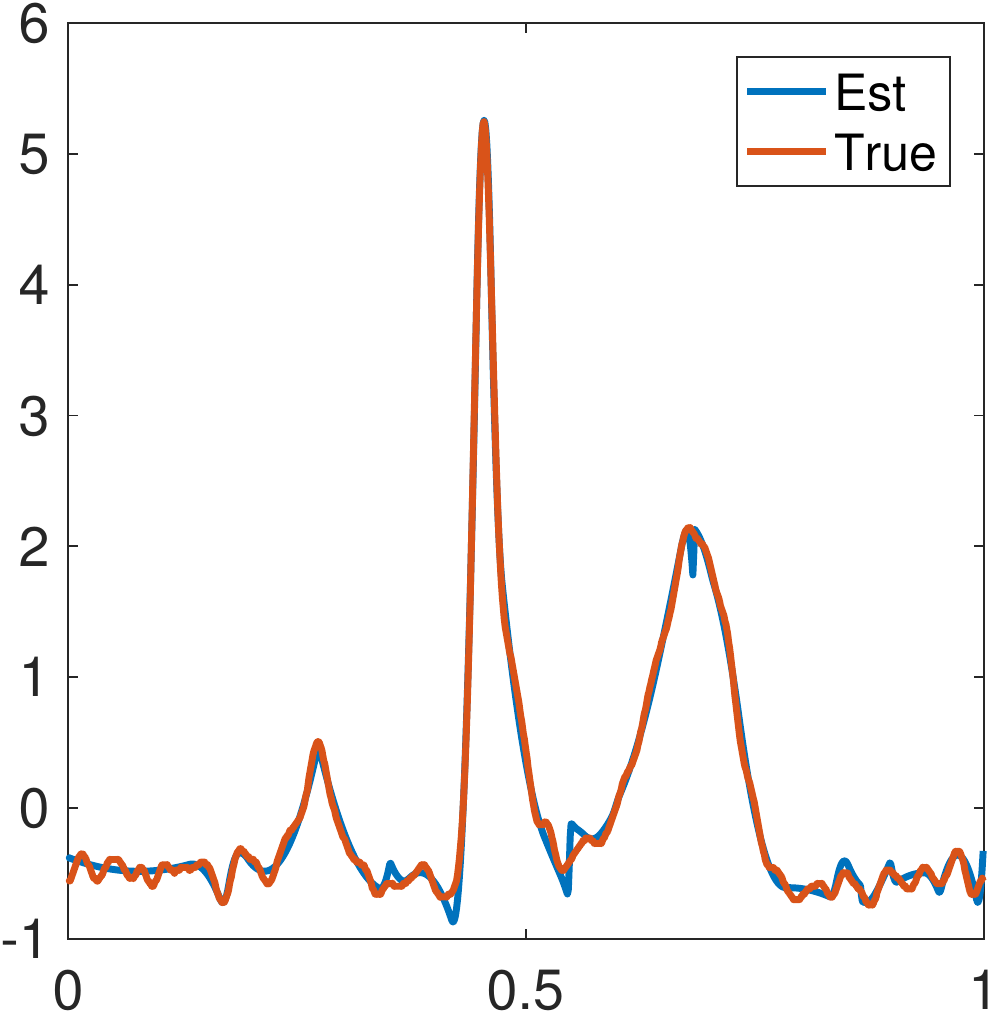}  
    \end{tabular}
  \end{center}
  \caption{Recovered shapes in clean (left) and noisy (right) examples, as compared with ground truth shapes.}
\label{fig:11}
\end{figure}

The second example in this section, corresponding to the example in Figure 16 in \cite{1DSSWPT}, contains two generalized modes generated with two piecewise constant shape functions, which is shown in Figure \ref{fig:12}. Figure \ref{fig:12} (left two graphs) shows the recovered shape functions, as compared with the ground truth shape functions, when the synthetic data is clean. Figure \ref{fig:12} (right two graphs) shows the recovered shape functions when $\SNR=-3\text{ dB}$. Again, the $\SNR$ in the example in Figure 16 in \cite{1DSSWPT} is $0\text{ dB}$, meaning that our example here is noisier than that in \cite{1DSSWPT}, and we obtained better results, even though the results in \cite{1DSSWPT} are improved by additional TV-norm minimization to remove noise and the Gibbs phenomenon around discontinuous points in shape functions. Comparing Figure 16 in \cite{1DSSWPT} and Figure \ref{fig:12}, we see that the RDBR, even without post-processing, is competitive with the DSA.

\begin{figure}[ht!]
  \begin{center}
    \begin{tabular}{c}
      \includegraphics[height=1.3in]{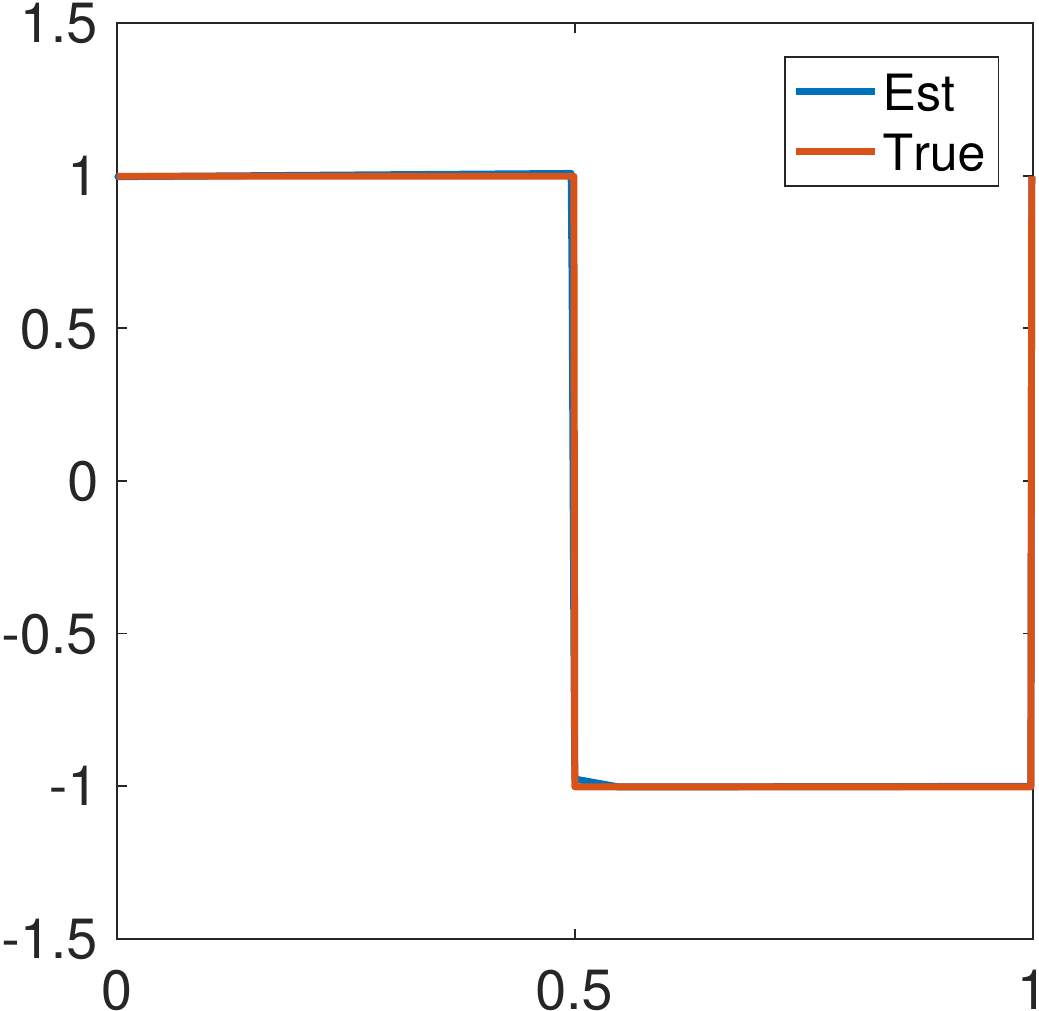}  
   \includegraphics[height=1.3in]{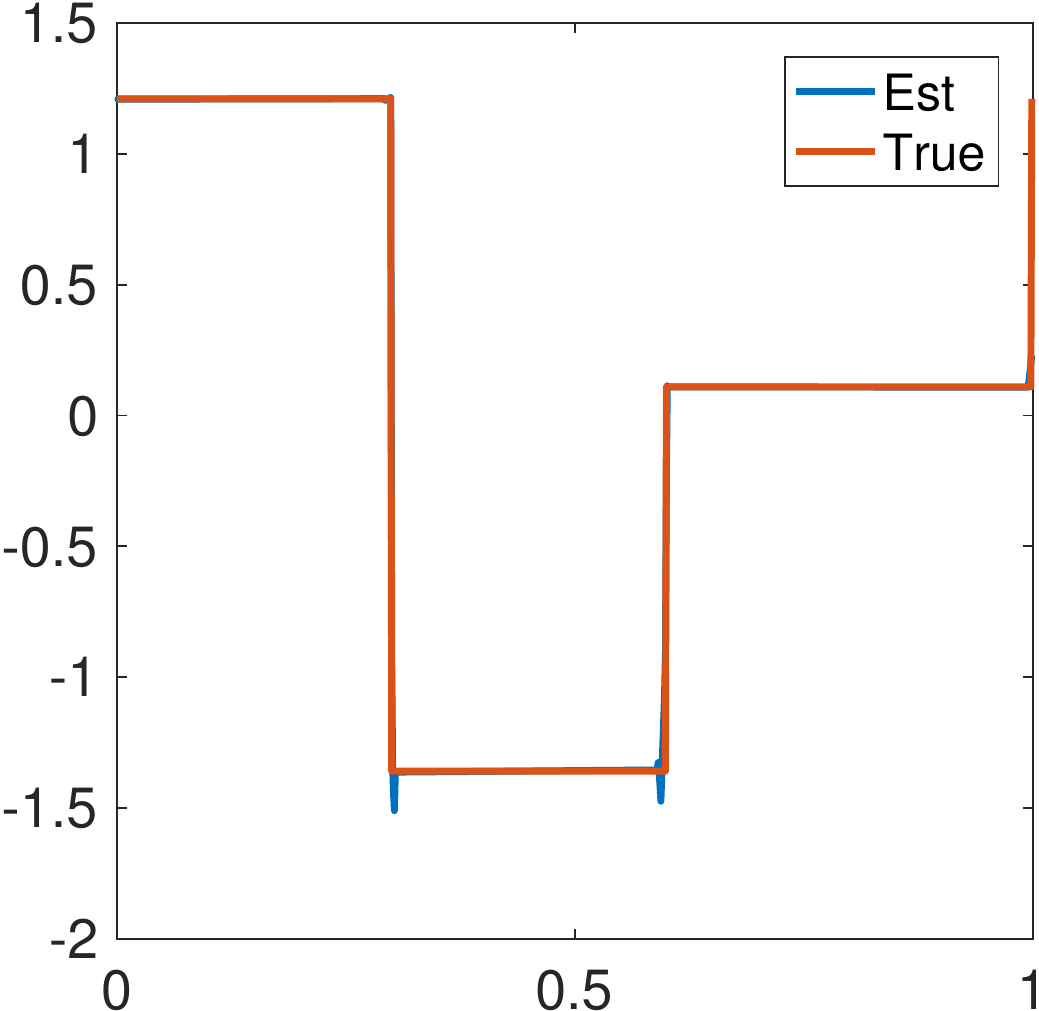}    \hspace{1cm}
   \includegraphics[height=1.3in]{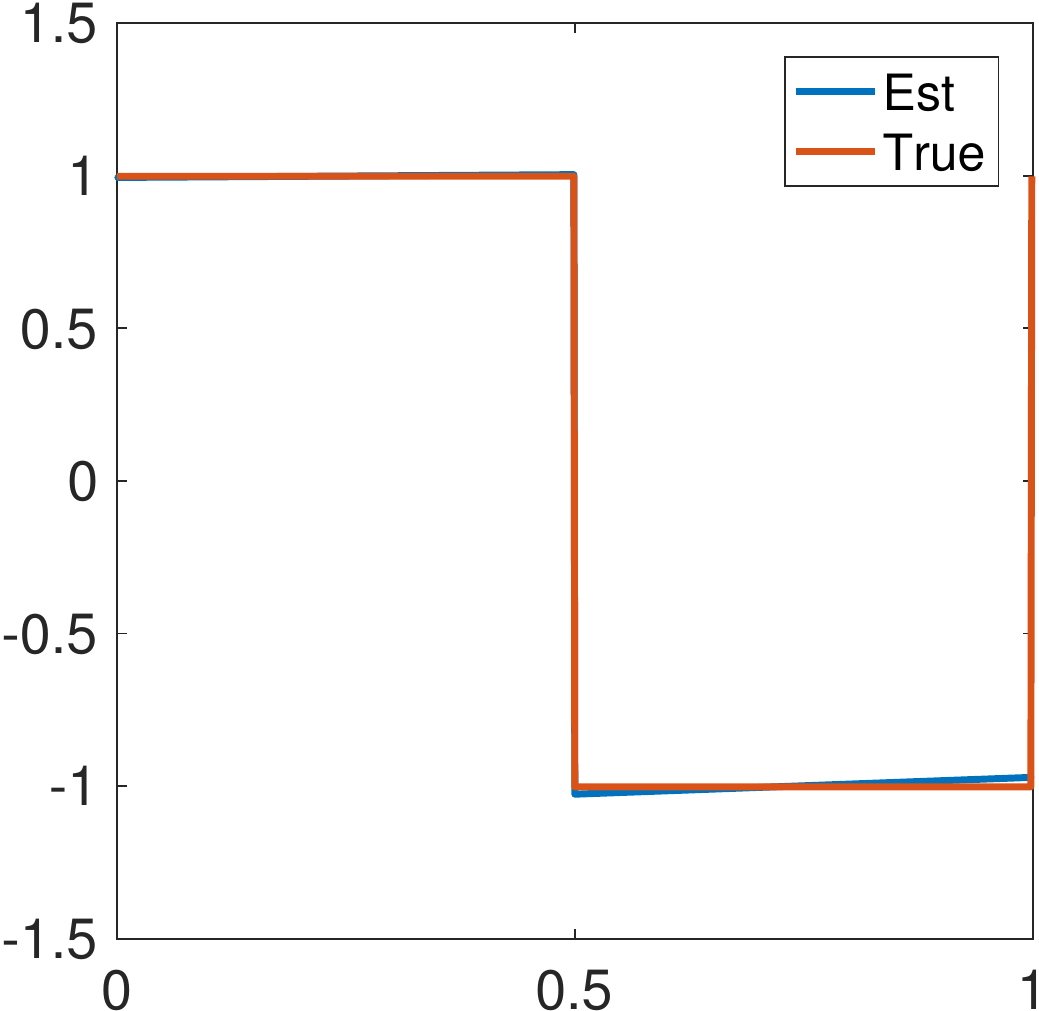}  
      \includegraphics[height=1.3in]{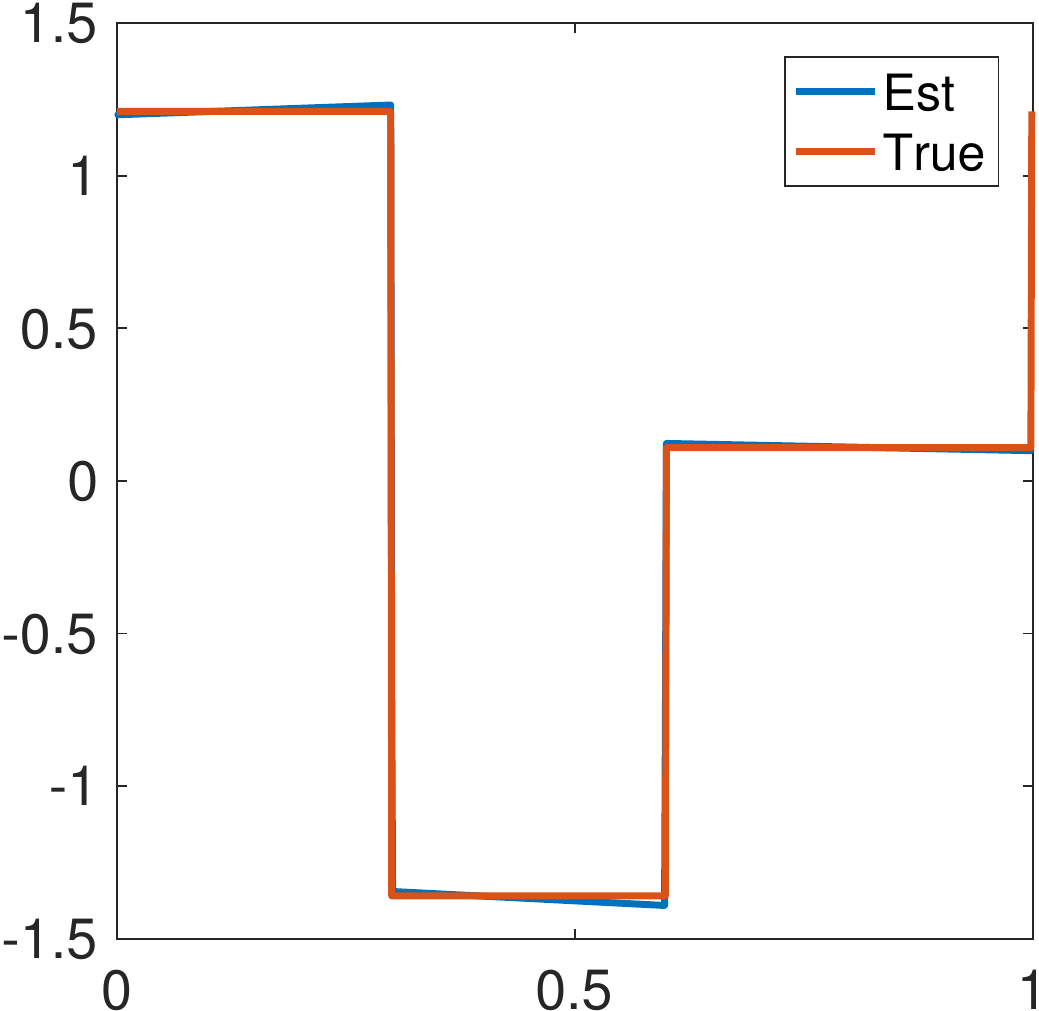}  
    \end{tabular}
  \end{center}
  \caption{Recovered shapes in clean (left) and noisy (right) examples, as compared with ground truth shapes.}
\label{fig:12}
\end{figure}

In the last example of this section, we apply the RDBR to a very challenging case, in which the signal $f(t)$ contains four modes with close instantaneous frequencies (see Figure \ref{fig:13-2}) given below.
\begin{equation}
\label{eqn:ex3}
f(t) = f_1(t)+f_2(t)+f_3(t)+f_4(t),
\end{equation}
where
\[
f_k(t)=s_k(2\pi N\phi_k(t)),
\]
\[
\phi_k(t) = t + 0.05(k-1) + 0.01 \sin(2\pi(t+0.05(k-1))),
\]
for $k=1,\dots,4$, $N=200$, and $\{s_k(t)\}_{k=1,\dots,4}$ are visualized in Figure \ref{fig:13}. As shown in Figure \ref{fig:13}, the RDBR is able to estimate shape functions precisely from clean data. In the case of very noisy data when $\SNR=-3\text{ dB}$, even if the instantaneous frequencies are very close, the RDBR is still able to recover shape functions with a reasonably good accuracy.

\begin{figure}[ht!]
  \begin{center}
    \begin{tabular}{c}
      \includegraphics[height=2in]{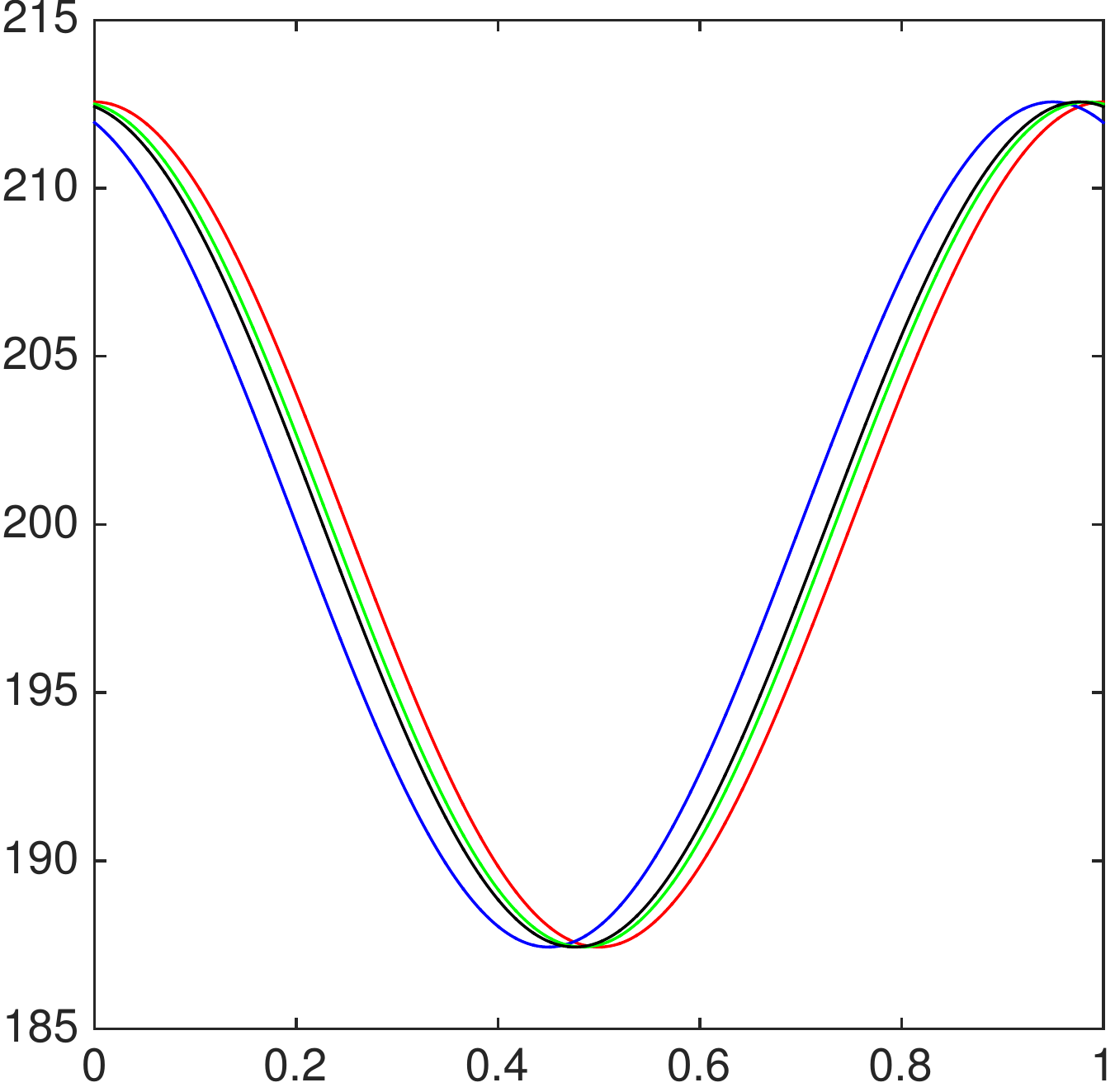}  
    \end{tabular}
  \end{center}
  \caption{Instantaneous frequencies of the example in \eqref{eqn:ex3}.}
\label{fig:13-2}
\end{figure}

\begin{figure}[ht!]
  \begin{center}
    \begin{tabular}{c}
      \includegraphics[height=1.3in]{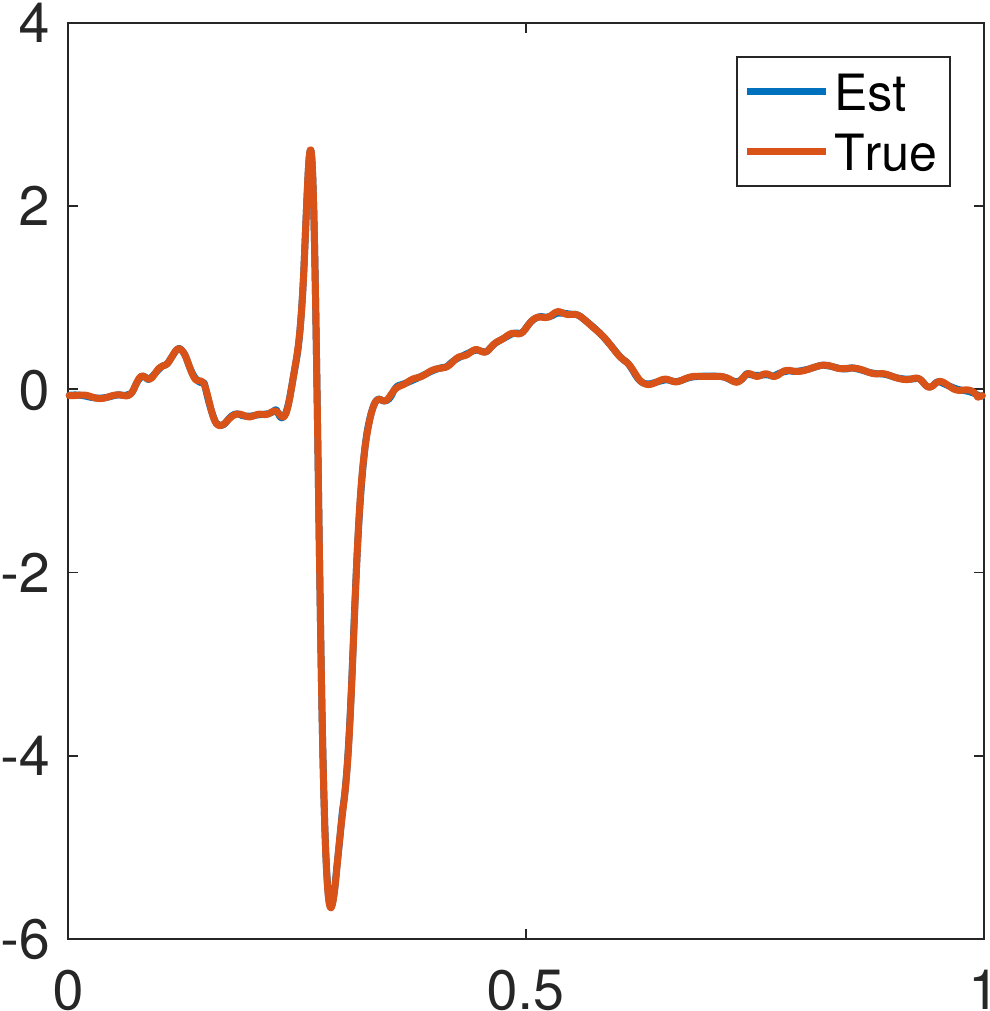}  
   \includegraphics[height=1.3in]{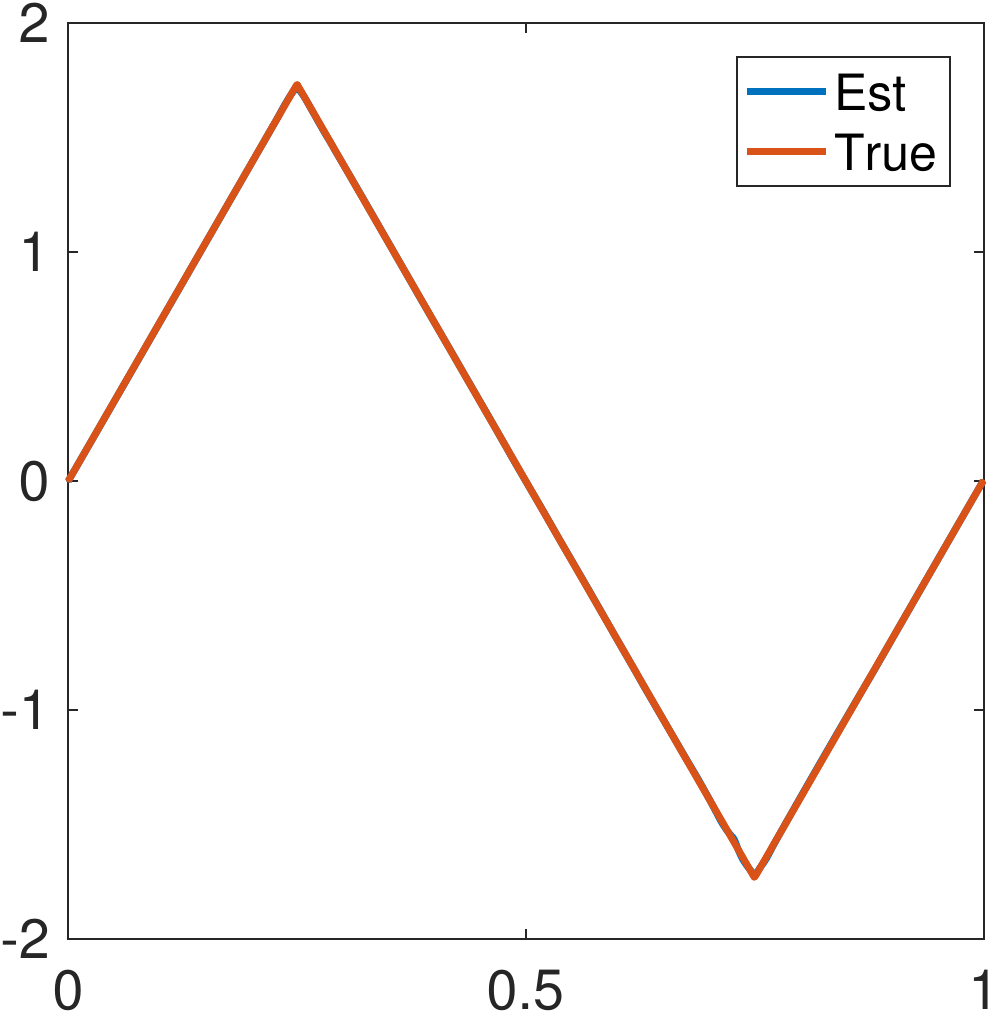}    \hspace{1cm}
   \includegraphics[height=1.3in]{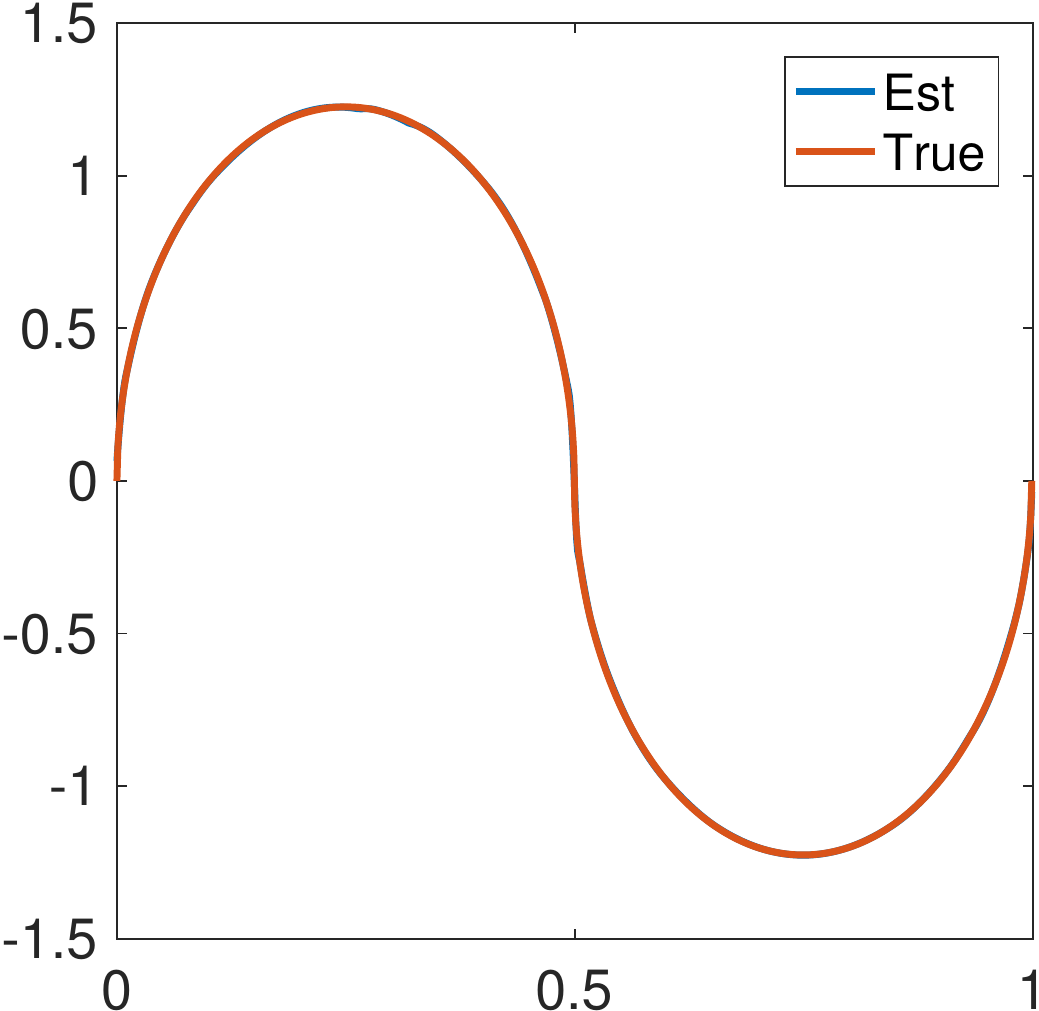}  
      \includegraphics[height=1.3in]{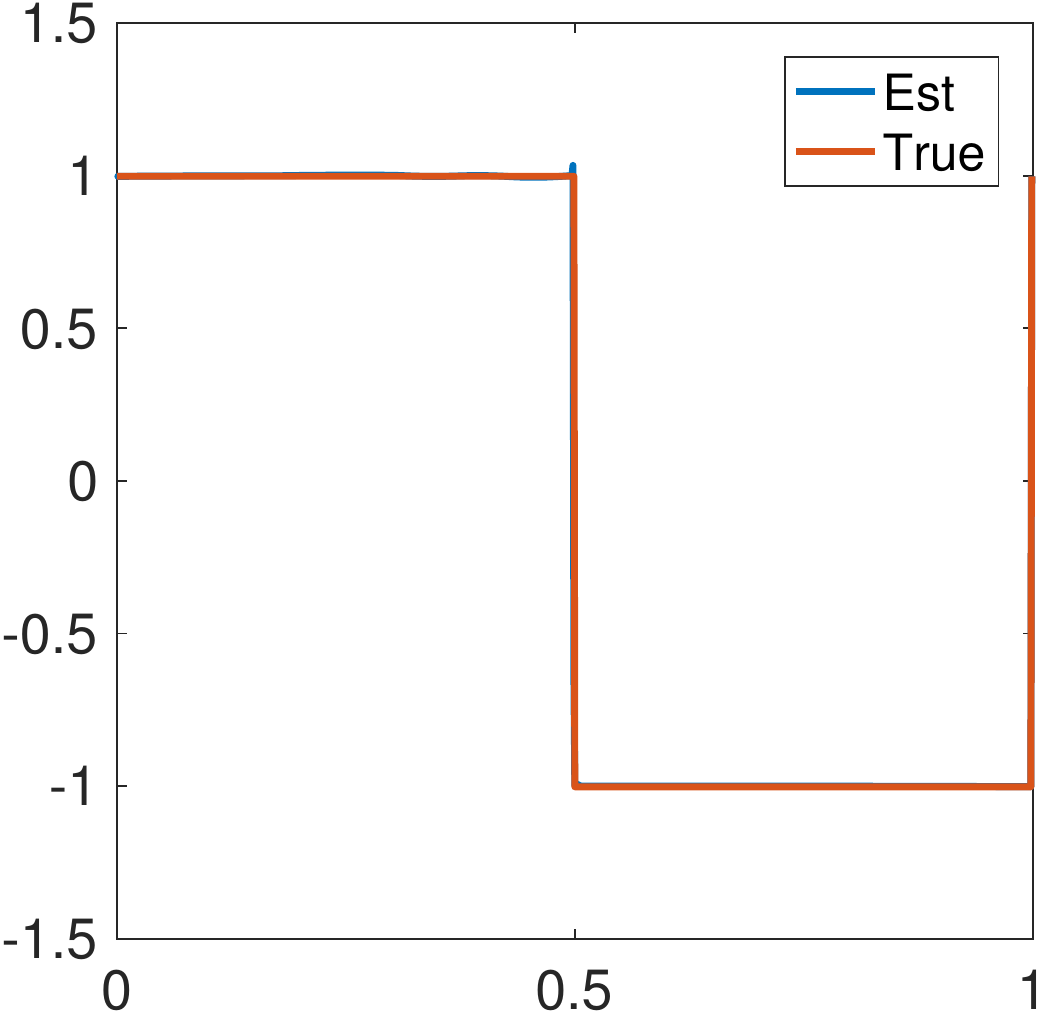}  \\
      \includegraphics[height=1.3in]{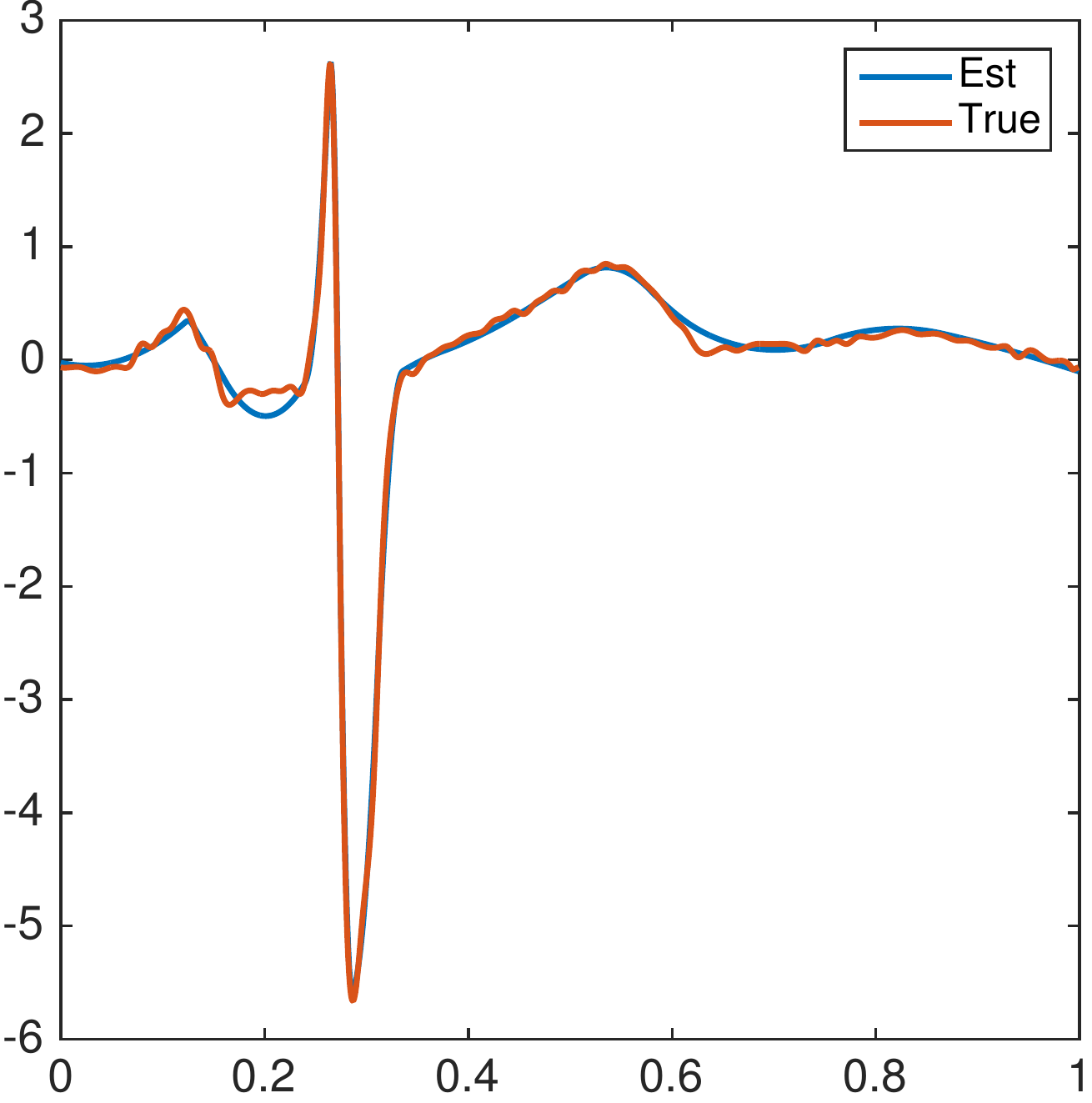}  
   \includegraphics[height=1.3in]{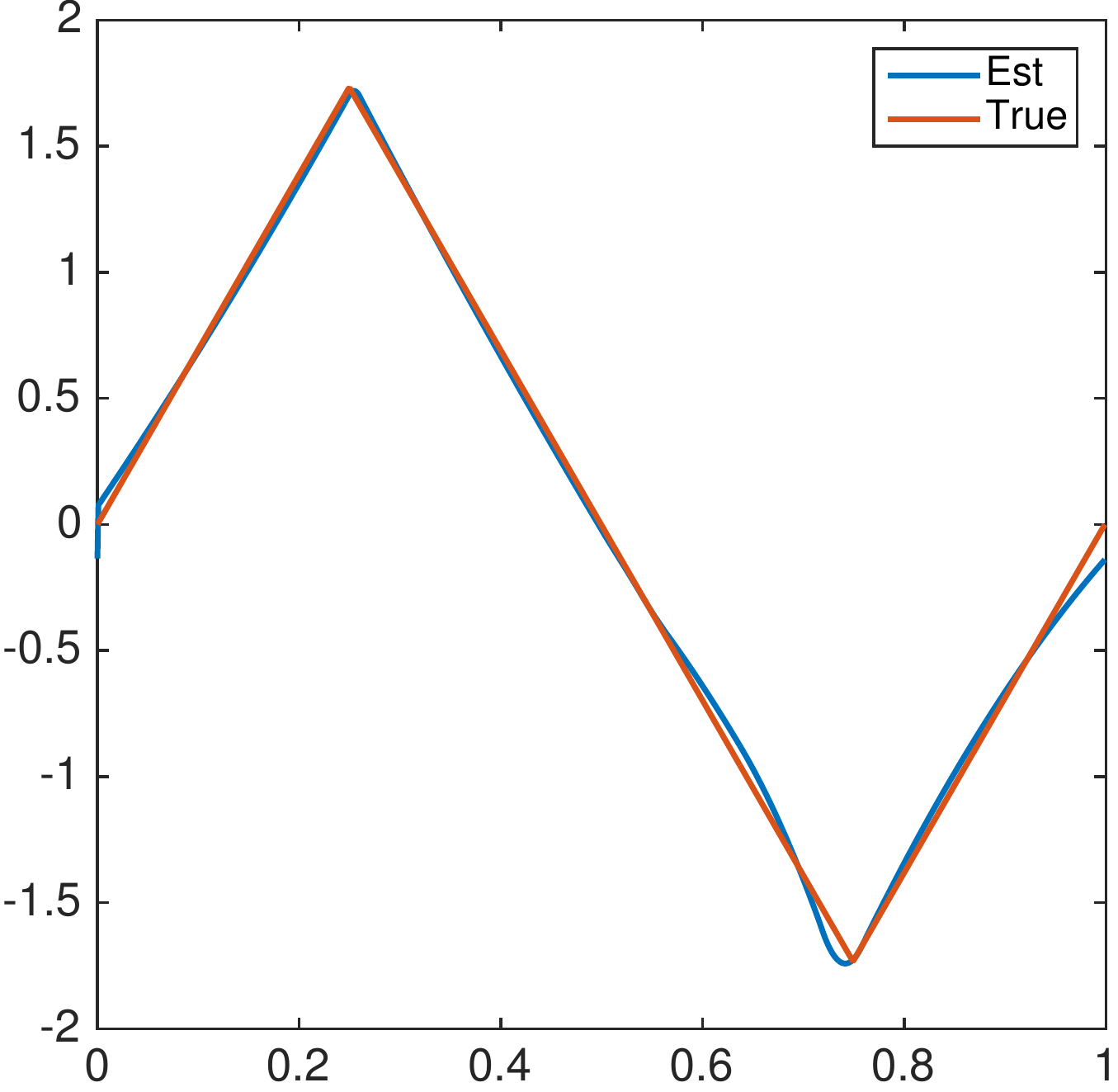}    \hspace{1cm}
   \includegraphics[height=1.3in]{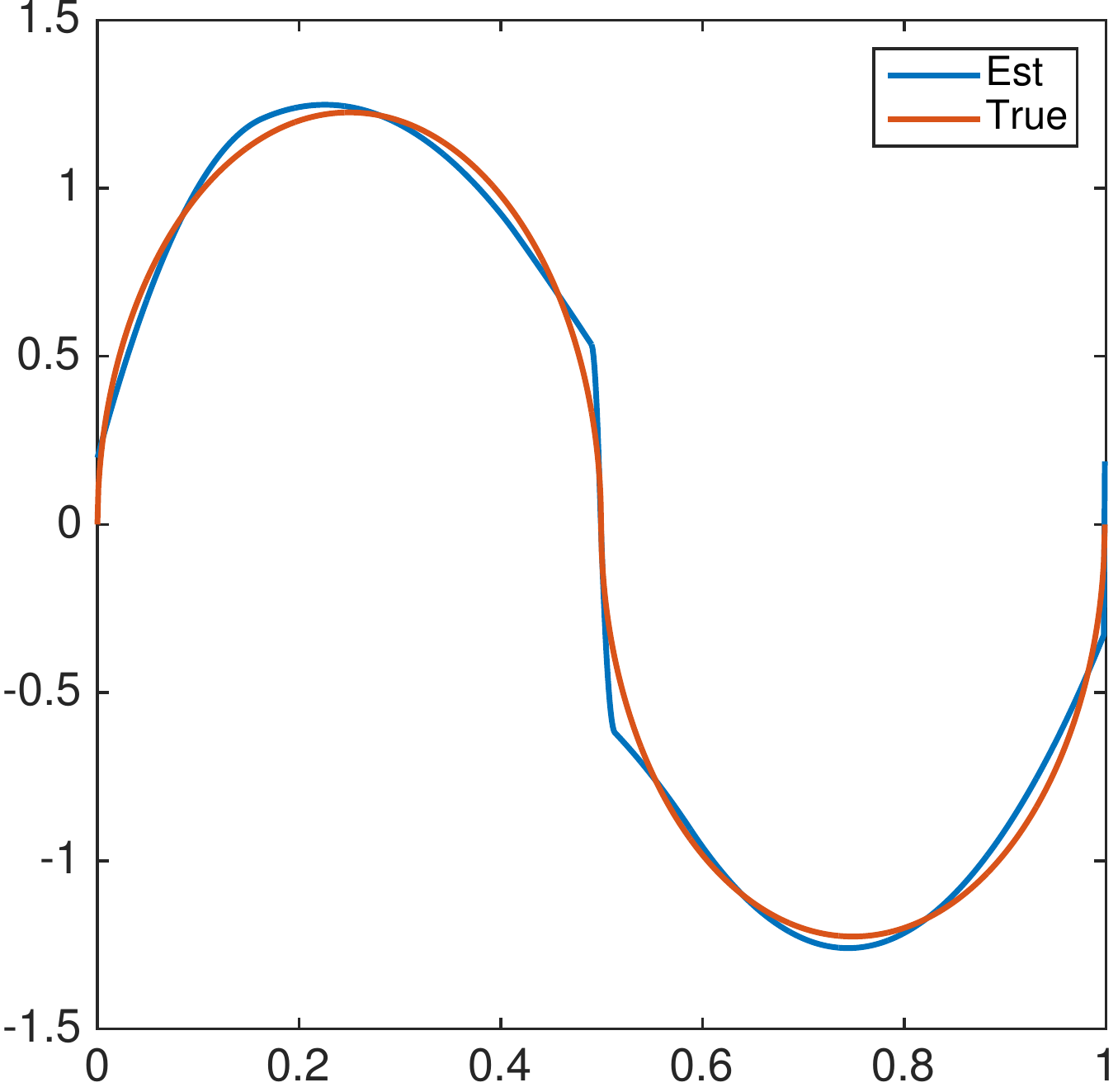}  
      \includegraphics[height=1.3in]{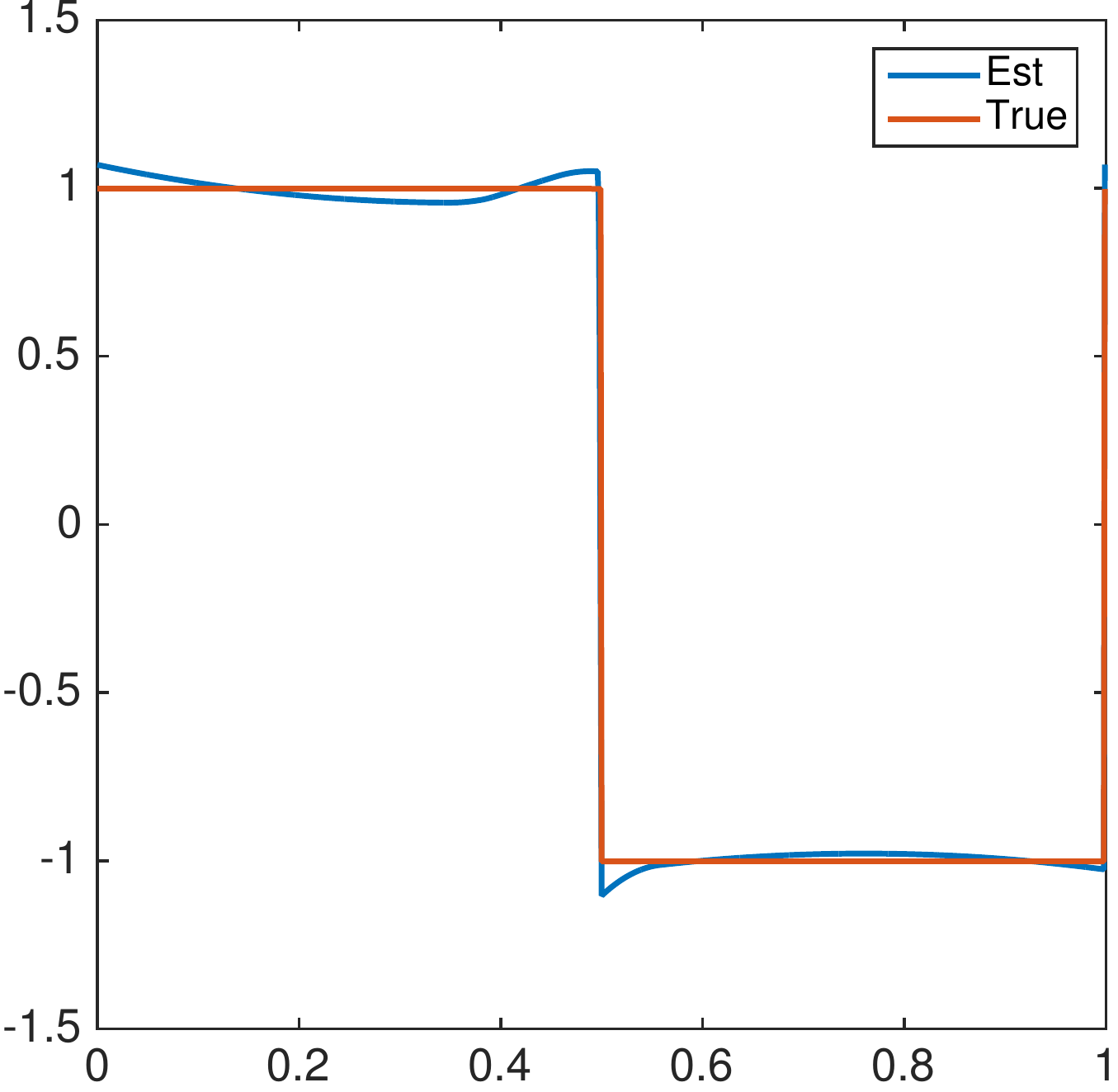}  
    \end{tabular}
  \end{center}
  \caption{From left to right, the ground truth shape functions $s_k(2\pi t)$ for $k=1,\dots,4$ (in red) and their estimations by the RDBR (in blue). From top to buttom, the results recovered from a clean signal and a noisy signal with $\SNR=-3\text{ dB}$.}
\label{fig:13}
\end{figure}

\subsection{Practical applications}

In this section, we provide two examples to demonstrate the capability of the RDBR in practical applications when instantaneous properties are not known. The synchrosqueezed transform as implemented in \cite{1DSSWPT} is applied to estimate these properties as inputs of the RDBR. These inputs may contain systematic error due the synchrosqueezed transform, but the RDBR is still able to estimate the shape functions precisely. The first example is a generalized mode decomposition similar to the example of Figure \ref{fig:11} that used ECG shape functions. To make the problem more challenging, instantaneous frequencies are much smaller and more similar in this example. Let us define
\begin{eqnarray*}
f(t)=\alpha_1(t)s_1(2\pi N_1\phi_1(t))+\alpha_2(t)s_2(2\pi N_2\phi_2(t)),
\end{eqnarray*}
where $\alpha_1(t) = 1+0.05\sin(2\pi t)$, $\alpha_2(t) = 1+0.05\cos(2\pi t)$, $N_1=32$, $N_2=48$, $\phi_1(t) = t+0.001\sin(2\pi t)$, and $\phi_2(t)= t+0.001\cos(2\pi t)$. 

A clean signal and a noisy signal with $\SNR =-3\text{ dB}$ were analyzed. The SSWPT in \cite{1DSSWPT} was applied to estimate fundamental instantaneous properties and results are shown in Figure \ref{fig:14_1} and \ref{fig:14_2}. Inputing these properties in the RDBR, we obtained estimated shape functions contained in $f(t)$ as shown in Figure \ref{fig:14}. Note that even though the estimated instantaneous properties have large errors, especially in the noisy case, the RDBR is still able to give reasonably good shape estimation. 

\begin{figure}[ht!]
  \begin{center}
    \begin{tabular}{c}
      \includegraphics[height=1.3in]{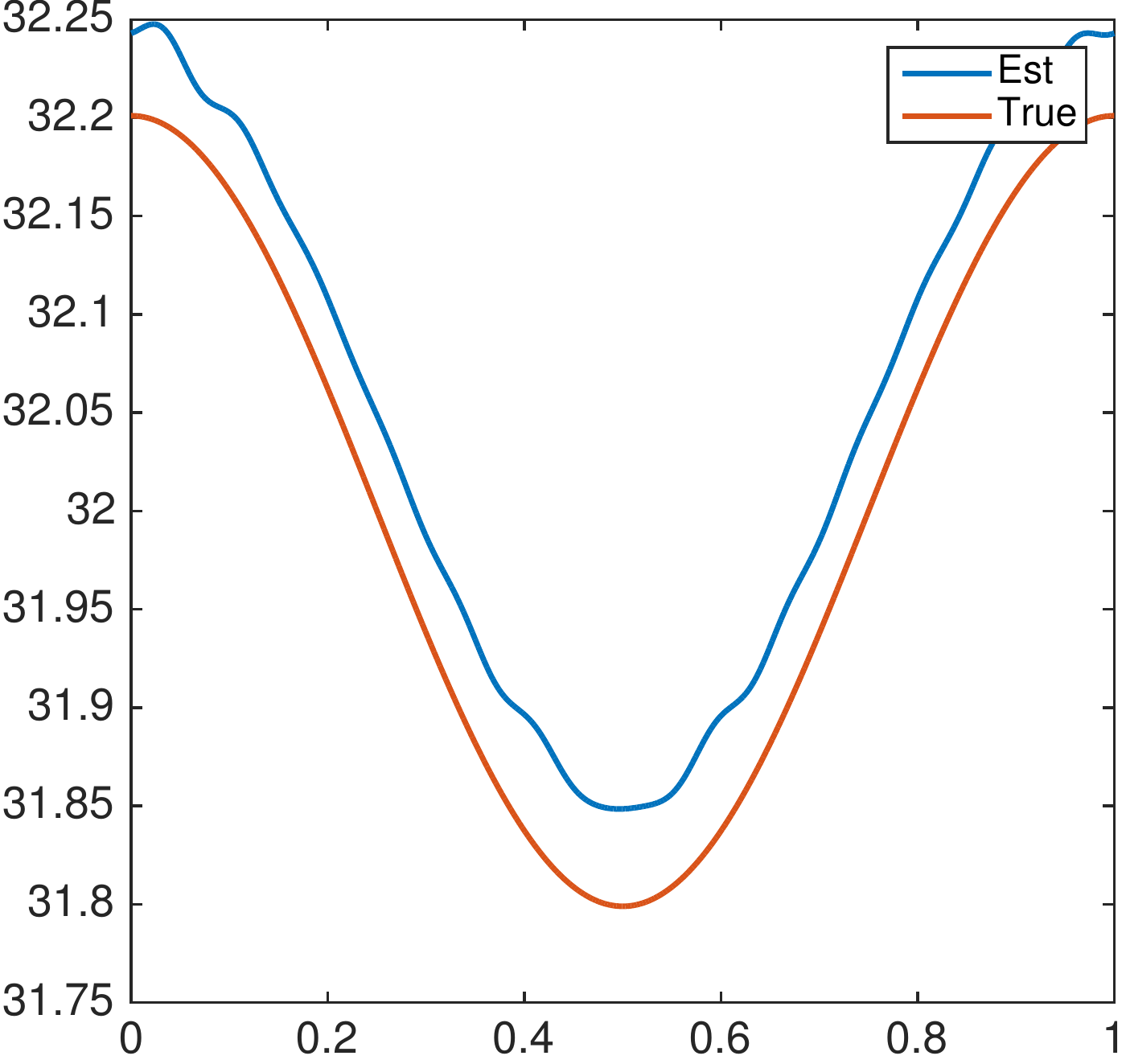}  
   \includegraphics[height=1.3in]{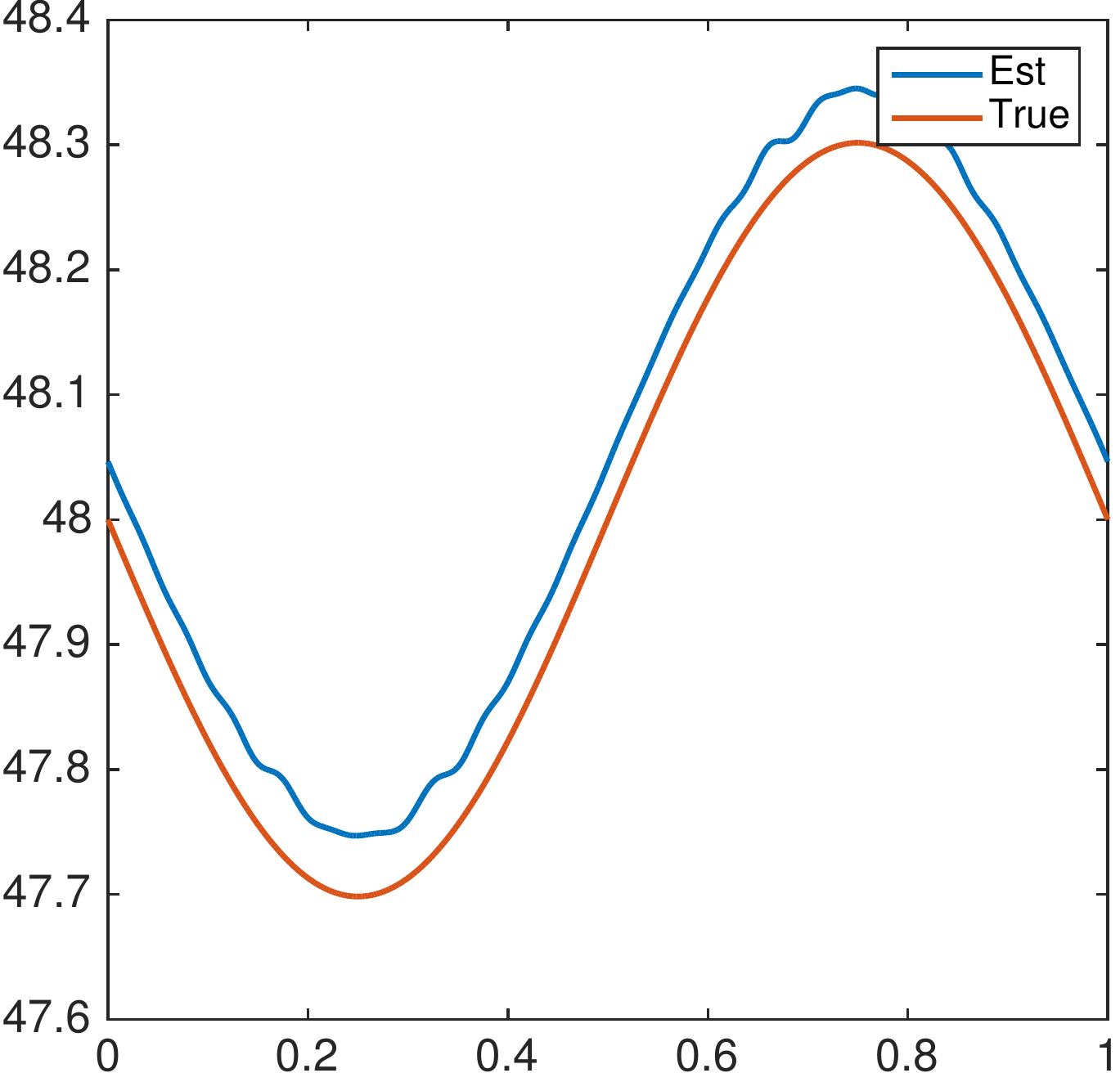}    \hspace{1cm}
   \includegraphics[height=1.3in]{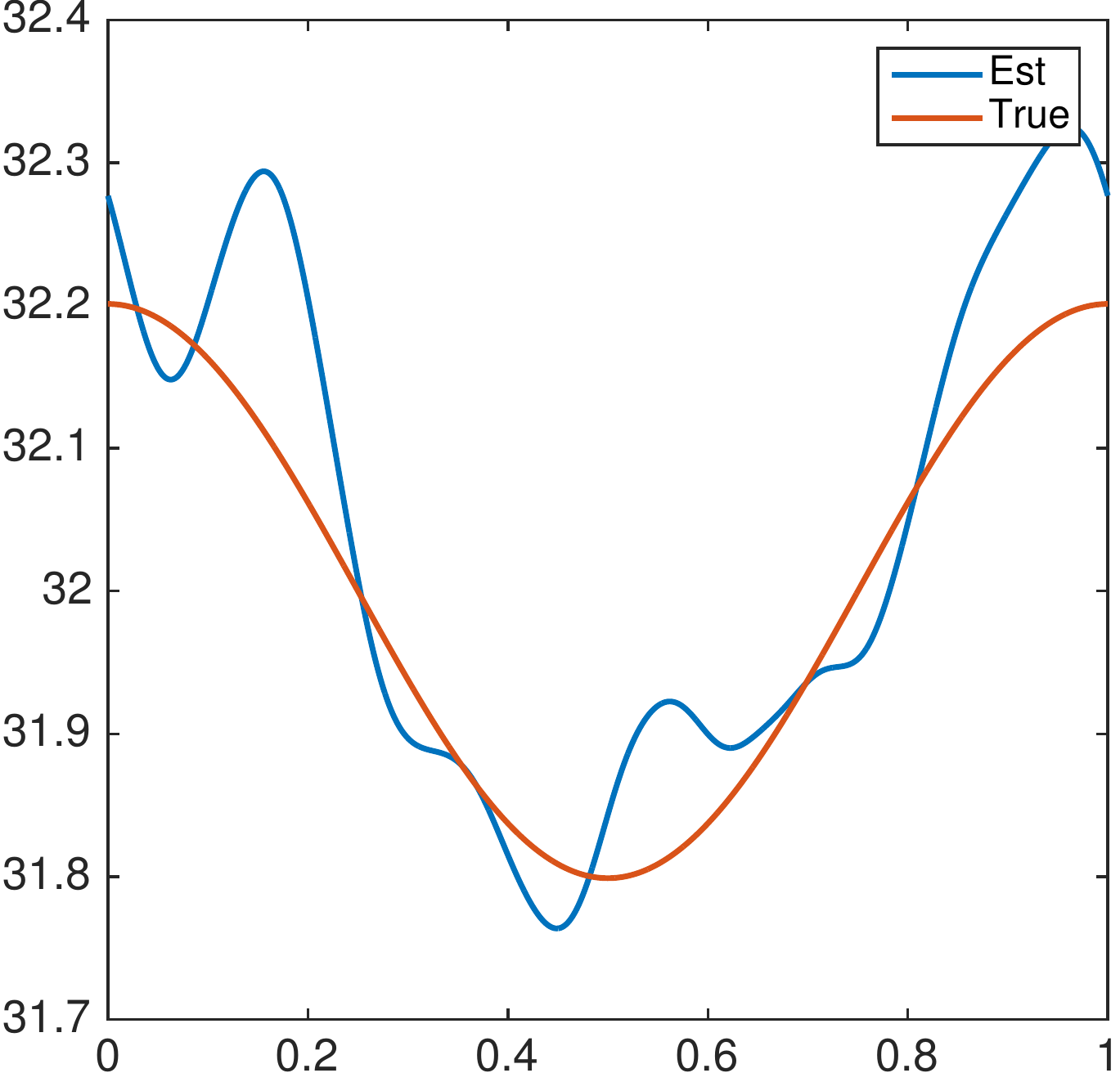}  
      \includegraphics[height=1.3in]{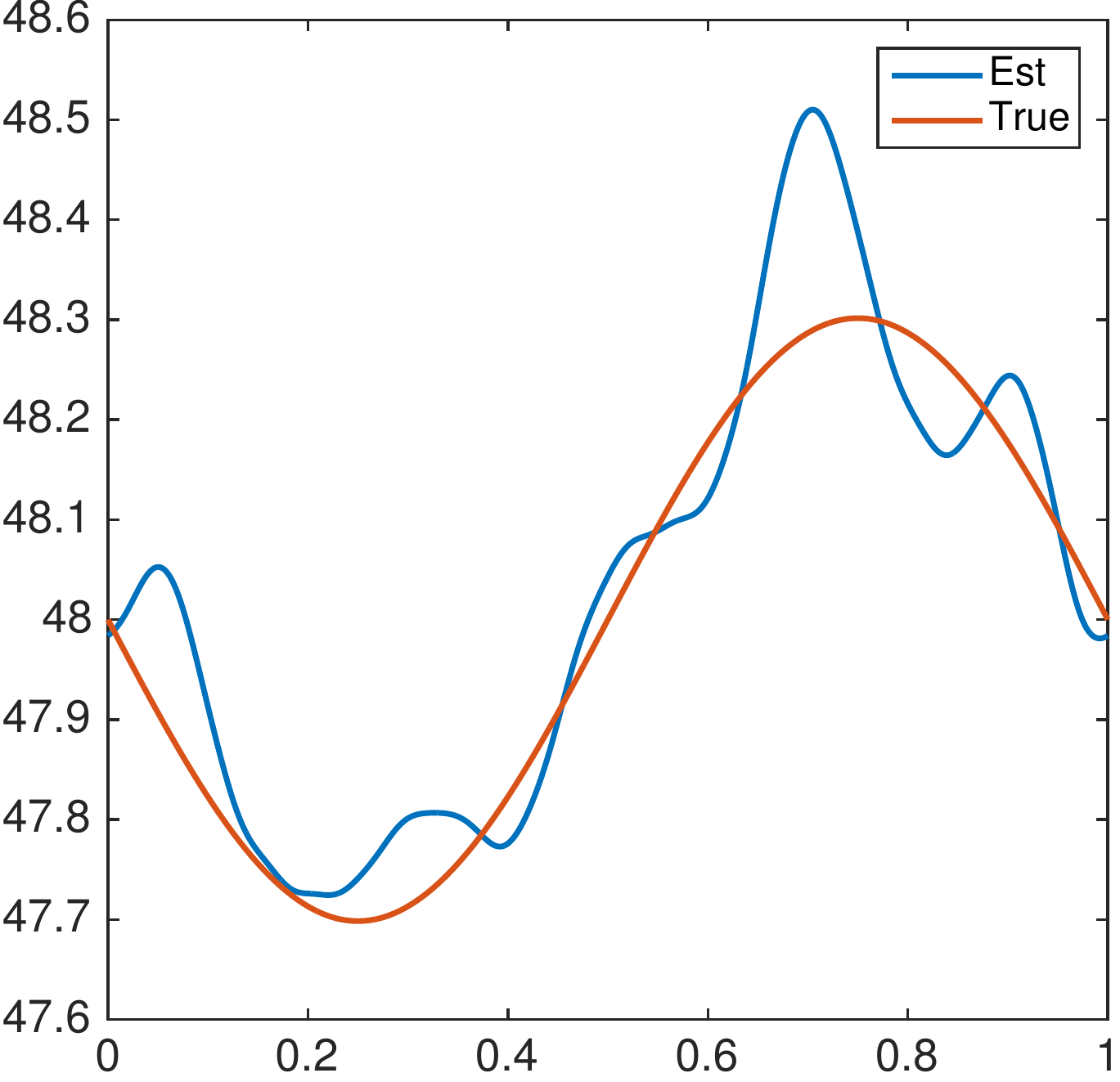}  
    \end{tabular}
  \end{center}
  \caption{Recovered fundamental instantaneous frequencies in clean (left) and noisy (right) examples by the synchrosqueezed transform in \cite{1DSSWPT}, as compared with ground truth shapes.}
\label{fig:14_1}
\end{figure}

\begin{figure}[ht!]
  \begin{center}
    \begin{tabular}{c}
      \includegraphics[height=1.3in]{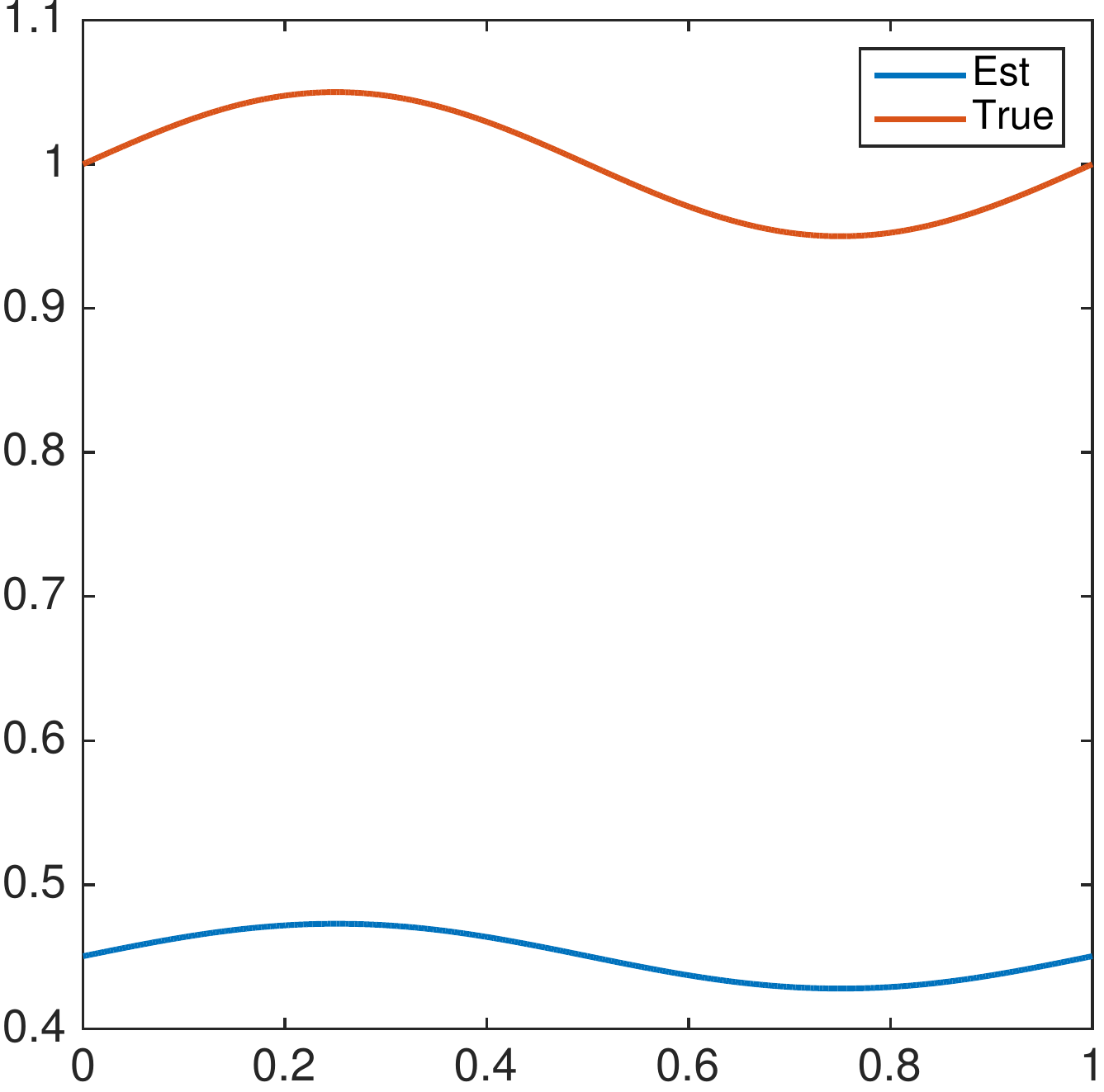}  
   \includegraphics[height=1.3in]{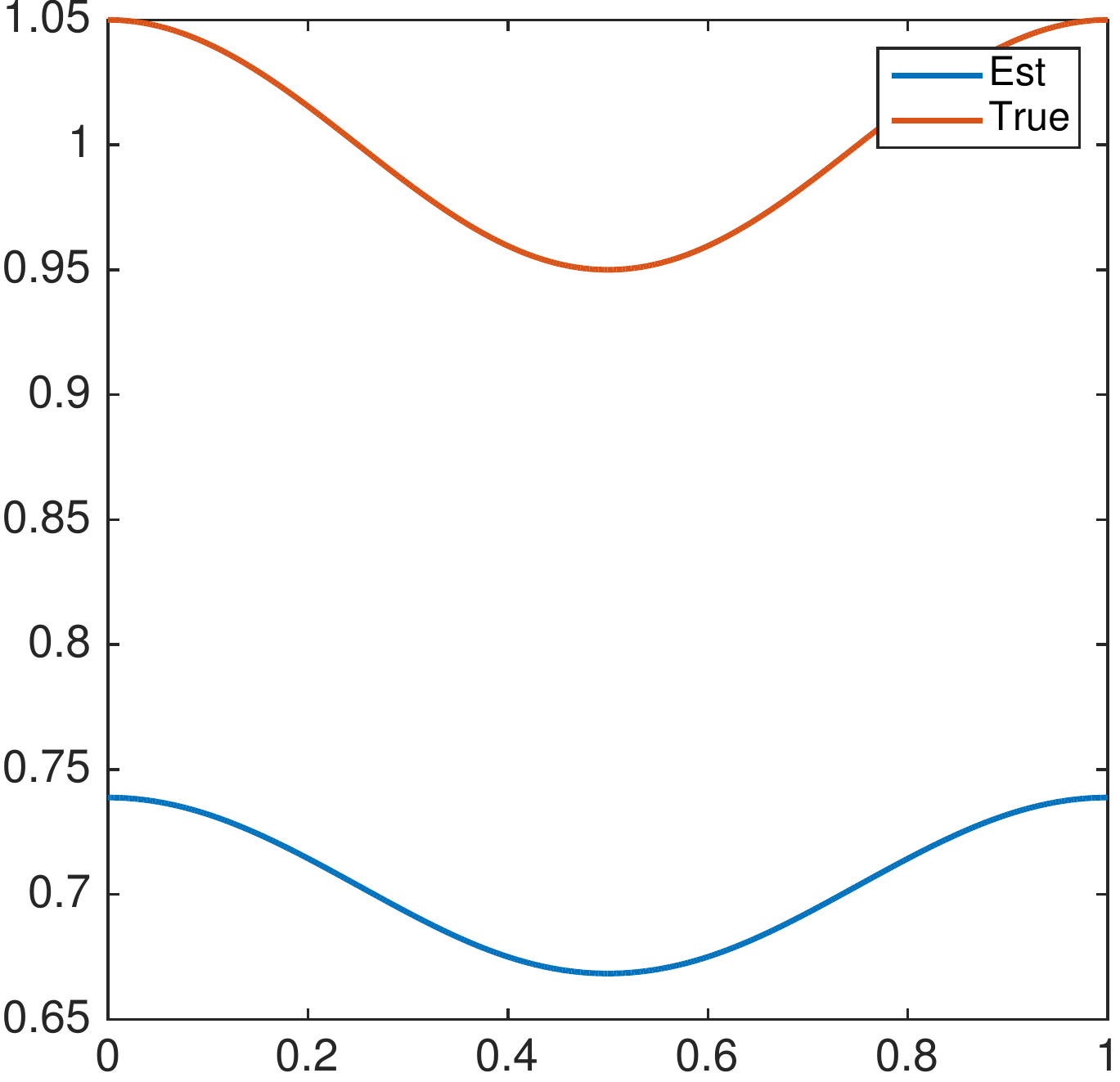}    \hspace{1cm}
   \includegraphics[height=1.3in]{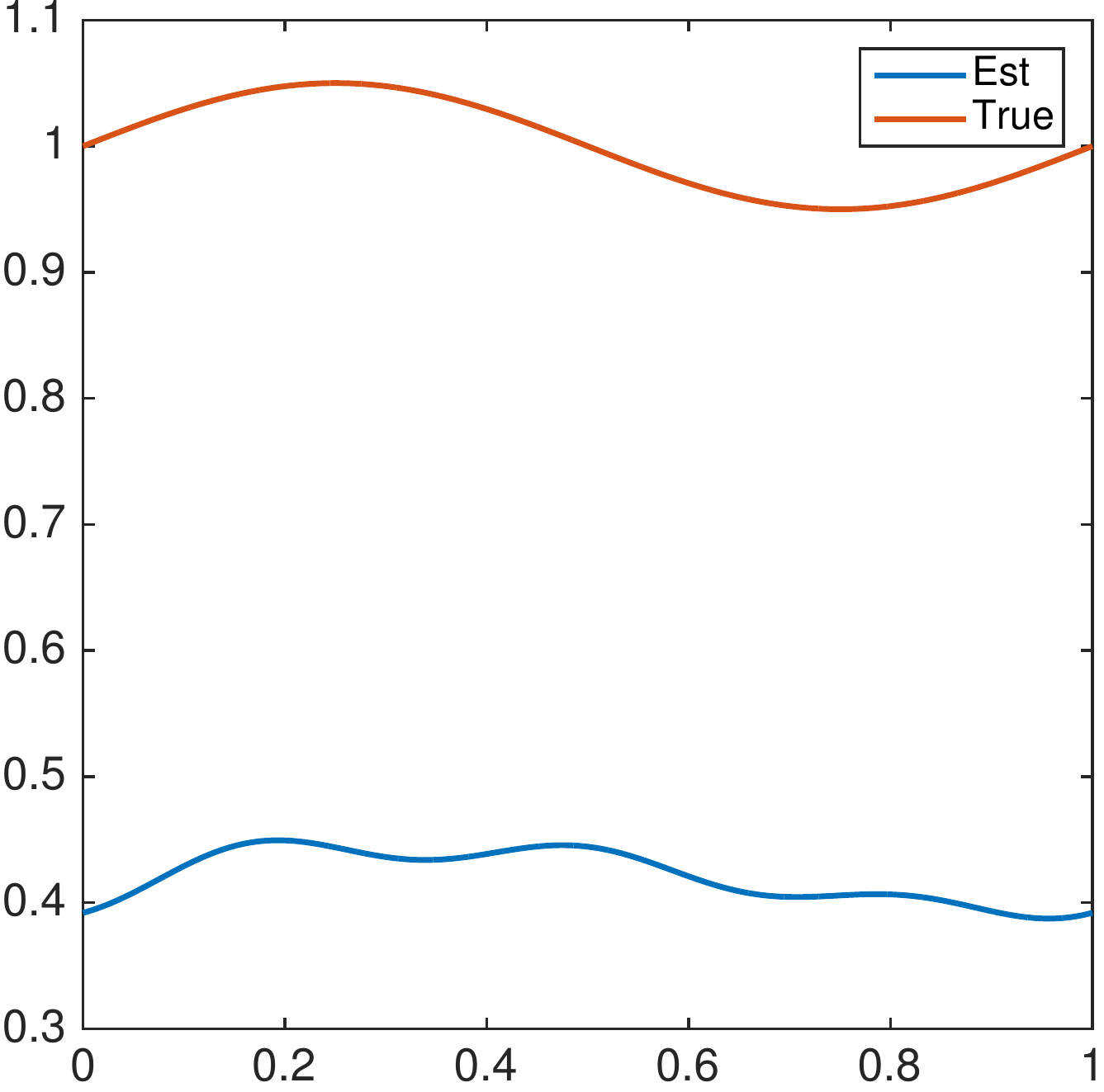}  
      \includegraphics[height=1.3in]{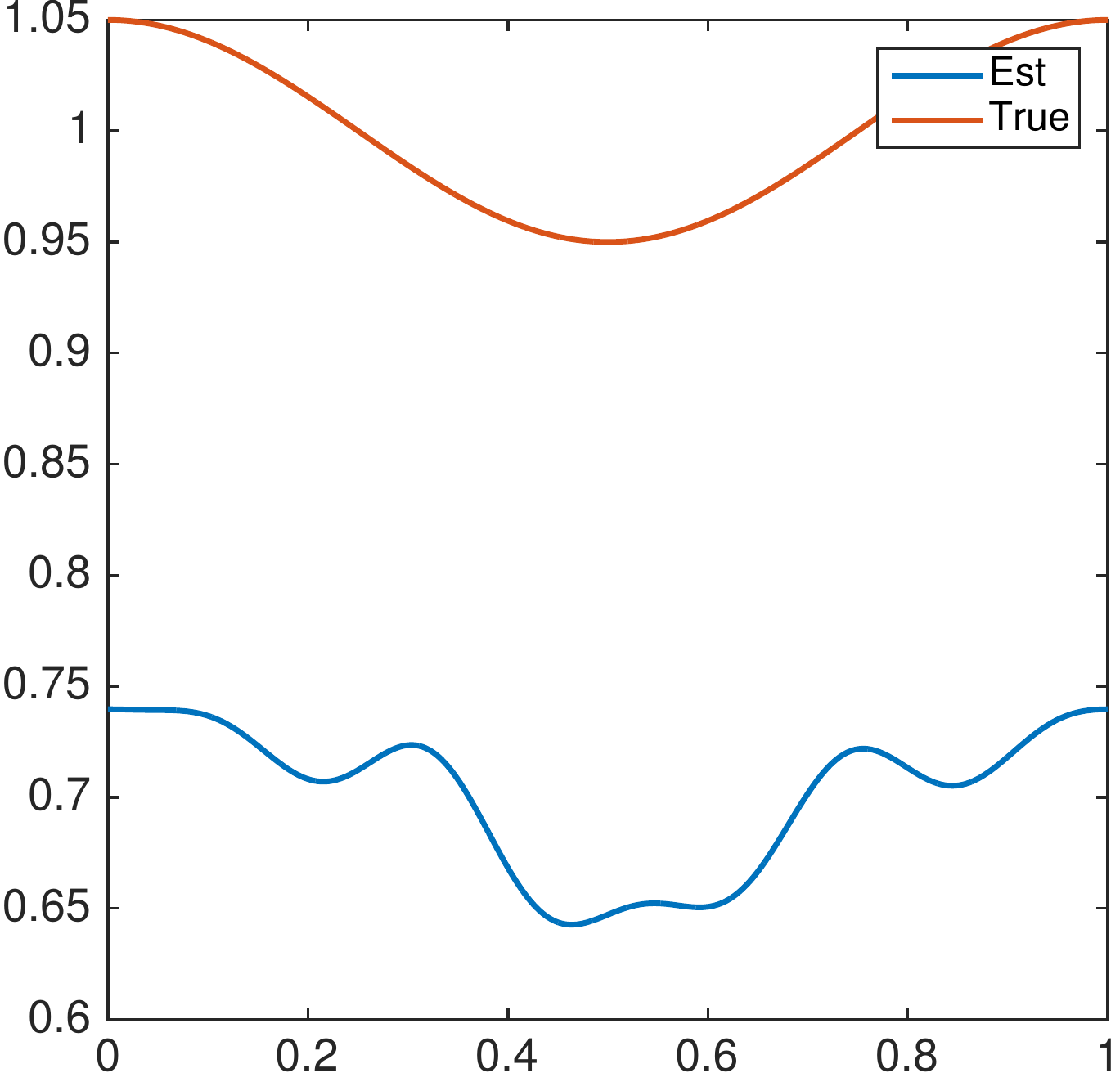}  
    \end{tabular}
  \end{center}
  \caption{Recovered fundamental instantaneous amplitudes (up to an unknown factor) in clean (left) and noisy (right) examples by the synchrosqueezed transform in \cite{1DSSWPT}, as compared with ground truth shapes.}
\label{fig:14_2}
\end{figure}

\begin{figure}[ht!]
  \begin{center}
    \begin{tabular}{c}
      \includegraphics[height=1.3in]{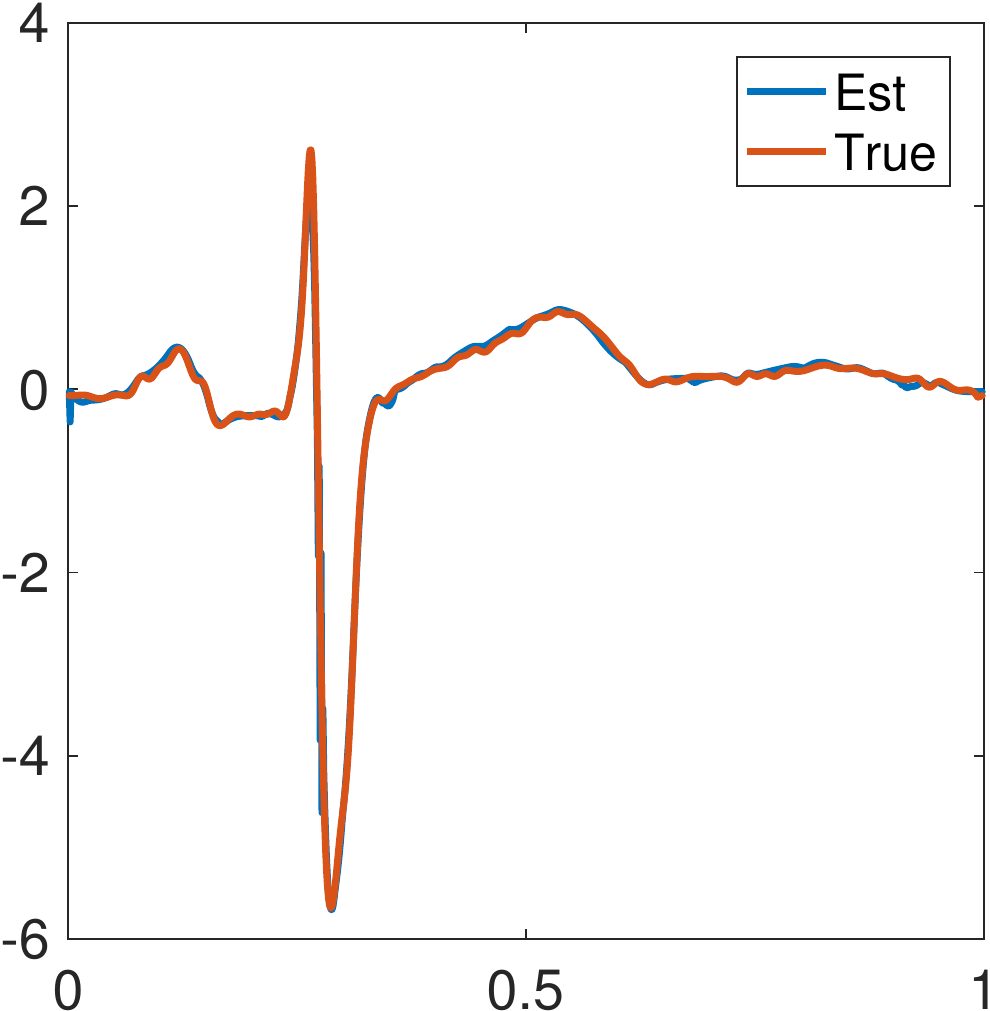}  
   \includegraphics[height=1.3in]{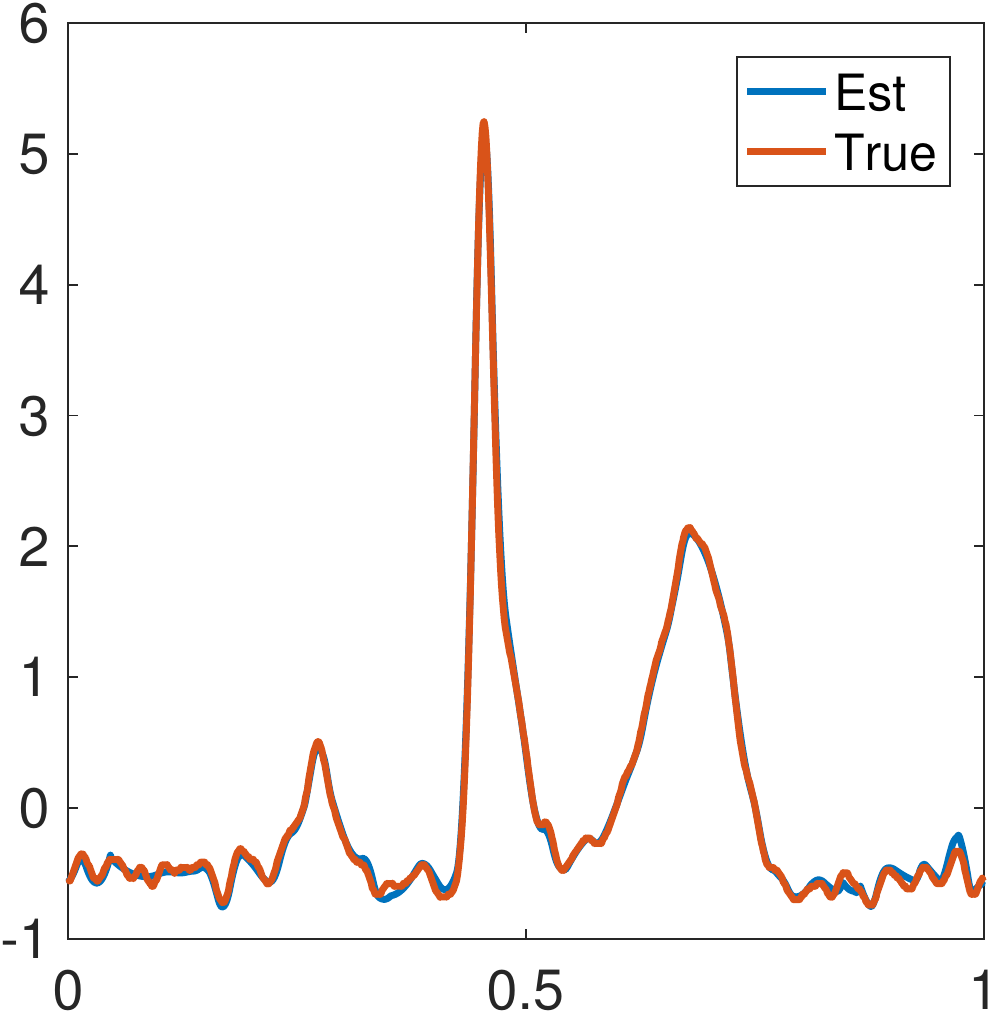}    \hspace{1cm}
   \includegraphics[height=1.3in]{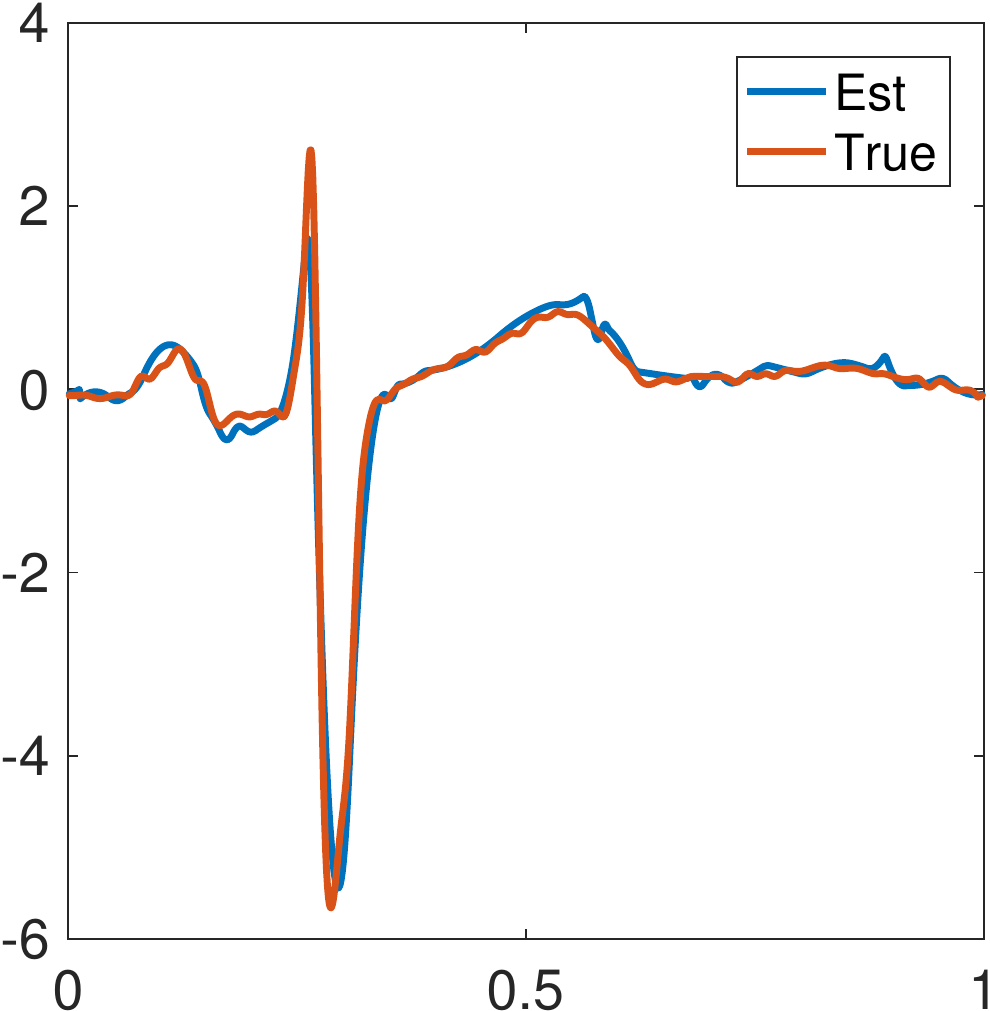}  
      \includegraphics[height=1.3in]{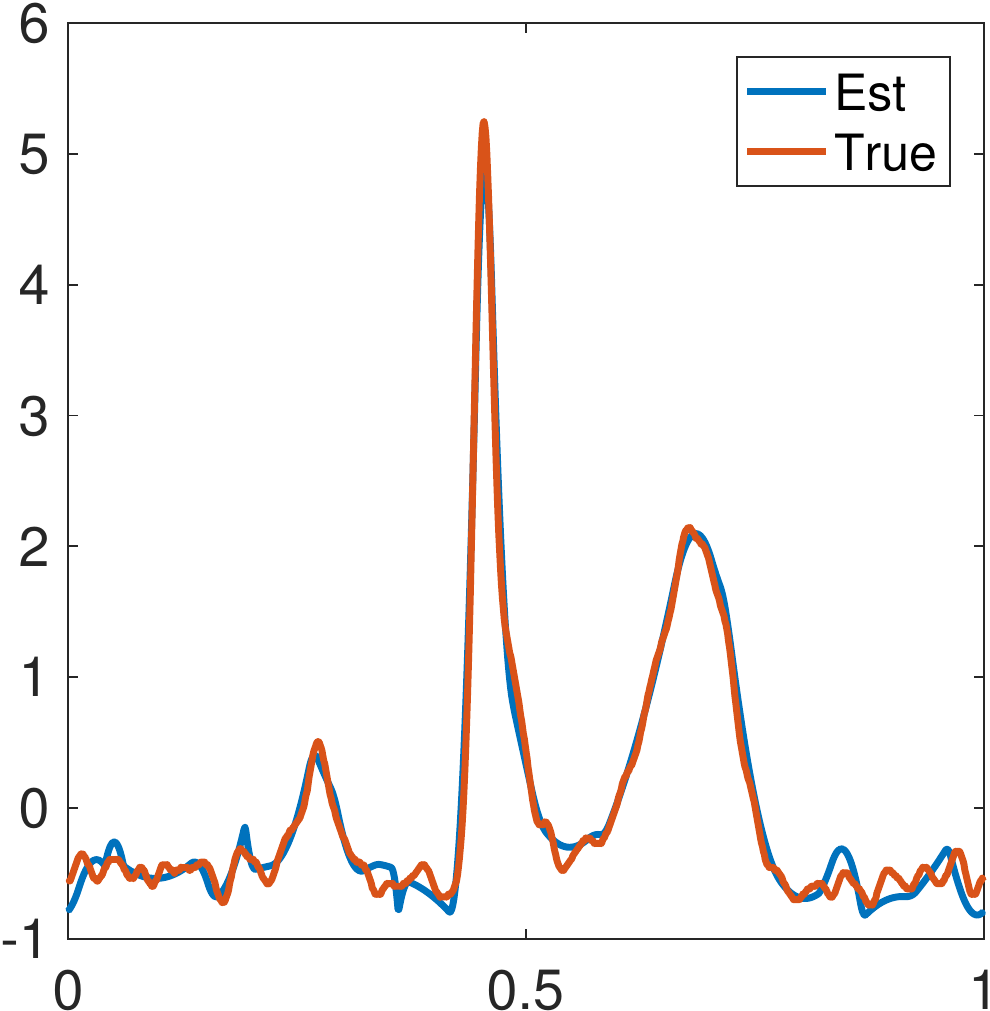}  
    \end{tabular}
  \end{center}
  \caption{Recovered shapes in clean (left) and noisy (right) examples, as compared with ground truth shapes.}
\label{fig:14}
\end{figure}

In the last example, we apply the RDBR to analyze daily atmospheric CO$_2$ concentration data in \cite{PhysicalAnal}. The data were observed by National Oceanic and Atmospheric Administration at Mauna Loa (MLO) in recent 31 years (1981-2011). As shown in Figure \ref{fig:15_1} (top), there is a smooth growing trend in the original data. To focus on the oscillatory pattern, this trend is approximated by a linear function and removed from the original data. The SSWPT is applied to estimate fundamental instantaneous properties of the residual data. As shown in Figure \ref{fig:15} (left), the synchrosqueezed energy distribution indicates only one fundamental component. The semiannual component has a instantaneous frequency that is nearly twice of the one of the annual component. The RDBR is applied to the residual data with the estimated fundamental properties by the SSWPT. Figure \ref{fig:15} (right) shows the estimated shape function contained in the residual data. This shape function reflects a nonlinear evolution pattern in a year: the CO$_2$ concentration usually increases in a longer period and decreases in a shorter period. As explained in \cite{PhysicalAnal}, this special pattern comes from seasonal photosynthetic drawdown and respiratory release of CO2 by terrestrial ecosystems.

\begin{figure}[ht!]
  \begin{center}
    \begin{tabular}{c}
      \includegraphics[height=1.6in]{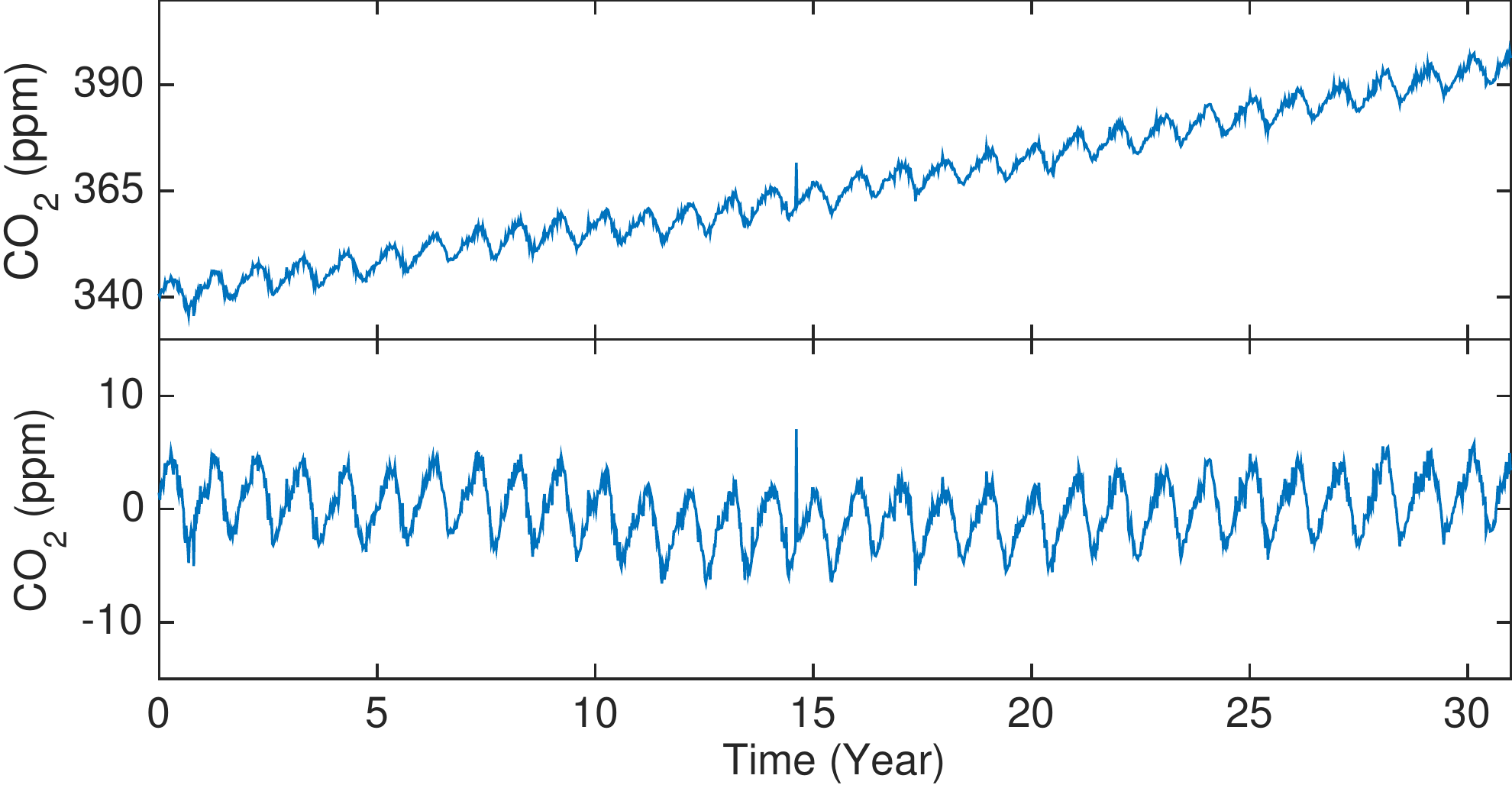} 
    \end{tabular}
  \end{center}
  \caption{Top: original CO$_2$ concentration data. Bottom: the residual CO$_2$ concentration data after removing a smooth trend.}
\label{fig:15_1}
\end{figure}

\begin{figure}[ht!]
  \begin{center}
    \begin{tabular}{cc}
      \includegraphics[height=1.6in]{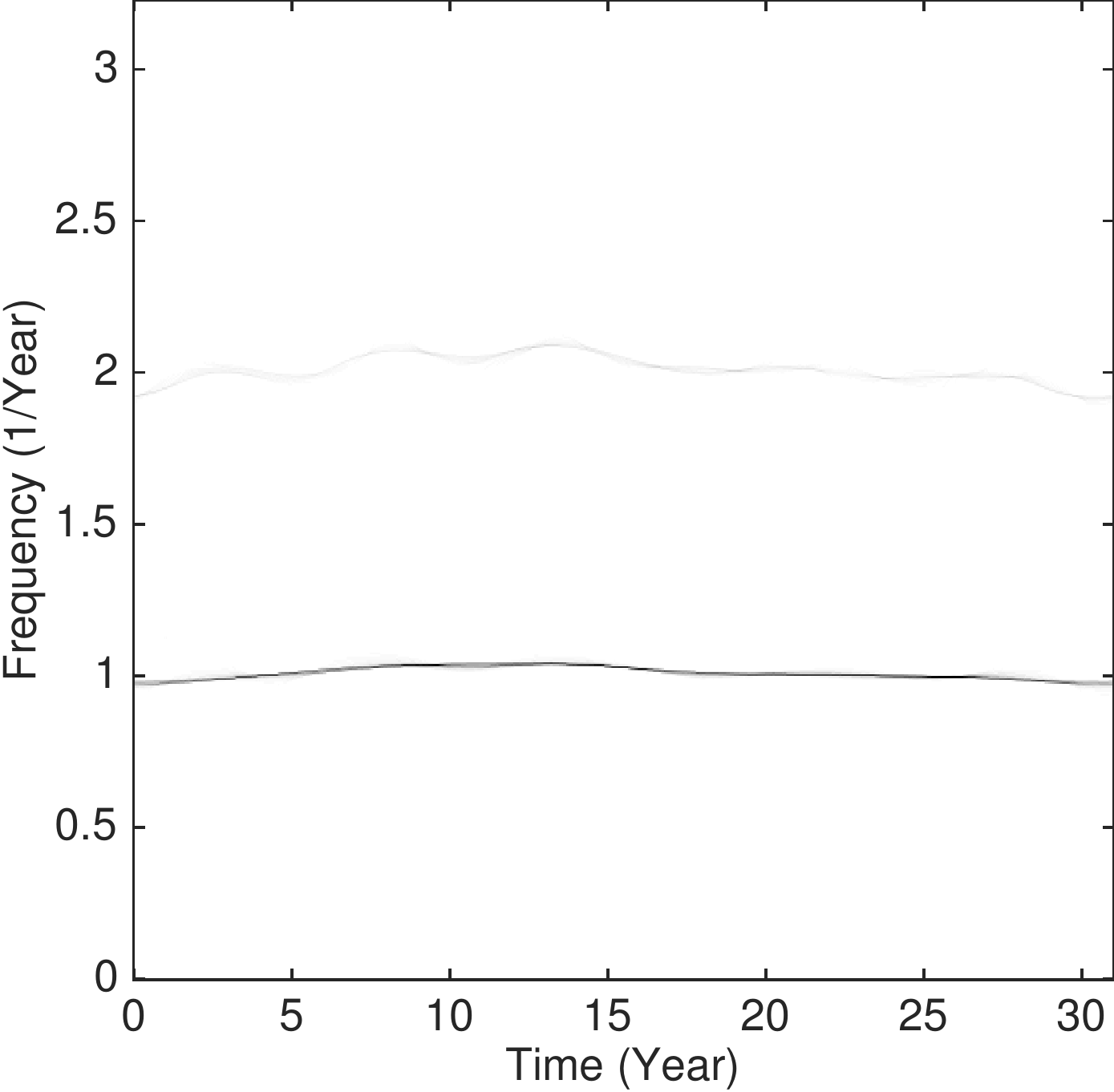}  &
   \includegraphics[height=1.6in]{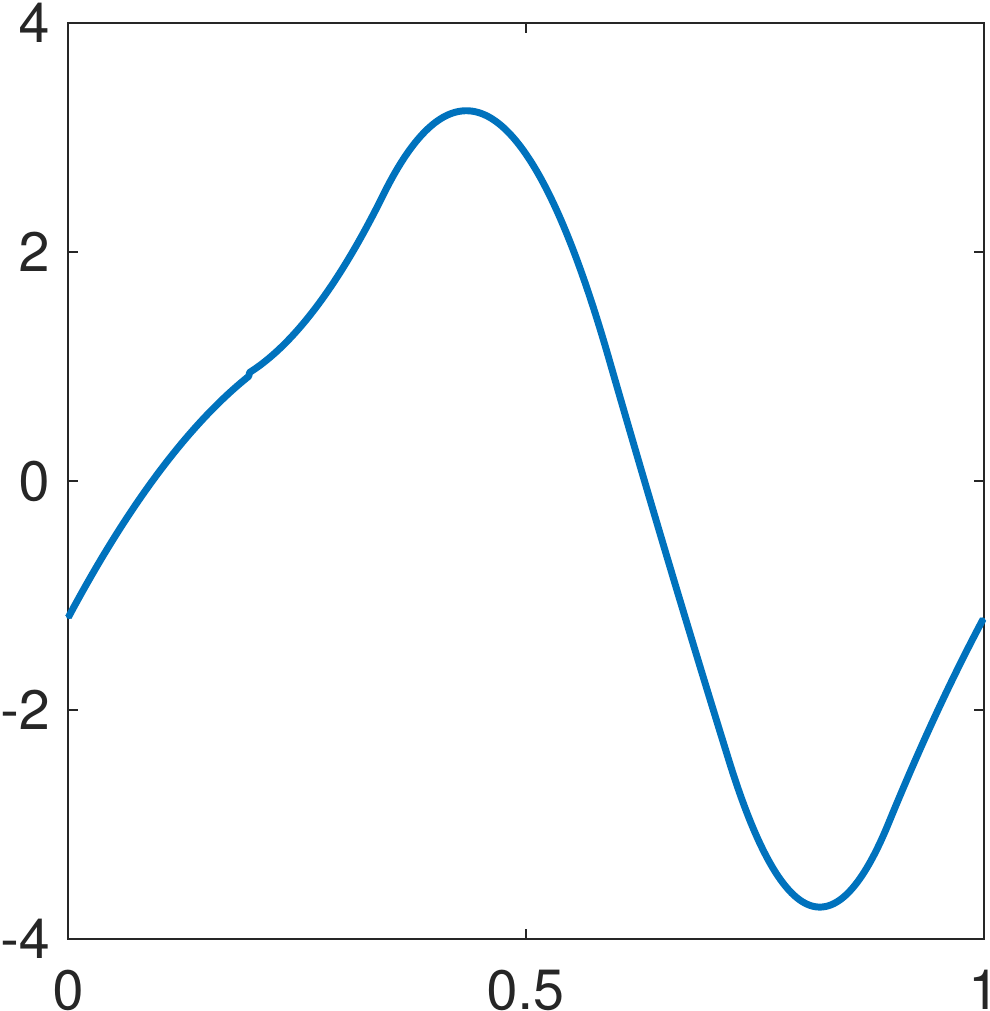} 
    \end{tabular}
  \end{center}
  \caption{Left: the synchrosqueezed energy distribution of the CO$_2$ concentration data in the low frequency part. Right: estimated shape function by the RDBR.}
\label{fig:15}
\end{figure}

\section{Conclusion}
\label{sec:conclusion}

This paper introduced a recursive diffeomorphism-based regression method (RDBR) for estimating shape functions from a superposition of generalized intrinsic mode type functions (GIMT). Combining the RDBR with other methods for estimating instantaneous properties of GIMT's, namely, synchrosqueezed transforms \cite{Daubechies2011}, adaptive optimization \cite{VMD,Hou2012}, recursive filtering \cite{iterativeFilter2,iterativeFilter1}, we provide an alternative solution to the generalized mode decomposition problem. As we have shown theoretically and numerically, once the instantaneous properties are accurate, the RDBR is a precise and robust method to estimate shape function, as long as instantaneous phases of these oscillatory modes are well-differentiated. The convergence of the RDBR is linear if instantaneous frequencies are sufficiently large. Numerical observation suggests that the RDBR converges sublinearly if instantaneous frequencies are small.

{\bf Acknowledgments.} 
H.Y. thanks the support of the AMS-Simons Travel Award and the National Science Foundation under grants ACI-1450280, and the startup grant from the Department of Mathematrics at the National University of Singapore.

\bibliographystyle{unsrt} 
\bibliography{ref}

\end{document}